\def\rmd{\mathrm{d}}
\def\define{:=}
\def\R{\mathbb{R}}
\providecommand{\abs}[1]{\left\lvert#1\right\rvert}
\providecommand{\norm}[1]{\left\lVert#1\right\rVert}
\providecommand{\set}[1]{\left\{#1\right\}}
\def\st{\ :\ }
\DeclareMathOperator{\print}{print}
\DeclareMathOperator{\diameter}{diam}
\DeclareMathOperator*{\esssup}{ess\;sup}
\DeclareMathOperator{\divergence}{div}
\DeclareMathOperator{\loc}{loc}
\DeclareMathOperator{\support}{supp}
\DeclareMathOperator{\closure}{cl}
\DeclareMathOperator{\curl}{curl}
\newtheorem{definition}{Definition}[section]
\newtheorem{lemma}[definition]{Lemma}
\newtheorem{theorem}[definition]{Theorem}
\newtheorem{proposition}[definition]{Proposition}
\newtheorem{corollary}[definition]{Corollary}
\newtheorem{rem}[definition]{Remark}
\title{On solutions of the transport equation in the presence of singularities.}
\author{Evelyne Miot\thanks{CNRS and Institut Fourier, Universit\'e Grenoble Alpes, France} \ and Nicholas Sharples\thanks{Middlesex University, UK}}
\begin{document}

\maketitle

\begin{abstract}
We consider the transport equation on $[0,T]\times \R^n$ in the situation where the vector field is $BV$ off a set $S\subset [0,T]\times \R^n$. We demonstrate that solutions exist and are unique provided that the set of singularities has a sufficiently small anisotropic fractal dimension and the normal component of the vector field is sufficiently integrable near the singularities. This result improves upon recent results of Ambrosio who requires the vector field to be of bounded variation everywhere.

In addition, we demonstrate that under these conditions almost every trajectory of the associated regular Lagrangian flow does not intersect the set $S$ of singularities.

Finally, we consider the particular case of an initial set of singularities that evolve in time so the singularities consists of curves in the phase space, which is typical in applications such as vortex dynamics. We demonstrate that solutions of the transport equation exist and are unique provided that the box-counting dimension of the singularities is bounded in terms of the H\"older exponent of the curves.
\end{abstract}

\section{Introduction}
In this note we are concerned with the existence and uniqueness of solutions to the transport equation
\begin{align}\label{transport equation}\tag{TE}
\left\lbrace
\begin{aligned}
\partial_{t}u+b\cdot \nabla u&=0 &\text{on}\;\left(0,T\right)\times\R^{n}\\
u\left(0,\cdot\right)&=u_{0}\left(\cdot\right)
\end{aligned}
\right.
\end{align}
for some $T>0$, when the non-autonomous vector field $b\colon\left[0,T\right]\times \R^{n}\rightarrow \R^{n}$ has limited regularity. Classically, the existence of unique smooth solutions to \eqref{transport equation} is assured if the vector field $b$ and the initial data $u_{0}$ are both smooth. However, in many applications such as Fluid Dynamics or Control Theory the smoothness or even continuity of vector fields cannot be guaranteed.

When considering less regular vector fields $b$, minimally requiring that both $b$ and its (spatial) divergence are locally integrable, we say that a bounded map $u$ is a weak solution of the transport equation if \eqref{transport equation} holds distributionally:
\begin{definition}
A map $u\in L^{\infty}_{\loc}\left(\left(0,T\right)\times \R^{n}\right)$ is a weak solution of \eqref{transport equation} with initial data $u_{0}\in L^{\infty}_{\loc}\left(\R^{n}\right)$ if
\begin{align}\label{weak transport equation}
\int_{\R^{n}} u_{0}\left(x\right)\phi\left(0,x\right) \rmd x + \int_{0}^{T}\int_{\R^{n}} u\cdot \left(\frac{\partial \phi}{\partial t} + b\cdot\nabla \phi + \divergence b\ \phi\right) \;\rmd x \;\rmd t = 0
\end{align}
for all test maps $\phi\in C^{\infty}_{c}\left(\left[0,T\right)\times \R^{n}\right)$.
\end{definition}

The transport equation \eqref{transport equation} corresponds with the ordinary differential equation
\begin{align}\label{ODE}\tag{ODE}
\frac{\rmd \xi}{\rmd t}&=b\left(t,\xi\right) && \xi\left(0\right)=x.
\end{align}
Classically, solutions of \eqref{transport equation} are obtained via the `method of characteristics' where the initial data $u_{0}$ is evolved along the flow solution of \eqref{ODE}.

In the less regular setting first considered by DiPerna \& Lions \cite{DiPernaLions89a} this correspondence reverses: solutions of \eqref{ODE} are obtained from solutions of \eqref{transport equation}. Here the appropriately weakened notion of a flow solution is that of a regular Lagrangian Flow (see  DiPerna \& Lions \cite{DiPernaLions89a}, Ambrosio \cite{Ambrosio04} and Crippa \& De Lellis \cite{CrippaDeLellis08}).

\begin{definition}
A map $X\colon \left[0,T\right]\times \R^{n}\rightarrow \R^{n}$ is a \emph{regular Lagrangian Flow solution} of \eqref{ODE} if
\begin{itemize}
\item for almost every $x\in\R^{n}$ the trajectory $t\mapsto X\left(t,x\right)$ is absolutely continuous and
\begin{equation}\label{trajectories}
X\left(t,x\right) = x + \int_{0}^{t} b\left(\tau,X\left(\tau,x\right)\right) \rmd \tau,
\end{equation}
and
\item there exists a constant $L>0$ such that for all Borel sets $B\subset \R^{n}$ the image measure
\begin{align}\label{compressibility constant}
\mu_{n}\left(X\left(t,\cdot\right)^{-1}\left(B\right)\right)&\leq L \mu_{n}\left(B\right) && \forall t\in\left[0,T\right]
\end{align}
where $\mu_{n}$ is the $n$-dimensional Lebesgue measure.
\end{itemize}
\end{definition}

As the trajectories are absolutely continuous the integral equality \eqref{trajectories} is equivalent to requiring $X\left(0,x\right)=x$ and $\frac{\rmd X}{\rmd t}=b\left(t,X\left(t,x\right)\right)$ for almost every $t\in\left[0,T\right]$. Further, the condition \eqref{compressibility constant} ensures that sets with positive measure do not evolve into sets with zero measure. From this fact it follows that $X$ is also a regular Lagrangian Flow solution of \eqref{ODE} for all vector fields $\tilde{b}$ that are equal to $b$ almost everywhere.

Solutions of \eqref{ODE} are obtained from solutions of \eqref{transport equation} via a `reverse method of characteristics': the existence, uniqueness and stability of regular Lagrangian flows follows from the existence, uniqueness and stability of weak solutions to \eqref{transport equation}. We refer to DiPerna \& Lions \cite{DiPernaLions89a} for the original proofs, Ambrosio \cite{Ambrosio04, Ambrosio08} for a more general approach, or Crippa \cite{Crippa08} or De Lellis \cite{DeLellis08a} for a more direct treatment in the case when $b$ is bounded.

Crippa \& De Lellis \cite{CrippaDeLellis08} provide an alternative approach to establishing existence and uniqueness of regular Lagrangian flows. The authors use the theory of maximal functions to obtain some new estimates on flows with Sobolev regularity, thereby obtaining uniqueness directly in the ODE framework. However, this approach requires slightly stronger regularity assumptions for the vector field $b$ than the one considered by Ambrosio \cite{Ambrosio04, Ambrosio08}.
We mention that a recent paper by Nguyen \cite{Nguyen} provides well-posedness of the flow associated to vector fields represented as singular integrals of $BV$ functions, by extending the estimates on the flow established by Crippa \& De Lellis \cite{CrippaDeLellis08}. In turn, this implies well-posedness of the corresponding continuity and transport equations.

\medskip

In this note we extend the theory of DiPerna \& Lions \cite{DiPernaLions89a} and Ambrosio \cite{Ambrosio04} to vector fields $b$ that are $BV$ off a set $S\subset \left[0,T\right]\times \R^{n}$. Let $d_{S}\colon \left[0,T\right]\times \R^{n}\rightarrow \R$ be the Euclidean distance function to $S$, that is
\[
d_{S}\left(t,x\right) \define \inf_{\left(s,y\right)\in S} \abs{\left(t,x\right)-\left(s,y\right)},
\]
for all $\varepsilon>0$ we write $\set{d_{S}>\varepsilon} \define \set{\left(t,x\right) \st d_{S}\left(t,x\right) > \varepsilon}$
and define sets with corresponding inequalities similarly.

Our main result is the following:
\begin{theorem}\label{theorem - all results}
Let $S\subset \left[0,T\right]\times\R^{n}$ be compact. If the vector field $b$ satisfies 
\begin{enumerate}[label=$\textup{\roman{*})}$,ref=$\roman{*})$]
\item $b\in L^{1}_{\loc}\left(\left(0,T\right)\times\R^{n}\right)$, \label{b integrable}
\item $\divergence b \in L^{1}\left(0,T;L^{\infty}\left(\R^{n}\right)\right)$, \label{div b bounded}
\item $\frac{b}{1+\abs{x}} \in L^{1}\left(\left(0,T\right)\times\R^{n}\right) + L^{1}\left(0,T;L^{\infty}\left(\R^{n}\right)\right)$, \label{b decay}
\item for all $\Omega \subset \subset S^{c}$, the restriction $b\vert_{\Omega} \in L^{1}\left(0,T;BV_{\loc}\left(\R^{n}\right)\right)$,\label{b renormalized on complement}
\item for some $1\leq p,q\leq \infty$
\begin{align}
b\cdot\nabla d_{S}&\in L^{p}\left(0,T;L^{q}_{\loc}\left(\R^{n}\right)\right)\label{b dot distance}\\
\text{and}\qquad d_{S}^{-1} &\in L^{p^{*}}\left(0,T;L^{q^{*}}_{\loc}\left(\R^{n}\right)\right) \label{distance reciprocal}
\end{align}
where 
$\frac{1}{p}+\frac{1}{p^{*}}=\frac{1}{q}+\frac{1}{q^{*}}=1$, \label{distance condition}
\end{enumerate}
then
\begin{itemize}
\item for all initial data $u_{0}\in L^{\infty}_{\loc}\left(\R^{n}\right)$ there exists a unique weak solution of \eqref{transport equation},
\item there exists a unique regular Lagrangian Flow solution $X$ of \eqref{ODE},
\item the regular Lagrangian Flow $X$ avoids the set $S$, that is
\begin{equation}
\mu_{n}\left(\set{x\in \R^{n} \st \left(t,X\left(t,x\right)\right)\in S\quad \text{for some}\; t\in \left[0,T\right]}\right) = 0.\label{avoidance}
\end{equation}
\end{itemize}
\end{theorem}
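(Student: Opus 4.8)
The plan is to reduce all three conclusions to two ingredients: the \emph{renormalisation property} for the vector field $b$, which is $BV$ only off $S$, and a flux estimate near $S$ controlled by the hypotheses \eqref{b dot distance}--\eqref{distance reciprocal}. Once renormalisation is established, uniqueness of bounded weak solutions follows by the classical argument of DiPerna \& Lions: if $u$ is a weak solution of \eqref{transport equation} with $u_{0}=0$, then $\beta(u)$ is again a weak solution for every $\beta\in C^{1}$, and taking $\beta(s)=s^{2}$ together with the bound $\divergence b\in L^{1}(0,T;L^{\infty}(\R^{n}))$ from \ref{div b bounded} yields, via Gr\"onwall, $u\equiv 0$. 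Existence of weak solutions is obtained in the standard way by regularising $b$ to smooth fields $b_{k}$, solving \eqref{transport equation} classically, and passing to the weak-$*$ limit using the uniform $L^{\infty}$ bound coming from \ref{div b bounded}; this direction does not require the singular set to be handled. The existence and uniqueness of the regular Lagrangian Flow, together with its compressibility constant $L$ in \eqref{compressibility constant}, then follow from the well-posedness of \eqref{transport equation} through the DiPerna--Lions/Ambrosio correspondence \cite{DiPernaLions89a,Ambrosio04,Ambrosio08}, for which the growth hypothesis \ref{b decay} guarantees global-in-time trajectories.

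The main obstacle is the renormalisation property across $S$. Fix $\chi\in C^{\infty}(\R;[0,1])$ with $\chi\equiv 0$ on $(-\infty,1]$ and $\chi\equiv 1$ on $[2,\infty)$, and set $\chi_{\varepsilon}(t,x)\define\chi\!\left(d_{S}(t,x)/\varepsilon\right)$, so that $\chi_{\varepsilon}$ vanishes in a neighbourhood of $S$ and equals $1$ on $\set{d_{S}>2\varepsilon}$. On the open set $\set{d_{S}>\varepsilon}$ the field $b$ is $BV_{\loc}$ by \ref{b renormalized on complement}, so Ambrosio's commutator estimate \cite{Ambrosio04} gives that $\beta(u)$ is renormalised there. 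Testing the equation for $\beta(u)$ against $\chi_{\varepsilon}\phi$ produces a single error term carried by the material derivative of the cut-off,
\[
\partial_{t}\chi_{\varepsilon}+b\cdot\nabla\chi_{\varepsilon}=\tfrac{1}{\varepsilon}\,\chi'\!\left(d_{S}/\varepsilon\right)\bigl(\partial_{t}d_{S}+b\cdot\nabla d_{S}\bigr),
\]
which is supported in the shrinking layer $\set{\varepsilon<d_{S}<2\varepsilon}$. There $\tfrac{1}{\varepsilon}\leq\tfrac{2}{d_{S}}$, and since $d_{S}$ is $1$-Lipschitz we bound this term by $2\,d_{S}^{-1}\bigl(1+\abs{b\cdot\nabla d_{S}}\bigr)$. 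The function $d_{S}^{-1}$ is locally integrable by \eqref{distance reciprocal}, while $d_{S}^{-1}\abs{b\cdot\nabla d_{S}}$ is locally integrable by H\"older's inequality applied in both time and space with the conjugate exponents of \ref{distance condition}. As the indicator of $\set{\varepsilon<d_{S}<2\varepsilon}$ tends to $0$ almost everywhere, dominated convergence forces the error to vanish as $\varepsilon\to 0$, yielding the global renormalisation identity.

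It remains to prove the avoidance property \eqref{avoidance}. For the regular Lagrangian Flow consider $\Phi_{x}(t)\define d_{S}\bigl(t,X(t,x)\bigr)$, which is absolutely continuous for almost every $x$ with $\Phi_{x}'(t)=\bigl(\partial_{t}d_{S}+b\cdot\nabla d_{S}\bigr)\bigl(t,X(t,x)\bigr)$ for a.e.\ $t$. Since $\abs{\partial_{t}d_{S}}\leq 1$,
\[
\int_{0}^{T}\frac{\abs{\Phi_{x}'(t)}}{\Phi_{x}(t)}\,\rmd t\leq \int_{0}^{T}\frac{1+\abs{b\cdot\nabla d_{S}}}{d_{S}}\bigl(t,X(t,x)\bigr)\,\rmd t.
\]
Integrating in $x$ over any ball, pushing forward with the compressibility bound \eqref{compressibility constant}, and applying H\"older exactly as above shows that this quantity is finite for almost every $x$. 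Consequently $\abs{\log\Phi_{x}(t)-\log\Phi_{x}(0)}$ is finite, so $\Phi_{x}(t)$ cannot reach $0$ on $\left[0,T\right]$, i.e.\ the trajectory avoids $S$; here $\set{x\st d_{S}(0,x)=0}$ is Lebesgue-null precisely because $d_{S}^{-1}(0,\cdot)$ is integrable by \eqref{distance reciprocal}. The localisation to balls, legitimate because \ref{b decay} confines trajectories to a bounded region, reconciles the $L^{q}_{\loc}$ and $L^{q^{*}}_{\loc}$ hypotheses with these global estimates.
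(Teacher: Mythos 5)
Your proposal follows the paper's own route almost step for step: existence and the Gronwall uniqueness step are quoted from DiPerna--Lions, renormalisation across $S$ comes from a cut-off $\chi\left(d_{S}/\varepsilon\right)$ combined with Ambrosio's $BV$ theorem away from $S$, the layer term is controlled by the pointwise bound $\varepsilon^{-1}\lesssim d_{S}^{-1}$ together with H\"older for the conjugate exponents in \ref{distance condition}, and avoidance is proved by a logarithmic estimate along trajectories pushed through Fubini and the compressibility bound \eqref{compressibility constant}. This is exactly the content of the paper's Theorems \ref{theorem - existence and uniqueness} and \ref{theorem - new avoidance}.

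There is, however, one genuine gap. Your cut-off argument is carried out on $\left(0,T\right)\times\R^{n}$, so it only shows that $\beta\left(u\right)$ satisfies the equation distributionally in the interior, i.e.\ the \emph{local} renormalisation property of Definition \ref{definition - locally renormalized solution}. It does not show that $\beta\left(u\right)$ is a weak solution \emph{with initial data} $\beta\left(u_{0}\right)$, which is what Definition \ref{definition - renormalized solution} demands and what your Gronwall step actually uses: the quantity $\int u^{2}\left(t,\cdot\right)\rmd x$ must be known to vanish at $t=0$. The distinction is substantive, not cosmetic. Depauw's example \cite{Depauw03}, recalled in the paper, gives a bounded divergence-free field admitting a nonzero weak solution with $u_{0}=0$ that \emph{is} locally renormalised on $\left(0,T\right)\times\R^{n}$; interior renormalisation therefore cannot imply uniqueness. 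The paper closes precisely this hole with De Lellis's trick (the Lemma recalled in Section \ref{section - existence and uniqueness}): extend $b$ by $b\left(t,\cdot\right)\equiv 0$ for $t<0$ --- every hypothesis survives, since $d_{S}\left(t,x\right)\geq\abs{t}$ for $t<0$ --- and run the identical cut-off argument on the open set $\left(-\infty,T\right)\times\R^{n}$, where test functions may straddle $t=0$. Your estimates go through verbatim in that setting, so the repair is routine, but as written you have proved a renormalisation statement that is too weak to give uniqueness.

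Two further steps are asserted where justification is genuinely needed. First, $\chi\left(d_{S}/\varepsilon\right)\phi$ is only Lipschitz, so computing its derivatives requires the a.e.\ chain rule for Lipschitz compositions (Serrin's result cited in the paper), and its use as a test function requires that locally renormalised solutions admit compactly supported Lipschitz test functions (Remark \ref{remark - Lipschitz test functions}). Second, the identity $\Phi_{x}'\left(t\right)=\left(\partial_{t}d_{S}+b\cdot\nabla d_{S}\right)\left(t,X\left(t,x\right)\right)$ is exactly the chain rule \eqref{chain rule} that the paper flags as non-trivial, since $d_{S}$ is merely Lipschitz and $X$ merely absolutely continuous; it is obtained from Marcus--Mizel and Serrin--Varberg together with the compressibility constant, which ensures that a.e.\ trajectory spends a.e.\ time outside the null set where $d_{S}$ fails to be differentiable. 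Finally, your claim that $\set{x \st d_{S}\left(0,x\right)=0}$ is null ``because $d_{S}^{-1}\left(0,\cdot\right)$ is integrable'' does not follow from \eqref{distance reciprocal}, which is an almost-every-$t$ statement; nullity of $S\left(0\right)$ can instead be deduced from \eqref{distance reciprocal} by noting that $d_{S}\left(t,x\right)\leq t$ for $x\in S\left(0\right)$, so $\mu_{n}\left(S\left(0\right)\right)>0$ would force the $L^{p^{*}}$-in-time norm to diverge.
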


We prove this using the theory of renormalized solutions (after DiPerna \& Lions \cite{DiPernaLions89a} and Ambrosio \cite{Ambrosio04}) which we recall in Section \ref{section - existence and uniqueness} in their local formulation.

The avoidance result \eqref{avoidance} was first studied in the autonomous case by Aizenman \cite{Aizenman78a}, then Cipriano \& Cruzeiro \cite{CiprianoCruzeiro05} and Robinson \& Sharples \cite{RobinsonSharples13JDE} in the non-autonomous case, which we recall in Section \ref{section - avoidance}. In the present work we improve these result by accounting for the direction of $b$: the condition \eqref{b dot distance} only requires the component of $b$ \textit{normal} to $S$ to be integrable.

The condition \eqref{distance reciprocal} encodes some anisotropic fractal detail of the set $S$, first studied in Robinson \& Sharples \cite{RobinsonSharples13JDE}. In Section \ref{section - fractal geometry} we recall the basic properties of the `codimension print', and its relationship to the more familiar box-counting dimensions. In Section \ref{section - codimension of trajectories} we study the codimension prints for sets $S$ consisting of singularities that evolve with time, obtaining the following result in terms of the box-counting dimensions of the temporal sections $S\left(t\right) \define \set{x\in \R^{n} \st \left(t,x\right) \in S}$ of the singular set $S$.

\begin{proposition}\label{prop:traj}
Suppose $S_{0}$ is compact, let $Z\colon\left[0,T\right]\times S_{0} \rightarrow \R^{n}$ and suppose there exists $\alpha$ in the range $0<\alpha \leq 1$ and $K>0$ such that
\begin{align*}
\abs{Z\left(t_{1},x\right)-Z\left(t_{2},x\right)} &\leq K \abs{t_{1}-t_{2}}^{\alpha} &\forall t_{1},t_{2}\in\left[0,T\right]\quad \forall x\in S_{0}
\end{align*}
(i.e. $Z$ is $\alpha$-H\"older continuous in $t$, uniformly in $x$).

Let $S=\set{\left(t,Z\left(t,x\right)\right) \st t\in\left[0,T\right],\  x\in S_{0}}$. If $b$ satisfies \ref{b integrable}, \ref{div b bounded}, \ref{b decay}, \ref{b renormalized on complement}, and
\[
b\cdot \nabla d_{S} \in L^{1}\left(0,T;L^{q}_{\loc}\left(\R^{n}\right)\right)
\]
for some $q$ with
\[
\frac{1}{q} + \frac{1}{\alpha\left(n-\sup_{t\in\left[0,T\right]} \dim_{\rm B} S\left(t\right)\right)}<1
\]

then the conclusions of Theorem \ref{theorem - all results} hold. Here, $\dim_{\rm B} S\left(t\right)$ denotes the box-counting dimension of the set $S(t)$, which is defined in Definition \ref{def:box} below.
\end{proposition}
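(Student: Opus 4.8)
The plan is to deduce the result directly from Theorem~\ref{theorem - all results} by verifying its hypotheses for the present set $S$. Conditions \ref{b integrable}--\ref{b renormalized on complement} are assumed verbatim, and the estimate \eqref{b dot distance} of condition \ref{distance condition} holds with the choice $p=1$ (so that $p^{*}=\infty$) for the given $q$. Thus the entire content of the proof is to establish the reciprocal bound \eqref{distance reciprocal}, namely $d_{S}^{-1}\in L^{\infty}(0,T;L^{q^{*}}_{\loc}(\R^{n}))$ with $\tfrac1q+\tfrac1{q^{*}}=1$, together with the preliminary observation that $S$ is compact (which follows from compactness of $S_{0}$ and the uniform Hölder bound forcing $S$ to be bounded, upon passing to closures if necessary --- harmless since $\dim_{\rm B}$ is closure-invariant).

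The key step translates the uniform Hölder regularity in time into a pointwise comparison between the space-time distance $d_{S}$ and the purely spatial distance to the time-slice. First I would write $d_{S}(t,x)=\inf_{s\in[0,T]}\sqrt{(t-s)^{2}+\dist(x,S(s))^{2}}$ and invoke the Hölder bound: for every $y\in S_{0}$, $\abs{x-Z(s,y)}\ge \abs{x-Z(t,y)}-K\abs{t-s}^{\alpha}\ge \dist(x,S(t))-K\abs{t-s}^{\alpha}$, so taking the infimum over $y$ gives $\dist(x,S(s))\ge \dist(x,S(t))-K\abs{t-s}^{\alpha}$. Writing $\rho=\dist(x,S(t))$ and minimising $(t-s)^{2}+(\rho-K\abs{t-s}^{\alpha})_{+}^{2}$ over $s$ --- splitting into the regimes $\abs{t-s}\lesssim(\rho/K)^{1/\alpha}$ and $\abs{t-s}\gtrsim(\rho/K)^{1/\alpha}$ --- yields, for $\rho\le 1$, a lower bound $d_{S}(t,x)\ge C\,\dist(x,S(t))^{1/\alpha}$ with $C=C(\alpha,K)>0$, and hence $d_{S}(t,x)^{-1}\le C^{-1}\dist(x,S(t))^{-1/\alpha}$ near $S$.

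It then remains to integrate. On a fixed ball $B$ the only singularity of $d_{S}^{-1}$ sits near $S$, so by the previous step $\int_{B}d_{S}(t,x)^{-q^{*}}\,\rmd x \lesssim \int_{B}\dist(x,S(t))^{-q^{*}/\alpha}\,\rmd x + C_{B}$. By the fractal-integrability property recalled in Section~\ref{section - fractal geometry} --- that $\dist(\cdot,A)^{-s}\in L^{1}_{\loc}$ whenever $s<n-\dim_{\rm B}A$ --- this is finite provided $q^{*}/\alpha<n-\dim_{\rm B}S(t)$. Using $\dim_{\rm B}S(t)\le d\define\sup_{t}\dim_{\rm B}S(t)$ and the conjugacy $\tfrac1{q^{*}}=1-\tfrac1q$, the requirement $q^{*}/\alpha<n-d$ is precisely $\tfrac1q+\tfrac1{\alpha(n-d)}<1$, the hypothesis of the proposition (which also forces $\alpha(n-d)>1$, guaranteeing an admissible $q\ge1$). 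This furnishes \eqref{distance reciprocal} with $p^{*}=\infty$, and the proof concludes by invoking Theorem~\ref{theorem - all results}.

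The main obstacle is the uniformity in $t$: the integral bound above must hold with a constant independent of $t\in[0,T]$ in order to place $d_{S}^{-1}$ in $L^{\infty}(0,T;L^{q^{*}}_{\loc})$ rather than merely in each slice. The cleanest route is to estimate the \emph{anisotropic} covering of the whole space-time set $S$ at once rather than slice-by-slice: partitioning $[0,T]$ into intervals of length $\tau\sim\delta^{1/\alpha}$, the Hölder bound confines $S$ over each such interval to a $\delta$-neighbourhood of a single slice $S(t_{0})$, coverable by $\sim\delta^{-d'}$ spatial $\delta$-boxes for any $d'>d$; this produces a uniform bound of order $\delta^{-(d'+1/\alpha)}$ on the number of anisotropic boxes covering $S$, equivalently a bound on its codimension print, from which \eqref{distance reciprocal} follows with constants independent of $t$. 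Upgrading the slice-wise box-dimension bound to this uniform covering estimate is the one genuinely delicate point; everything else is the bookkeeping of exponents described above.
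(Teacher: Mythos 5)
Your overall route is the same as the paper's. The paper proves Proposition \ref{prop:traj} by combining Theorem \ref{theorem - existence and uniqueness} (applied exactly as you do, with $p=1$, $p^{*}=\infty$, so that only \eqref{distance reciprocal} needs checking) with Theorem \ref{theorem - singular trajectories}, whose content is precisely your key step: the uniform H\"older bound gives $d_{S(t)}(x)\leq (K+1)\,d_{S}(t,x)^{\alpha}$ whenever $d_{S}(t,x)<1$. The paper obtains this from a (near-)minimiser $(s,Z(s,y_{0}))$ of the space-time distance and the triangle inequality, while you obtain the equivalent lower bound $d_{S}(t,x)\geq C\,d_{S(t)}(x)^{1/\alpha}$ by a two-regime minimisation; these are the same estimate. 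The slice-wise integration and the exponent bookkeeping (including that the hypothesis forces $\alpha\left(n-d\right)>1$ and $q^{*}<\alpha\left(n-d\right)$, with $d=\sup_{t}\dim_{\rm B}S(t)$) also match the paper's proof, which runs Aizenman's layer-cake argument on each slice.

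The genuine gap is the one you flag yourself and then do not close: uniformity in $t$. Knowing $\dim_{\rm B}S(t)\leq d$ for every $t$ yields, for each \emph{fixed} $t$, a threshold scale $\varepsilon_{0}(t)>0$ below which $\mu_{n}(\{d_{S(t)}<\varepsilon\})\leq\varepsilon^{s}$ (equivalently $N(S(t),\varepsilon)\leq\varepsilon^{-d'}$ for $d'>d$), but the slice-wise definition of box dimension gives no control whatsoever on $\inf_{t}\varepsilon_{0}(t)$. Your proposed repair --- covering each time-slab of width $\sim\varepsilon^{1/\alpha}$ by ``$\sim\varepsilon^{-d'}$ spatial boxes for any $d'>d$'' --- \emph{assumes} exactly this uniform covering bound, so it restates the difficulty rather than resolving it; note that uniformity cannot follow from the dimension bound and a diameter bound alone ($N$ equally spaced points have box dimension $0$ but neighbourhood measure of order $N\varepsilon^{n}$, so the implied constant degrades with $N$). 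For comparison, the paper dispatches this point by invoking Lemma \ref{lemma - Minkowski Sausage} to assert $\mu_{n}(\{d_{S(t)}<\varepsilon\})\leq C\varepsilon^{r/\alpha+\delta/\alpha}$ for all $\varepsilon\in(0,M]$ and all $t\in[0,T]$ with $C$ depending only on the exponent and $\omega=\sup_{t}\diameter S(t)$, after which its layer-cake bound is manifestly $t$-independent; your instinct that this uniform neighbourhood estimate is ``the one genuinely delicate point'' is exactly right (it is also the least-detailed step of the paper's own argument), but as submitted your proposal is incomplete there, and that step is the entire distance between your sketch and a proof.
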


\subsection{Existence and Uniqueness}\label{section - existence and uniqueness}
The existence of weak solutions of \eqref{transport equation} requires only the additional assumption that the (spatial) divergence $\divergence b \in L^{1}\left(0,T;L^{\infty}\left(\R^{n}\right)\right)$ and follows from a standard compactness argument (see, for example, DiPerna \& Lions \cite{DiPernaLions89a} Proposition II.1).
However without further regularity assumptions the uniqueness of weak solutions is not assured. Indeed, Depauw \cite{Depauw03} constructs a divergenceless, bounded vector field $b$ that, in addition to the trivial zero solution, admits a weak solution $u\neq 0$ with initial data $u_{0}= 0$.

In their seminal paper DiPerna \& Lions \cite{DiPernaLions89a} proved that weak solutions of \eqref{transport equation} are unique under the additional assumptions that $b$ has some Sobolev regularity (i.e. $b$ is integrable and has integrable weak derivatives), and is either bounded or decays sufficiently quickly at infinity. De Lellis \cite{DeLellis08} comments that DiPerna \& Lions' strategy can be decomposed into an `easy' part and a `hard' part. The easy part establishes uniqueness provided that every weak solution also satisfies (in a distributional sense) the `renormalized' equation
\begin{align}\label{renormalized transport equation}
\partial_{t}\beta\left(u\right)+b\cdot \nabla \beta\left(u\right)&=0 &\text{on}\;\left(0,T\right)\times\R^{n}
\end{align}
for all maps $\beta$ in an appropriate class, which trivially holds if $u$ is a smooth solution. Formally, by setting $\beta\left(z\right)=z^{2}$, integrating \eqref{renormalized transport equation}, and applying the divergence theorem we obtain
\begin{align}\label{formal gronwall}
\frac{\rmd}{\rmd t} \int_{\R^{n}} u^{2}\left(t,x\right) \rmd x \leq \norm{\divergence b\left(t\right)}_{L^{\infty}\left(\R^{n}\right)} \int_{\R^{n}}u^{2}\left(t,x\right)\rmd x,
\end{align}
from which we conclude with Gronwall's inequality that $u\equiv 0$ for initial data $u_{0}=0$. The uniqueness of solutions then follows from the linearity of \eqref{renormalized transport equation}. DiPerna \& Lions' results make this argument rigorous for weak solutions by employing a distributional form of Gronwall's inequality (Theorem II.2 of DiPerna \& Lions \cite{DiPernaLions89a}).

The hard part of the DiPerna-Lions theory is to demonstrate that all weak solutions satisfy the renormalized equation \eqref{renormalized transport equation} provided that $b$ has Sobolev regularity $b\in L^{1}(0,T;W_{\loc}^{1,1}(\R^{n}))$. Essentially, this follows from noting that for each weak solution $u$, its mollification $u_{\varepsilon}=u\ast \rho_{\varepsilon}$ satisfies $\partial_{t}u_{\varepsilon}+b\cdot \nabla u_{\varepsilon} = r_{\varepsilon}$ where 
$\rho_{\varepsilon}$ is the standard (spatial) mollifier and $r_{\varepsilon}=b\cdot \nabla u_{\varepsilon} - \left(b\cdot \nabla u\right)\ast \rho_{\varepsilon}$ is the commutator of the second term of \eqref{transport equation} with respect to the mollifier. As the mollified solution is smooth, it follows that $\partial_{t} \beta\left(u_{\varepsilon}\right)+b\cdot \nabla \beta\left(u_{\varepsilon}\right)=\beta^{\prime}\left(u_{\varepsilon}\right) r_{\varepsilon}$ for arbitrary $\beta\in C^{1}\left(\R\right)$. Passing to the limit as $\varepsilon\rightarrow 0$ we conclude that $u$ is a renormalized solution provided that the commutator $r_{\varepsilon}\rightarrow 0$ in $L^{1}_{\loc}\left(\R^{n}\right)$. The technical portion of DiPerna \& Lions' uniqueness result is precisely that the commutator $r_{\varepsilon}$ converges for Sobolev vector fields $b$ (Lemma II.1 of DiPerna \& Lions \cite{DiPernaLions89a}).

The DiPerna-Lions theory has been adapted to a wider class of vector fields including those that are `piecewise Sobolev' (Lions \cite{Lions98}), of Vlasov type (Bouchut \cite{Bouchut01}), or have `conormal BV' regularity (Colombini \& Lerner \cite{colombinilerner05}). In these cases, without Sobolev regularity of the vector field, the convergence of the commutator $r_{\varepsilon}$ is highly sensitive to the choice of mollifier $\rho_{\varepsilon}$ (see Capuzzo Dolcetta \& Perthame\cite{CapuzzoDolcettaPerthame96}). These results rely on an anisotropic smoothing argument, in which the mollifier $\rho_{\varepsilon}$ is locally chosen to account for the particular structure of the vector field (see \cite{Ambrosio04}).

In a significant breakthrough, Ambrosio \cite{Ambrosio04} extended DiPerna \& Lions' theory to the large class of vector fields of bounded variation $b\in L^{1}\left(0,T;BV_{\loc}\left(\R^{n}\right)\right)$ (i.e. the spatial distributional derivative of $b$ is a measure with finite total variation), which includes the classes considered in \cite{Lions98}, \cite{Bouchut01}, and \cite{colombinilerner05}. Ambrosio's highly technical analysis uses Alberti's rank one theorem, a deep measure theoretic result, to show that at small scales any BV vector field behaves like one of the `conormal' BV fields considered in \cite{colombinilerner05}.

In the present work we consider a class of vector fields that are locally in $L^{1}\left(0,T;BV_{\loc}\left(\R^{n}\right)\right)$ \emph{except} on a set $S\subset \left[0,T\right]\times \R^{n}$ (we make this precise later). Ultimately we will require that the `singular set' $S$ will have a small anisotropic fractal dimension in the sense of Robinson \& Sharples \cite{RobinsonSharples13JDE}, which is related to the familiar box-counting dimensions.

Vector fields of this type appear quite naturally in point vortex models of fluid dynamics. For example, Crippa et al. \cite{CrippaLopesFilhoMiotNussenzveigLopes16} consider the vortex-wave system: in this $2$-dimensional setting the Biot-Savart law is used to recover the velocity field $b$ of a fluid from its vorticity $\omega=\curl b$, which includes an initial dirac mass at $z_{0}\in \R^{2}$ that evolves along a Lipschitz trajectory $t\mapsto z\left(t\right)$. The resulting velocity field $b\left(t,x\right)=v(t,x)+\left(x-z(t)\right)^{\perp}/\abs{x-z(t)}^{2}$, with $v$ bounded and enjoying spatial Sobolev regularity, does not have bounded variation (nor finite $L^{2}$ norm) in any neighbourhood of the trajectory of the dirac mass $S=\set{\left(t,z\left(t\right)\right) \st t\in\left[0,T\right]}$, therefore falling outside the scope of Ambrosio's uniqueness result. Nevertheless, exploiting the explicit form of the singular part of $b$, it is proved in \cite{CrippaLopesFilhoMiotNussenzveigLopes16} that there exists a unique regular Lagrangian flow and that generically, its trajectories do not intersect the trajectory of the point vortex. Hence Theorem \ref{theorem - all results} can be seen as an extension of that result for more general singular sets.
In Subsection \ref{subsec:vortex-wave} we will explain more in detail the link between the present work and \cite{CrippaLopesFilhoMiotNussenzveigLopes16}. In particular, the avoidance result of Theorem \ref{theorem - all results} may be simply adapted to retrieve the result of Crippa et al. \cite{CrippaLopesFilhoMiotNussenzveigLopes16} in a straightforward way.
Similar questions also occur in the setting of Vlasov-Poisson equations with densities including point charges that generate point singularities in the electric field. We refer to the articles by Caprino et al. \cite{caprino} for the two-dimensional case and by Crippa, Ligabue, Saffirio \cite{Ligabue} for the three-dimensional case.
Finally,  in connection with the study of singular velocities generated by singular measure-valued vorticities in fluid dynamics, we mention the recent work by Fefferman, Pooley and Rodrigo \cite{Fefferman} on the construction of velocity fields for active scalar systems with measure-valued solutions whose support does not satisfy conservation of the Hausdorff dimension.
\subsubsection{Local renormalisation}
We now recall the local formulation of the renormalization theory (see DiPerna \& Lions \cite{DiPernaLions89a}, Ambrosio \cite{Ambrosio04} and De Lellis \cite{DeLellis08}).

\begin{definition}\label{definition - renormalized solution}
A weak solution $u\in L^{\infty}_{\loc}\left(\left(0,T\right)\times\R^{n}\right)$ of \eqref{transport equation} with initial data $u_{0}\in L^{\infty}_{\loc}\left(\R^{n}\right)$ is said to be \emph{renormalized} if for all $\beta\in C^{1}\left(\R\right)$ the map $\beta\left(u\right)$ is a weak solution of \eqref{transport equation} with initial data $\beta\left(u_{0}\right)$.
\end{definition}
We say that $b$ has the \emph{renormalization property} if every weak solution of \eqref{transport equation} is a renormalized solution.

The formal uniqueness argument in the previous section holds for renormalized solutions:

\begin{theorem}[\cite{DiPernaLions89a} Theorem II.2]\label{theorem - DiPLi}
If the vector field $b$ satisfies \ref{b integrable}, \ref{div b bounded} and \ref{b decay} and $b$ has the renormalization property then for all $u_{0}\in L^{\infty}_{\loc}\left(\R^{n}\right)$ there exists a unique solution to \eqref{transport equation}.
\end{theorem}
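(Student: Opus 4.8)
The plan is to treat existence and uniqueness separately, the latter being the substance of the theorem. For existence I would mollify $b$ to obtain smooth fields $b_{\varepsilon}$, solve \eqref{transport equation} classically by the method of characteristics, and use the divergence bound \ref{div b bounded} to produce a uniform $L^{\infty}_{\loc}$ estimate on the approximants $u_{\varepsilon}$, while the growth hypothesis \ref{b decay} prevents the escape of mass to spatial infinity in finite time; a weak-$*$ limit of the $u_{\varepsilon}$ is then a weak solution (this is the standard compactness argument of \cite{DiPernaLions89a}, Proposition II.1). Since the renormalization property is assumed outright, uniqueness is where the hypotheses must genuinely be exploited.

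By the linearity of \eqref{weak transport equation}, the difference of two weak solutions sharing the initial datum $u_{0}$ is again a weak solution, now with zero initial data; it therefore suffices to show that $u_{0}=0$ forces $u\equiv 0$. Here I would invoke the renormalization property with $\beta\in C^{1}(\R)$ satisfying $\beta(0)=0$ and agreeing with $z\mapsto z^{2}$ on a large interval: because $u\in L^{\infty}_{\loc}$ is bounded on compact sets, this recovers the square on the relevant range while keeping $\beta(u)\in L^{\infty}_{\loc}$, and $\beta(u_{0})=0$. Thus $\beta(u)$ is itself a weak solution of \eqref{transport equation} with zero initial data.

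To extract an energy estimate I would test this renormalized identity against $\phi(t,x)=\theta(t)\,\gamma(x)$, with $\theta$ approximating $\mathbf{1}_{[0,s]}$ and $\gamma$ a spatial weight, which produces for the localized energy $E(s)\define\int_{\R^{n}}\beta(u)(s,x)\,\gamma(x)\,\rmd x$ the inequality
\begin{equation*}
E(s)\leq \int_{0}^{s}\norm{\divergence b(t)}_{L^{\infty}(\R^{n})}\,E(t)\,\rmd t + \int_{0}^{s}\abs{\int_{\R^{n}}\beta(u)\,b\cdot\nabla\gamma\,\rmd x}\,\rmd t .
\end{equation*}
Since $t\mapsto\norm{\divergence b(t)}_{L^{\infty}}$ lies in $L^{1}(0,T)$ by \ref{div b bounded} and $E(0)=0$, a distributional form of Gronwall's inequality (as in \cite{DiPernaLions89a}) reduces everything to controlling the flux term $\int\beta(u)\,b\cdot\nabla\gamma\,\rmd x$. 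Implicit here is the standard fact that a renormalized solution admits a representative that is weakly continuous in time, so that $E(s)$ is a genuine function of $s$ with the stated distributional derivative.

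The hard part is precisely the control of this flux, and it is where the decay hypothesis \ref{b decay} is indispensable. Taking $\gamma=\gamma_{R}(x)=\gamma(x/R)$ a cutoff equal to $1$ on $B_{R}$, supported in $B_{2R}$, with $\abs{\nabla\gamma_{R}}\lesssim R^{-1}$, one has $\abs{b\cdot\nabla\gamma_{R}}\lesssim\abs{b}/(1+\abs{x})$ on the annulus $R\leq\abs{x}\leq 2R$ and $0$ elsewhere. Splitting $b/(1+\abs{x})=f+g$ as in \ref{b decay}, with $f\in L^{1}((0,T)\times\R^{n})$ and $g\in L^{1}(0,T;L^{\infty})$, the $f$-contribution is the tail of an integrable function and vanishes as $R\to\infty$, whereas the part $g$ only encodes that $b$ grows at most linearly, so that characteristics spread at most exponentially in time. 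The correct way to absorb the $g$-contribution is therefore a finite-speed-of-propagation argument: rather than letting $R\to\infty$ against a possibly infinite global energy, I would choose $\gamma$ to be a barrier adapted to this linear growth, so that $b\cdot\nabla\gamma$ carries a favourable sign and is absorbed into the Gronwall coefficient, thereby forcing $u\equiv 0$ on an exhausting family of space-time regions. Reconciling this localisation with the global Gronwall estimate, and keeping the time-regularity of $E$ under control, is the main obstacle; the remaining manipulations are routine.
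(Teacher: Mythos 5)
Your proposal is correct and follows essentially the same route as the paper, which takes this statement directly from DiPerna \& Lions (their Theorem II.2) and sketches exactly this argument: existence via the standard mollification--compactness argument (their Proposition II.1), and uniqueness via linearity plus the renormalization property applied to a bounded $C^{1}$ surrogate for $z\mapsto z^{2}$, a distributional form of Gronwall's inequality, and cutoff test functions whose flux term is controlled through the splitting of $b/(1+\abs{x})$ in hypothesis \ref{b decay}. The time-adapted (expanding or weighted) cutoff you describe for handling the $L^{1}(0,T;L^{\infty}(\R^{n}))$ part is precisely the device used in the cited proof, so the remaining details you defer are indeed routine.
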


To prove our main result we proceed locally: we will show that $b$ has the renormalization property `away from $S$' where it has bounded variation, and also in a neighbourhood of $S$, as the set is sufficiently small and $b$ sufficiently integrable.

To define renormalization on an open set $\Omega\subset \left[0,T\right]\times \R^{n}$ we must suppress the requirement for the initial condition $\beta\left(u\left(0,\cdot\right)\right)=\beta\left(u_{0}\right)$ to be satisfied (as indeed $t=0$ may not intersect $\Omega$). The transformed initial condition, which is necessary for the Gronwall argument of Theorem \ref{theorem - DiPLi}, can then be recovered by extending $b$ to negative time.

\begin{definition}\label{definition - locally renormalized solution}
A weak solution $u\in L^{\infty}_{\loc}\left(\left(0,T\right)\times\R^{n}\right)$ of \eqref{transport equation} is said to be \emph{locally renormalized} on an open subset $\Omega\subset \left(0,T\right)\times\R^{n}$ if for all $\beta\in C^{1}\left(\R\right)$
\begin{equation}\label{renormalization property}
\iint_{\Omega} \beta\left(u\right) \left(\frac{\partial\phi}{\partial t} + b\cdot \nabla \phi + \divergence b\;\phi\; \right)\rmd x\;\rmd t=0
\end{equation}
for all test maps $\phi\in C_{c}^{\infty}\left(\Omega\right)$.
\end{definition}

Again, we say that $b$ has the \emph{local renormalization property} on $\Omega$ if every weak solution of \eqref{transport equation} is locally renormalized on $\Omega$.

\begin{rem}\label{remark - Lipschitz test functions}
If a weak solution $u$ is locally renormalized on $\Omega$ then \eqref{renormalization property} in fact holds for all compactly support Lipschitz maps $\phi\colon \Omega\rightarrow \R$. This follows by approximating $\phi$ by $\phi_{\varepsilon} = \phi \ast \rho_{\varepsilon}$ where $\rho_{\varepsilon}$ is the standard mollifier (see, for example, \S 4.2 of Evans \& Gariepy \cite{EvansGariepy92}). The derivative of the mollified $\phi$
\[
\nabla \phi_{\varepsilon} = \nabla \left(\phi\ast \rho_{\varepsilon}\right) = \left(\nabla \phi\right)_{\varepsilon} \rightarrow \nabla \phi
\]
pointwise as $\varepsilon\rightarrow 0$ wherever the derivative $\nabla \phi$ exists, which is almost everywhere by Rademacher's Theorem. Further, as $\phi$ is Lipschitz the derivative $\nabla\phi$ is bounded so $\norm{\left(\nabla \phi\right)_{\varepsilon}}_{L^{\infty}\left(\Omega_{\varepsilon}\right)} \leq \norm{\nabla \phi}_{L^{\infty}\left(\Omega\right)}$. Consequently,
\[
\abs{\beta\left(u\right) b\cdot\nabla \phi_{\varepsilon}} \leq \abs{\beta\left(u\right) b\cdot\nabla \phi} \in L^{1}\left(\Omega\right)
\]
so by the Lebesgue Dominated Convergence Theorem $\beta\left(u\right) b\cdot\nabla \phi_{\varepsilon} \rightarrow \beta\left(u\right) b\cdot\nabla \phi$ in $L^{1}\left(\Omega\right)$.
A similar treatment for the remaining terms in \eqref{renormalization property} gives
\begin{multline*}
0 = \iint_{\Omega} \beta\left(u\right) \left(\frac{\partial\phi_{\varepsilon}}{\partial t} + b\cdot \nabla \phi_{\varepsilon} + \divergence b\;\phi_{\varepsilon}\; \right)\rmd x\;\rmd t \\ \rightarrow \iint_{\Omega} \beta\left(u\right) \left(\frac{\partial\phi}{\partial t} + b\cdot \nabla \phi + \divergence b\;\phi\; \right)\rmd x\;\rmd t
\end{multline*}
where the first equality holds as $\phi_{\varepsilon} \in C^{\infty}_{c}\left(\Omega\right)$ and $u$ is a locally renormalised solution of \eqref{transport equation} on $\Omega$. Consequently \eqref{renormalization property} holds for compactly supported Lipschitz $\phi$.
\end{rem}

The non-uniqueness example of Depauw \cite{Depauw03} illustrates that local renormalization on $\left(0,T\right)\times \R^{n}$ is not sufficient for renormalization: the author constructs a vector field $b$ and a solution $u$ with initial data $u_{0}$ such that $\beta\left(u\right)$ is a weak solution (hence $u$ is locally renormalized on $\left(0,T\right)\times \R^{n}$) with initial data distinct from $\beta\left(u_{0}\right)$ (hence $u$ is not renormalized).

We can recover renormalization from local renormalization by using the following `trick' made explicit in De Lellis.

\begin{lemma}[De Lellis \cite{DeLellis08a} \S 2.3]
Let $b\in L^{1}_{\loc}\left(\left(0,T\right)\times\R^{n}\right)$ and extend $b$ to negative time with $b\left(t,\cdot\right)\equiv 0$ for $t<0$.

If $b$ is locally renormalized on $\left(-\infty,T\right)\times \R^{n}$, then
$b$ is renormalized.
\end{lemma}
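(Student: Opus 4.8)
The plan is to exploit the extension of $b$ to negative time in order to encode the initial condition $u_{0}$ as an interior jump at $t=0$, so that the local renormalization property on $\left(-\infty,T\right)\times\R^{n}$ — which by itself carries no information about initial data — can be forced to reproduce the transformed initial condition $\beta\left(u_{0}\right)$.

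First, given a weak solution $u$ of \eqref{transport equation} on $\left(0,T\right)\times\R^{n}$ with initial data $u_{0}$, I would extend it to $\tilde u$ on $\left(-\infty,T\right)\times\R^{n}$ by setting $\tilde u\left(t,\cdot\right)\define u_{0}$ for $t<0$ and $\tilde u\define u$ for $t\geq 0$. The first step is to verify that $\tilde u$ is a weak (distributional) solution of the transport equation with the extended field on the open set $\left(-\infty,T\right)\times\R^{n}$, i.e. that $\iint \tilde u\left(\partial_{t}\psi + b\cdot\nabla\psi + \divergence b\,\psi\right)=0$ for every $\psi\in C^{\infty}_{c}\left(\left(-\infty,T\right)\times\R^{n}\right)$. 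Splitting the integral at $t=0$, the contribution from $t<0$ reduces to $\int_{\R^{n}}u_{0}\left(x\right)\psi\left(0,x\right)\,\rmd x$ since $b\equiv 0$ and $\tilde u=u_{0}$ there, while the contribution from $t>0$ equals $-\int_{\R^{n}}u_{0}\left(x\right)\psi\left(0,x\right)\,\rmd x$ upon applying the weak formulation \eqref{weak transport equation} for $u$ to the restriction $\psi\vert_{\left[0,T\right)}\in C^{\infty}_{c}\left(\left[0,T\right)\times\R^{n}\right)$. The two contributions cancel, so $\tilde u$ is a weak solution on $\left(-\infty,T\right)\times\R^{n}$.

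Since $b$ is locally renormalized on $\left(-\infty,T\right)\times\R^{n}$ by hypothesis, the renormalized identity \eqref{renormalization property} holds for $\beta\left(\tilde u\right)$ against every test map in $C^{\infty}_{c}\left(\left(-\infty,T\right)\times\R^{n}\right)$. To recover the global renormalized equation together with its initial term, I would fix $\phi\in C^{\infty}_{c}\left(\left[0,T\right)\times\R^{n}\right)$, extend it to some $\Phi\in C^{\infty}_{c}\left(\left(-\infty,T\right)\times\R^{n}\right)$, and test \eqref{renormalization property} against $\chi_{\delta}\,\Phi$, where $\chi_{\delta}=\chi_{\delta}\left(t\right)$ is a smooth monotone cutoff vanishing for $t\leq-\delta$ and equal to $1$ for $t\geq 0$. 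As $\chi_{\delta}$ depends only on $t$, the product rule splits the integrand into $\chi_{\delta}'\,\Phi$ and $\chi_{\delta}\left(\partial_{t}\Phi + b\cdot\nabla\Phi + \divergence b\,\Phi\right)$. Letting $\delta\to 0$: the second term converges by dominated convergence ($\chi_{\delta}\to\mathbf 1_{\{t>0\}}$) to $\int_{0}^{T}\!\!\int_{\R^{n}}\beta\left(u\right)\left(\partial_{t}\phi + b\cdot\nabla\phi + \divergence b\,\phi\right)$, the $t<0$ part being $O\left(\delta\right)$ because $b\equiv 0$ there; and since $\chi_{\delta}'\geq 0$ is supported in $\left[-\delta,0\right]$ with $\int\chi_{\delta}'=1$, on which $\tilde u=u_{0}$, the first term converges to $\int_{\R^{n}}\beta\left(u_{0}\left(x\right)\right)\phi\left(0,x\right)\,\rmd x$ by continuity of $\Phi$ in $t$. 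Adding the two limits gives exactly the weak formulation \eqref{weak transport equation} for $\beta\left(u\right)$ with initial data $\beta\left(u_{0}\right)$, so $u$ is renormalized.

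I expect the main obstacle to be the first step — showing the extended $\tilde u$ solves the equation on the open set — since this is precisely where the initial-data term of the weak formulation must be shown to offset exactly the interior flux created at $t=0$ by the extension; this cancellation is the conceptual heart of the \emph{trick}. The remaining $\delta\to 0$ passage is routine once $\chi_{\delta}$ is chosen monotone, so that $\chi_{\delta}'$ acts as an approximate identity concentrating at $t=0$, and once one observes that the values of the extension $\Phi$ for $t<0$ are immaterial in the limit.
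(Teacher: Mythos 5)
Your proposal is correct and is essentially the proof the paper intends: the paper states this lemma without proof, attributing the ``trick'' to De Lellis \cite{DeLellis08a} \S 2.3, and that trick is exactly your argument --- extend $u$ by $u_{0}$ for $t<0$ (where $b\equiv 0$), verify via the cancellation at $t=0$ that the extension $\tilde u$ is a distributional solution on $\left(-\infty,T\right)\times\R^{n}$, apply the local renormalization hypothesis to $\beta\left(\tilde u\right)$, and recover the initial-data term from the region $t<0$ where $\tilde u\equiv u_{0}$. One minor simplification: the cutoff $\chi_{\delta}$ is dispensable, since testing \eqref{renormalization property} directly with the extension $\Phi$ and splitting the integral at $t=0$ already yields $\int_{\R^{n}}\beta\left(u_{0}\right)\phi\left(0,\cdot\right)\rmd x$ by the same fundamental-theorem-of-calculus computation as in your first step.
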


Finally, we give a local statement of Ambrosio's renormalisation result for vector fields of bounded variation (Ambrosio's proof proceeds locally, although the statement he gives is global).

\begin{theorem}[Ambrosio \cite{Ambrosio04} Theorem 3.5]
Let $b\in L^{1}_{\loc}\left(\left(0,T\right)\times\R^{n}\right)$ and extend $b$ to negative time with $b\left(t,\cdot\right)\equiv 0$ for $t<0$.
Let $\Omega \subset \left(-\infty,T\right)\times \R^{n}$ be an open set. If
\begin{align*}
b\vert_{\Omega} \in L^{1}_{\loc}\left(-\infty,T;BV_{\loc}\left(\R^{n}\right)\right)\\
\text{and}\qquad \divergence b \in L^{1}_{\loc}\left(\left(-\infty,T\right)\times \R^{n}\right)
\end{align*}
then $b$ has the local renormalization property on $\Omega$.
\end{theorem}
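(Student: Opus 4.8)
The plan is to follow Ambrosio's adaptation of the DiPerna--Lions commutator method, working throughout locally on $\Omega$ (all convolutions being spatial, and all estimates on subdomains $\Omega'\subset\subset\Omega$ with $\varepsilon$ small enough that the mollification is well defined on the support of the test map). Fix a weak solution $u$ of \eqref{transport equation} and a standard spatial mollifier $\rho_{\varepsilon}$, and set $u_{\varepsilon}=u\ast\rho_{\varepsilon}$. Since $\partial_{t}u=-b\cdot\nabla u$ distributionally, the mollified solution satisfies
\[
\partial_{t}u_{\varepsilon}+b\cdot\nabla u_{\varepsilon}=R_{\varepsilon},\qquad R_{\varepsilon}\define b\cdot\nabla u_{\varepsilon}-\left(b\cdot\nabla u\right)\ast\rho_{\varepsilon},
\]
and as $u_{\varepsilon}$ is smooth in space the chain rule gives, for every $\beta\in C^{1}\left(\R\right)$, $\partial_{t}\beta\left(u_{\varepsilon}\right)+b\cdot\nabla\beta\left(u_{\varepsilon}\right)=\beta'\left(u_{\varepsilon}\right)R_{\varepsilon}$. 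Because $u\in L^{\infty}_{\loc}$ the factor $\beta'\left(u_{\varepsilon}\right)$ is locally bounded and $\beta\left(u_{\varepsilon}\right)\to\beta\left(u\right)$ in $L^{1}_{\loc}$, so testing against $\phi\in C^{\infty}_{c}\left(\Omega\right)$ and letting $\varepsilon\to0$ reduces the entire statement to the single claim that the commutator $R_{\varepsilon}\to0$ in $L^{1}_{\loc}\left(\Omega\right)$: this is exactly the local renormalization identity \eqref{renormalization property}.

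Next I would put the commutator in the classical form by integrating by parts in the convolution, moving the spatial derivative off $u$ onto the mollifier. This yields
\[
R_{\varepsilon}=T_{\varepsilon}+\left(u\,\divergence b\right)\ast\rho_{\varepsilon},\qquad T_{\varepsilon}\left(t,x\right)=\int u\left(t,y\right)\bigl(b\left(t,x\right)-b\left(t,y\right)\bigr)\cdot\left(\nabla\rho_{\varepsilon}\right)\left(x-y\right)\,\rmd y.
\]
The second term converges to $u\,\divergence b$ in $L^{1}_{\loc}$ by the standard mollification estimate, using $\divergence b\in L^{1}_{\loc}$. For $T_{\varepsilon}$ I would exploit the spatial derivative measure $Db\left(t,\cdot\right)$, decomposed as $Db=\nabla b\,\mu_{n}+D^{s}b$ into its absolutely continuous and singular parts. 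On the absolutely continuous part the computation is exactly that of DiPerna \& Lions: the change of variables $y=x-\varepsilon z$ together with $b\left(x\right)-b\left(x-\varepsilon z\right)\approx\varepsilon\,\nabla b\left(x\right)z$ and the identity $\int z_{j}\,\partial_{i}\rho\left(z\right)\,\rmd z=-\delta_{ij}$ give $T_{\varepsilon}\to-u\,\divergence b$, which cancels the divergence term exactly. Hence the absolutely continuous part contributes nothing to the limit of $R_{\varepsilon}$.

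The main obstacle — and the reason Ambrosio's theorem reaches beyond the Sobolev and piecewise-smooth settings — is the singular part $D^{s}b$, concentrated on a $\mu_{n}$-null set, for which the difference quotient entering $T_{\varepsilon}$ admits no pointwise control against a fixed isotropic mollifier. After a blow-up of $Db$ at small scales one bounds the singular contribution to $\limsup_{\varepsilon}\norm{R_{\varepsilon}}_{L^{1}}$ by a quantity $\Lambda\left(\rho,M\right)$ depending only on the chosen mollifier $\rho$ and on the matrix $M$ in the polar decomposition $D^{s}b=M\,\abs{D^{s}b}$, integrated against $\abs{D^{s}b}$. Here I would invoke Alberti's rank-one theorem: $M\left(x\right)$ is a rank-one matrix $M\left(x\right)=\xi\left(x\right)\otimes\eta\left(x\right)$ for $\abs{D^{s}b}$-almost every $x$, so that near the singular set $b$ resembles the `conormal $BV$' fields of Colombini \& Lerner \cite{colombinilerner05}. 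The decisive and most delicate step is then to show that, when $M$ is rank one, the mollifier $\rho$ can be chosen anisotropically — adapted to the rank-one direction $\eta\left(x\right)$ — so as to make $\Lambda\left(\rho,M\right)$ arbitrarily small; the absolute continuity of $\divergence b$, which removes the singular part of the trace of $Db$, is what makes this cancellation consistent. This forces the singular contribution to $R_{\varepsilon}$ to vanish as well, so $R_{\varepsilon}\to0$ in $L^{1}_{\loc}\left(\Omega\right)$, and the reduction of the first paragraph completes the proof that $b$ has the local renormalization property on $\Omega$.
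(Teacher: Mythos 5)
First, a point of order: the paper does not prove this statement at all. It is quoted as Ambrosio \cite{Ambrosio04}, Theorem 3.5, with only the remark that Ambrosio's proof proceeds locally even though his published statement is global. So your proposal can only be measured against Ambrosio's original argument, and on that score it is a faithful reconstruction of its architecture: the reduction to the commutator $R_{\varepsilon}$, the identity $R_{\varepsilon}=T_{\varepsilon}+\left(u\,\divergence b\right)\ast\rho_{\varepsilon}$, the exact cancellation of the absolutely continuous part against the divergence term (legitimate here because $\divergence b\in L^{1}_{\loc}$ forces the singular part of $\operatorname{tr} Db$ to vanish, so the trace of the absolutely continuous density equals $\divergence b$ almost everywhere), the bound on the singular contribution by $\int\Lambda\left(\rho,M\right)\rmd\abs{D^{s}b}$, Alberti's rank-one theorem, and the anisotropic choice of kernel. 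These are precisely the ingredients the paper itself attributes to Ambrosio in its introduction, so in that sense you take ``the same route'' as the cited proof.

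Two caveats remain. Presentationally, your two hardest steps are invoked rather than proved: the commutator bound requires Ambrosio's difference-quotient estimates for $BV$ functions (the increment $b\left(x\right)-b\left(x-\varepsilon z\right)$ does \emph{not} split pointwise into absolutely continuous and singular contributions, so your phrase ``the computation is exactly that of DiPerna \& Lions'' on the a.c.\ part conceals genuine work), and the kernel lemma $\inf_{\rho}\Lambda\left(\rho,M\right)=\abs{\operatorname{tr}M}$ for rank-one $M$ is itself a delicate computation. More substantively, there is a logical wrinkle in your endgame. Since $M\left(x\right)=\xi\left(x\right)\otimes\eta\left(x\right)$ varies with $x$, no single mollifier is adapted to $\eta\left(x\right)$, and since $R_{\varepsilon}$ depends on the chosen $\rho$, your conclusion that ``$R_{\varepsilon}\rightarrow 0$ in $L^{1}_{\loc}\left(\Omega\right)$'' for the fixed kernel you started with does not follow from the bound you state, and it is not what Ambrosio establishes. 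The correct mechanism is that the renormalization defect $\sigma\define\partial_{t}\beta\left(u\right)+b\cdot\nabla\beta\left(u\right)+\divergence b\,\bigl(\beta'\left(u\right)u-\beta\left(u\right)\bigr)$ is a measure that is \emph{independent} of $\rho$, while the commutator estimate gives $\abs{\sigma}\leq \norm{\beta'}_{L^{\infty}}\Lambda\left(\rho,M\left(\cdot\right)\right)\abs{D^{s}b}$ for \emph{every} admissible kernel; running over a countable family of kernels and partitioning with respect to $\abs{D^{s}b}$ moves the infimum inside the integral, after which Alberti's theorem and the trace-free property of $M$ on the singular set give $\sigma=0$. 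So your first paragraph should reduce the theorem not to strong $L^{1}_{\loc}$ convergence of $R_{\varepsilon}$ for a fixed $\rho$, but to the vanishing of this $\rho$-independent defect measure; with that correction your outline coincides with Ambrosio's proof as exposed, for instance, in De Lellis \cite{DeLellis08a}.
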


\subsection{Fractal geometry}\label{section - fractal geometry}
With many evolutionary differential equations, including the transport equation \eqref{transport equation}, it is natural to distinguish between spatial and temporal regularity. This manifests in the Bochner spaces on which the vector field $b$ is defined.

We will make a similar distinction between the spatial and temporal detail of the set of non-BV singularities $S\subset \left[0,T\right] \times \R^{n}$ using some tools of fractal geometry. In particular we will use the codimension print of Robinson \& Sharples \cite{RobinsonSharples13JDE}, which we recall below. This will be particularly useful for singular sets composed of trajectories (such as moving point vortices), which is the content of Theorem \ref{theorem - singular trajectories}.

The familiar Hausdorff and box-counting dimensions (recalled below) fail to encode any anisotropic (i.e. directionally dependent) detail of a set: for example if $C$ is the Cantor `middle half' set, which has Hausdorff and box-counting dimensions equal to $\frac{1}{2}$, then the product set $C\times C\subset \R^{2}$ has Hausdorff and box-counting dimensions equal to $1$ (see Example 7.6 in \cite{BkFalconer03}). Consequently, these standard notions of fractal dimension are unable to distinguish between the product set $C\times C$ and a line segment.

The anisotropic fractal detail of subsets was first considered by Rogers \cite{Rogers88} who adapts the Hausdorff dimension by considering a family of Hausdorff measures $\mathcal{H}^{\alpha}$ on $\R^{n}$ parameterised by $\alpha\in \R_{+}^{n}$, rather than the usual $1$-parameter family of Hausdorff measures. Rogers then encodes the detail of a subset $A\subset \R^{n}$ in a `Hausdorff dimension print', defined as the set of $\alpha\in \R_{+}^{n}$ such that $\mathcal{H}^{\alpha}\left(A\right)>0$.

The codimension print similarly encodes the anisotropic detail of $S$ by considering the integrability of $d_{S}^{-1}$, the reciprocal of the distance function.
\begin{definition}
For a subset $S\subset \left[0,T\right]\times\R^{n}$ the codimension print of $S$ is the subset
\[
\print \left(S\right)\define \set{\left(\alpha,\beta\right)\in (0,\infty]^{2}\  \st  d_{S}^{-1} \in L^{\beta}\left(0,T;L^{\alpha}_{\loc}\left(\R^{n}\right)\right) }.
\]
\end{definition}

\begin{rem}
In the previous definition, $\alpha$ and $\beta$ may belong to the interval $(0,\infty]$. Our main result entails that the exponents belong to $[1,\infty]$, see Theorem \ref{theorem - existence and uniqueness}, so we will throughout use values of $(\alpha,\beta)$ in the range $[1,\infty]$.
\end{rem}
Immediately we see that $\print(S)$ is empty if the $n+1$ dimensional Lebesgue measure $\mu_{n+1}\left(S\right)>0$. Other basic properties of the codimension print are as follows:

\begin{lemma}[Robinson \& Sharples \cite{RobinsonSharples13JDE} Lemma 3.1]\label{lemma - basic codimension properties}
The codimension print reverses inclusions and is invariant under closure of sets, that is for bounded sets
$S,S_{1},S_{2}\subset \left[0,T\right]\times\R^{n}$ 
\begin{align*}
S_{1}\subset S_{2} &\Rightarrow \print\left(S_{2}\right)\subset\print\left(S_{1}\right)& \\
\text{and}\qquad \print\left(\closure\left(S\right)\right) &= \print\left(S\right).
\end{align*}
\end{lemma}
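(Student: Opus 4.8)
The plan is to reduce both claims to elementary pointwise properties of the distance function $d_S$ and then transfer these to the Bochner norms defining $\print(S)$.

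First, for the inclusion-reversal, I would observe that enlarging a set can only decrease the distance to it: if $S_1\subset S_2$ then for every $(t,x)$ the infimum defining $d_{S_2}(t,x)$ is taken over a larger collection, so $d_{S_2}(t,x)\leq d_{S_1}(t,x)$, and hence $0\leq d_{S_1}^{-1}(t,x)\leq d_{S_2}^{-1}(t,x)$ wherever these reciprocals are defined. Both reciprocals are nonnegative, so the inequality is preserved by the (monotone) $L^{\alpha}_{\loc}(\R^{n})$-norm on each compact set and then by the $L^{\beta}(0,T;\cdot)$-norm. Consequently, if $d_{S_2}^{-1}\in L^{\beta}(0,T;L^{\alpha}_{\loc}(\R^{n}))$ then so is the dominated function $d_{S_1}^{-1}$; that is, $(\alpha,\beta)\in\print(S_2)$ implies $(\alpha,\beta)\in\print(S_1)$, giving $\print(S_2)\subset\print(S_1)$.

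For the closure invariance, the key step is to show the stronger pointwise identity $d_S=d_{\closure(S)}$. The inequality $d_{\closure(S)}\leq d_S$ is immediate from the inclusion $S\subset\closure(S)$ and the monotonicity just used. For the reverse inequality I would fix $(t,x)$ and approximate any $(s,y)\in\closure(S)$ by a sequence $(s_k,y_k)\in S$ with $(s_k,y_k)\to(s,y)$; continuity of the Euclidean norm gives $d_S(t,x)\leq\abs{(t,x)-(s_k,y_k)}\to\abs{(t,x)-(s,y)}$, and taking the infimum over $(s,y)\in\closure(S)$ yields $d_S(t,x)\leq d_{\closure(S)}(t,x)$. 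Since $d_S=d_{\closure(S)}$ pointwise, their reciprocals coincide and therefore lie in exactly the same spaces $L^{\beta}(0,T;L^{\alpha}_{\loc}(\R^{n}))$, so $\print(\closure(S))=\print(S)$.

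I do not expect a genuine obstacle here, as the argument is entirely soft. The only points requiring a little care are that the reciprocal $d_S^{-1}$ is defined almost everywhere (it is, since a nonempty print forces $\mu_{n+1}(S)=0$, hence $d_S>0$ a.e.) and that the domination argument in the first part really does respect the iterated Bochner norm, which holds precisely because the functions involved are nonnegative. Boundedness of the sets plays no essential role beyond ensuring the distance functions are finite.
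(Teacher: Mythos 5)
Your proof is correct: the paper does not prove this lemma itself but cites it from Robinson \& Sharples \cite{RobinsonSharples13JDE}, and your argument via the pointwise monotonicity $S_{1}\subset S_{2}\Rightarrow d_{S_{2}}\leq d_{S_{1}}$ together with the identity $d_{S}=d_{\closure\left(S\right)}$ is exactly the standard (and the cited reference's) route. No gaps; the domination and a.e.-finiteness points you flag are handled correctly.
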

The reversal of inclusions property justifies the use of the term `codimension' as this property is shared by the more familiar codimensions $n-\dim A$ for $A\subset\R^{n}$, which appears in the `Minkowski Sausage' formulation of the box-counting dimension (see below).

Computing the codimension print of even elementary sets can be quite involved (see Robinson \& Sharples \cite{RobinsonSharples13JDE} Example 3.5 for the codimension print of a singleton set). However, a portion of the codimension print can be recovered from the more elementary box-counting dimensions of the set, together with its projections.

\begin{definition}\label{def:box}
The upper and lower box-counting dimensions of a bounded set $A\subset \R^{n}$ are given by
\begin{align*}
\dim_{\rm B}A &\define \limsup_{\varepsilon\rightarrow 0} \frac{\log N\!\left(A,\varepsilon\right)}{-\log\varepsilon}\\
\dim_{\rm LB}A &\define \liminf_{\varepsilon\rightarrow 0} \frac{\log N\!\left(A,\varepsilon\right)}{-\log\varepsilon}
\end{align*}
respectively, where $N\left(A,\varepsilon\right)$ is the smallest number of sets with diameter at most $\varepsilon$ that form a cover of $A$, or one of many similar quantities which give an equivalent definition (discussed in Falconer \cite{BkFalconer03} \S 3.1 `Equivalent Definitions').
\end{definition}

Another useful formulation is given in terms of the Lebesgue measure of the $\varepsilon$-neighbourhoods of $A$
\[
\set{d_{A} < \varepsilon} \define \set{x\in\R^{n} \st d_{A}(x)< \varepsilon}.
\]

\begin{lemma}[`Minkowski Sausage' formulation]\label{lemma - Minkowski Sausage}
The upper and lower box-counting dimensions of a bounded set $A\subset \R^{n}$ are given by
\begin{align*}
\dim_{\rm B}A &= n-\liminf_{\varepsilon\rightarrow 0} \frac{\log \mu_{n}\left(\set{d_{A}<\varepsilon}\right)}{\log\varepsilon}\\
\dim_{\rm LB}A &= n-\limsup_{\varepsilon\rightarrow 0} \frac{\log \mu_{n}\left(\set{d_{A}<\varepsilon}\right)}{\log\varepsilon}
\end{align*}
\end{lemma}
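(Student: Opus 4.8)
The plan is to establish the result from two elementary volume-comparison inequalities relating the covering number $N(A,\varepsilon)$ to the Lebesgue measure of the neighbourhood $A_{\varepsilon}\define \set{d_{A}<\varepsilon}$, and then to track these inequalities through the $\liminf$/$\limsup$ in the definitions, taking care with the sign of $\log\varepsilon$. Since the various definitions of $N(A,\varepsilon)$ all yield the same box-counting dimensions (Falconer \cite{BkFalconer03} \S 3.1), I am free to work with $N_{\rm b}(A,\varepsilon)$, the least number of closed balls of radius $\varepsilon$ needed to cover $A$, writing $\omega_{n}\define \mu_{n}(B(0,1))$.

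First I would prove the upper bound $\mu_{n}(A_{\varepsilon})\le 2^{n}\omega_{n}\varepsilon^{n}N_{\rm b}(A,\varepsilon)$: if $B(y_{1},\varepsilon),\dots,B(y_{N},\varepsilon)$ cover $A$ with $N=N_{\rm b}(A,\varepsilon)$, then any point with $d_{A}<\varepsilon$ lies within $2\varepsilon$ of some $y_{i}$, so $A_{\varepsilon}\subset \bigcup_{i}B(y_{i},2\varepsilon)$ and the measure bound follows. Conversely I would prove a matching lower bound by a packing argument: take a maximal $2\varepsilon$-separated subset $\set{x_{1},\dots,x_{M}}$ of $A$; the balls $B(x_{i},\varepsilon)$ are disjoint and lie in $A_{\varepsilon}$, giving $M\omega_{n}\varepsilon^{n}\le \mu_{n}(A_{\varepsilon})$, while maximality forces $B(x_{1},2\varepsilon),\dots,B(x_{M},2\varepsilon)$ to cover $A$, so that $N_{\rm b}(A,2\varepsilon)\le M$ and hence $\omega_{n}\varepsilon^{n}N_{\rm b}(A,2\varepsilon)\le \mu_{n}(A_{\varepsilon})$.

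Next I would take logarithms and divide by $\log\varepsilon$, which is negative for small $\varepsilon$ and therefore reverses the inequalities. Writing $f(\varepsilon)\define \frac{\log\mu_{n}(A_{\varepsilon})}{\log\varepsilon}$, the two bounds become $n-\frac{\log N_{\rm b}(A,\varepsilon)}{-\log\varepsilon}+o(1)\le f(\varepsilon)\le n-\frac{\log N_{\rm b}(A,2\varepsilon)}{-\log\varepsilon}+o(1)$ as $\varepsilon\to 0$, where the error terms absorb the constants $2^{n}\omega_{n}$ and $\omega_{n}$. Because replacing $\varepsilon$ by $2\varepsilon$ alters neither the $\liminf$ nor the $\limsup$ of $\frac{\log N_{\rm b}}{-\log\varepsilon}$, and this quantity has the same extremal limits as $\frac{\log N(A,\varepsilon)}{-\log\varepsilon}$ by the equivalence of definitions, taking $\liminf$ of both inequalities pins down $\liminf_{\varepsilon\to 0}f(\varepsilon)=n-\dim_{\rm B}A$, and taking $\limsup$ gives $\limsup_{\varepsilon\to 0}f(\varepsilon)=n-\dim_{\rm LB}A$; rearranging yields the two stated identities.

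The geometric comparison inequalities are entirely routine, so the only point requiring genuine care is the bookkeeping in the final step: because $\log\varepsilon<0$ the inequalities flip, which is precisely why the \emph{upper} box dimension (a $\limsup$ of covering numbers) corresponds to the \emph{liminf} of the volume ratio $f$, and conversely. Keeping the direction of the $2\varepsilon$-shift aligned with the correct one of $\liminf$/$\limsup$ is the main place an error could creep in, but no deeper difficulty arises.
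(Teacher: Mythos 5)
Your proof is correct. Note, though, that the paper does not actually prove this lemma in-line: its ``proof'' is simply a citation to Falconer \cite{BkFalconer03} \S 3.1, and the argument you give is essentially the standard one recorded there (the covering/packing comparison in Falconer's Proposition 3.2). So what you have done is supply, correctly, the content that the paper outsources to the reference: the two-sided volume estimate $\omega_{n}\varepsilon^{n}N_{\rm b}(A,2\varepsilon)\leq \mu_{n}\left(\set{d_{A}<\varepsilon}\right)\leq 2^{n}\omega_{n}\varepsilon^{n}N_{\rm b}(A,\varepsilon)$, followed by the logarithmic bookkeeping in which the negativity of $\log\varepsilon$ swaps $\liminf$ and $\limsup$, which is exactly the subtle point you flag. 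Two cosmetic remarks: in the packing step the \emph{closed} balls $B(x_{i},\varepsilon)$ need not lie in the strict neighbourhood $\set{d_{A}<\varepsilon}$ (points at distance exactly $\varepsilon$ may be excluded), so you should use the open balls, whose measure is the same; and disjointness of the packing balls may fail on a measure-zero set of tangency points under the ``$\geq 2\varepsilon$-separated'' convention, which again does not affect the measure inequality. Neither issue is a genuine gap.
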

\begin{proof}
See Falconer \cite{BkFalconer03} \S 3.1 `Equivalent Definitions'.
\end{proof}

The box-counting dimension of $S$ gives some of the `isotropic' component of $\print(S)$:
\begin{theorem}\label{theorem - isotropic print}
For a bounded subset $S\subset \left[0,T\right]\times \R^{n}$
\begin{align*}
\alpha &< n+1 -\dim_{\rm B}S & &\Rightarrow & \left(\alpha,\alpha\right) &\in \print(S) \\
\alpha &> n+1 -\dim_{\rm LB}S & &\Rightarrow & \left(\alpha,\alpha\right) &\notin \print(S) \\
\end{align*}
\end{theorem}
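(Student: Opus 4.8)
The plan is to reduce the diagonal membership question to an ordinary local integrability statement. For equal exponents the Bochner space collapses: by Tonelli's theorem $d_S^{-1}\in L^{\alpha}(0,T;L^{\alpha}_{\loc}(\R^{n}))$ if and only if $d_S^{-\alpha}\in L^{1}_{\loc}\!\left((0,T)\times\R^{n}\right)$, so $(\alpha,\alpha)\in\print(S)$ is equivalent to the local integrability of $d_S^{-\alpha}$ over the slab $(0,T)\times\R^{n}$. Fixing a bounded set $K\subset(0,T)\times\R^{n}$ containing a neighbourhood of the compact set $S$, and writing $M(\varepsilon)\define\mu_{n+1}\!\left(\set{d_S<\varepsilon}\right)$, I would rewrite the integral by the layer-cake formula and substitute $\varepsilon=\lambda^{-1/\alpha}$ to obtain
\[
\int_{K} d_S^{-\alpha}\,\rmd\mu_{n+1}
= \alpha\int_{0}^{\infty}\mu_{n+1}\!\left(K\cap\set{d_S<\varepsilon}\right)\varepsilon^{-\alpha-1}\,\rmd\varepsilon .
\]
Since $\alpha>0$, the tail $\varepsilon\geq\varepsilon_{0}$ contributes at most $\alpha\,\mu_{n+1}(K)\int_{\varepsilon_{0}}^{\infty}\varepsilon^{-\alpha-1}\rmd\varepsilon<\infty$, so convergence of the whole integral is governed entirely by the behaviour of $M(\varepsilon)$ as $\varepsilon\to 0$.

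The behaviour of $M(\varepsilon)$ is exactly what the Minkowski sausage formulation (Lemma \ref{lemma - Minkowski Sausage}, applied in $\R^{n+1}$) controls. Setting $s\define n+1-\dim_{\rm B}S$ and $s'\define n+1-\dim_{\rm LB}S$, these read $\liminf_{\varepsilon\to0}\frac{\log M(\varepsilon)}{\log\varepsilon}=s$ and $\limsup_{\varepsilon\to0}\frac{\log M(\varepsilon)}{\log\varepsilon}=s'$. Because $\log\varepsilon<0$, the definitions of $\liminf$ and $\limsup$ give, for every $\delta>0$ and all sufficiently small $\varepsilon$, the bounds $M(\varepsilon)<\varepsilon^{s-\delta}$ and $M(\varepsilon)>\varepsilon^{s'+\delta}$ respectively. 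For the first implication, if $\alpha<s$ I choose $\delta$ with $\alpha<s-\delta$; inserting the upper bound leaves the convergent integral $\int_{0}^{\varepsilon_{0}}\varepsilon^{s-\delta-\alpha-1}\,\rmd\varepsilon$ (convergent since $s-\delta-\alpha>0$), so $d_S^{-\alpha}$ is locally integrable and $(\alpha,\alpha)\in\print(S)$. For the second, if $\alpha>s'$ I choose $\delta$ with $\alpha>s'+\delta$; inserting the lower bound yields the divergent integral $\int_{0}^{\varepsilon_{0}}\varepsilon^{s'+\delta-\alpha-1}\,\rmd\varepsilon$, so $d_S^{-\alpha}$ fails local integrability and $(\alpha,\alpha)\notin\print(S)$.

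The one genuinely technical point, which I expect to be the main obstacle, is that Lemma \ref{lemma - Minkowski Sausage} measures the full neighbourhood $\set{d_S<\varepsilon}$ in $\R^{n+1}$, whereas the print only integrates over the slab $(0,T)\times\R^{n}$. For the convergent direction this is harmless, since $\mu_{n+1}(K\cap\set{d_S<\varepsilon})\leq M(\varepsilon)$ preserves the upper bound. For the divergent direction I must verify that restricting the sausage to the slab loses at most a fixed proportion of its measure, that is $\mu_{n+1}(K\cap\set{d_S<\varepsilon})\geq c\,M(\varepsilon)$ for small $\varepsilon$ and some $c>0$ independent of $\varepsilon$; this holds because intersecting a ball with either half-space $\set{t>0}$ or $\set{t<T}$ retains at least a constant fraction of its volume, and for small $\varepsilon$ each point of the compact $S$ lies near at most one of these two boundaries. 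Beyond this, the proof requires only the routine bookkeeping of the strict inequalities in the hypotheses against the $\delta$'s produced by the $\liminf$/$\limsup$, together with a reminder that $\print(S)$ is empty when $\mu_{n+1}(S)>0$ so that the $M(\varepsilon)\to0$ regime is the relevant one.
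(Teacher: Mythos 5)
Your argument is correct in outline, and it is necessarily ``different from the paper's'' in a degenerate sense: the paper gives no proof of this theorem at all, just the one-line citation ``Follows from Remark 1 of \cite{Aizenman78a}.'' What you have written is a self-contained reconstruction of exactly what that citation stands for: the Tonelli collapse of the diagonal of the print to integrability of $d_S^{-\alpha}$ over cylinders, the layer-cake identity, and the Minkowski-sausage asymptotics of Lemma \ref{lemma - Minkowski Sausage} applied in $\R^{n+1}$. Your handling of the $\liminf$/$\limsup$ conversions (including the sign flip from $\log\varepsilon<0$) and of the split between the tail and the $\varepsilon\to0$ regime is correct.

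Two repairs are needed, one cosmetic and one genuine. Cosmetic: a bounded set $K\subset(0,T)\times\R^{n}$ cannot contain a neighbourhood of $S$ when $S$ touches $\set{t=0}$ or $\set{t=T}$; you should take $K=(0,T)\times K'$ with $K'\subset\R^{n}$ compact containing a neighbourhood of the spatial projection of $S$, observing that membership in $\print(S)$ asks for integrability over such full-time cylinders (this is slightly stronger than $L^{1}_{\loc}$ of the open slab, and your layer-cake computation applies verbatim to these cylinders). Genuine: your justification of the slab lower bound $\mu_{n+1}\left(K\cap\set{d_S<\varepsilon}\right)\geq c\,M(\varepsilon)$ does not work as stated, because per-ball volume fractions give no lower bound on the measure of a \emph{union} of overlapping balls; knowing that each ball $B(p,\varepsilon)$, $p\in S$, keeps half its volume inside the slab says nothing about what fraction of $\bigcup_{p\in S}B(p,\varepsilon)$ lies inside the slab. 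The claim you need is nevertheless true, and the clean fix is a reflection argument: since $S\subset\set{0\leq t\leq T}$, the reflection $(t,x)\mapsto(-t,x)$ does not increase the distance to $S$, so it maps $\set{d_S<\varepsilon}\cap\set{t<0}$ injectively and measure-preservingly into $\set{d_S<\varepsilon}\cap\set{0<t<\varepsilon}$, and similarly for reflection across $t=T$; hence for $\varepsilon<T$ the open slab retains at least one third of $M(\varepsilon)$. With that substitution, your divergence direction is complete and the whole proof stands.
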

\begin{proof}
Follows from Remark 1 of \cite{Aizenman78a}.
\end{proof}

For product sets we can get some of the `anisotropic' component of $\print(S)$ from the box-counting dimensions of the component sets:

\begin{theorem}[Robinson \& Sharples \cite{RobinsonSharples13JDE} Theorem 3.4]\label{theorem - codimension product print}
For bounded subsets $\mathcal{T}\subset \left[0,T\right]$ and $A\subset \R^{n}$ the point $\left(\alpha,\beta\right)\in \print \left(\mathcal{T}\times A\right)$ if one of the following conditions holds:
\begin{itemize}
\item $\alpha < n-\dim_{\rm B}A$,
\item $\beta<1-\dim_{\rm B}\mathcal{T}$,
\item $\alpha\beta<\alpha\left(1-\dim_{\rm B}\mathcal{T}\right)+\beta\left(n-\dim_{\rm B}A\right)$.
\end{itemize}
Further, the point $\left(\alpha,\beta\right)\notin \print \left(\mathcal{T}\times A\right)$ if
\begin{itemize}
 \item $\alpha\beta>\alpha\left(1-\dim_{\rm LB}\mathcal{T}\right)+\beta\left(n-\dim_{\rm LB}A\right)$.
\end{itemize}
\end{theorem}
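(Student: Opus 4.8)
The plan is to exploit the fact that the Euclidean distance to a \emph{product} set factors. For any $(s,y)\in\mathcal{T}\times A$ we have $\abs{(t,x)-(s,y)}^{2}=\abs{t-s}^{2}+\abs{x-y}^{2}$, and since the infima over $s\in\mathcal{T}$ and $y\in A$ decouple,
\[
d_{\mathcal{T}\times A}(t,x)=\left(d_{\mathcal{T}}(t)^{2}+d_{A}(x)^{2}\right)^{1/2}.
\]
In particular, up to the constant $\sqrt{2}$, the reciprocal distance equals $\min\set{d_{\mathcal{T}}(t)^{-1},\,d_{A}(x)^{-1}}$. I would therefore reduce membership $(\alpha,\beta)\in\print(\mathcal{T}\times A)$ to the finiteness, for each compact $K\subset\R^{n}$, of
\[
\int_{0}^{T} I(t)^{\beta/\alpha}\,\rmd t,\qquad I(t)\define \int_{K}\min\set{d_{\mathcal{T}}(t)^{-\alpha},\,d_{A}(x)^{-\alpha}}\,\rmd x,
\]
so that the problem splits into an inner spatial integral governed by $A$ and an outer temporal integral governed by $\mathcal{T}$.

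For the inner integral I would fix $\eta>0$ and use the upper estimate $\mu_{n}(\set{d_{A}<\varepsilon})\leq \varepsilon^{\,n-\dim_{\rm B}A-\eta}$ for small $\varepsilon$, which follows from the Minkowski Sausage formulation (Lemma \ref{lemma - Minkowski Sausage}) and the definition of the $\liminf$. Splitting $K$ into the region $\set{d_{A}\geq d_{\mathcal{T}}(t)}$, where the minimum is $d_{A}(x)^{-\alpha}$, and its complement, where the minimum is $d_{\mathcal{T}}(t)^{-\alpha}$, a dyadic decomposition in the value of $d_{A}$ on the former and a direct application of the sausage bound on the latter gives $I(t)\lesssim \max\set{1,\,d_{\mathcal{T}}(t)^{\,(n-\dim_{\rm B}A)-\alpha-\eta}}$. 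When $\alpha<n-\dim_{\rm B}A$ the inner integral is bounded uniformly in $t$, and the outer integral converges for every $\beta$; this is exactly the first listed condition.

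In the remaining regime $\alpha\geq n-\dim_{\rm B}A$ the inner integral decays like a negative power of $d_{\mathcal{T}}(t)$, so inserting the bound leaves an integral of the form $\int_{0}^{T}d_{\mathcal{T}}(t)^{-\gamma}\,\rmd t$ with $\gamma=(\alpha-(n-\dim_{\rm B}A)+\eta)\beta/\alpha$. Applying the one-dimensional Minkowski Sausage estimate to $\mathcal{T}\subset[0,T]$ shows this is finite provided $\gamma<1-\dim_{\rm B}\mathcal{T}$, and letting $\eta\to0$ this rearranges precisely to $\alpha\beta<\alpha(1-\dim_{\rm B}\mathcal{T})+\beta(n-\dim_{\rm B}A)$, the third condition; the second condition $\beta<1-\dim_{\rm B}\mathcal{T}$ is the subcase in which the temporal integral of any bounded negative power of $d_{\mathcal{T}}$ already converges. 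Because every hypothesis is a strict inequality, the arbitrarily small dimensional losses $\eta$ (one at each integration) can be absorbed.

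For the nonmembership claim I would reverse the inequalities using the \emph{lower} box-counting dimension. Here the sausage formula only yields the lower bounds $\mu_{n}(\set{d_{A}<\varepsilon_{j}})\geq \varepsilon_{j}^{\,n-\dim_{\rm LB}A+\eta}$, and the analogue for $\mathcal{T}$, along sequences $\varepsilon_{j}\to 0$ realising the respective $\limsup$'s. Restricting the temporal integral to a neighbourhood of $\mathcal{T}$ on which $d_{\mathcal{T}}(t)$ is comparable to such a scale, and lower-bounding $I(t)$ on the corresponding part of $K$, should produce a divergent integral whenever $\alpha\beta>\alpha(1-\dim_{\rm LB}\mathcal{T})+\beta(n-\dim_{\rm LB}A)$. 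I expect the main obstacle to lie precisely in this negative direction: the lower bounds hold only along (a priori different) sequences of scales for $A$ and for $\mathcal{T}$, so the delicate point is to select a single sequence of scales that simultaneously witnesses both lower bounds and feeds into a genuinely divergent double integral. The positive direction is, by comparison, a routine bookkeeping of the $\eta$-losses across the two integrations, done so as to use the strictness of the hypotheses.
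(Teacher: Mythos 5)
First, a point of reference: the paper does not prove this statement at all --- it is imported verbatim from Robinson \& Sharples \cite{RobinsonSharples13JDE} (their Theorem 3.4) and used as a black box. So your attempt can only be judged on its own merits, not against an internal proof.

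Your positive (membership) direction is correct. The factorisation $d_{\mathcal{T}\times A}=\left(d_{\mathcal{T}}^{2}+d_{A}^{2}\right)^{1/2}$, hence the two-sided comparison of $d_{\mathcal{T}\times A}^{-1}$ with $\min\set{d_{\mathcal{T}}^{-1},d_{A}^{-1}}$, is exactly the right starting point; the dyadic estimate of the inner integral giving $I(t)\lesssim\max\set{1,\,d_{\mathcal{T}}(t)^{\,(n-\dim_{\rm B}A)-\alpha-\eta}}$ is valid (the upper box dimension enters through a measure bound $\mu_{n}(\set{d_{A}<\varepsilon})\leq\varepsilon^{\,n-\dim_{\rm B}A-\eta}$ that holds for \emph{all} small $\varepsilon$); and the bookkeeping of the two $\eta$-losses against the strict hypotheses delivers all three sufficient conditions, including the second one since $\gamma\leq\beta(\alpha+\eta)/\alpha$ when $n-\dim_{\rm B}A\geq 0$.

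The issue is your assessment of the negative direction, although the error is in your favour: the ``main obstacle'' you identify does not exist. You claim the sausage formulation only yields lower bounds $\mu_{n}(\set{d_{A}<\varepsilon_{j}})\geq\varepsilon_{j}^{\,n-\dim_{\rm LB}A+\eta}$ along sequences realising the limsup, so that two sequences (one for $A$, one for $\mathcal{T}$) would have to be aligned. This is a misreading of how Lemma \ref{lemma - Minkowski Sausage} inverts. Since $\limsup_{\varepsilon\to0}\log\mu_{n}(\set{d_{A}<\varepsilon})/\log\varepsilon=n-\dim_{\rm LB}A$ and $\log\varepsilon<0$, the \emph{eventual upper bound} on the ratio --- which, by the definition of limsup, holds for all sufficiently small $\varepsilon$ --- flips into the lower bound $\mu_{n}(\set{d_{A}<\varepsilon})\geq\varepsilon^{\,n-\dim_{\rm LB}A+\eta}$ for \emph{all} sufficiently small $\varepsilon$, not merely along a sequence; the same holds for $\mathcal{T}$. (Sequences are only forced on you for the opposite pairing, e.g.\ upper measure bounds in terms of $\dim_{\rm LB}$, which you never need here.) Consequently the divergence argument is routine: taking $K$ to contain a neighbourhood of $A$, for every $t$ with $d_{\mathcal{T}}(t)$ small one has $I(t)\gtrsim d_{\mathcal{T}}(t)^{-\alpha}\,\mu_{n}\left(\set{d_{A}\leq d_{\mathcal{T}}(t)}\right)\geq d_{\mathcal{T}}(t)^{-\gamma\alpha/\beta}$ with $\gamma\define\bigl(\alpha-(n-\dim_{\rm LB}A)-\eta\bigr)\beta/\alpha$, and then for every small $\delta$,
\[
\int_{0}^{T}I(t)^{\beta/\alpha}\,\rmd t\ \geq\ \int_{\set{d_{\mathcal{T}}<\delta}}d_{\mathcal{T}}(t)^{-\gamma}\,\rmd t\ \geq\ \delta^{-\gamma}\,\mu_{1}\left(\set{d_{\mathcal{T}}<\delta}\right)\ \geq\ \delta^{\,1-\dim_{\rm LB}\mathcal{T}+\eta-\gamma},
\]
which blows up as $\delta\to0$ because the hypothesis $\alpha\beta>\alpha(1-\dim_{\rm LB}\mathcal{T})+\beta(n-\dim_{\rm LB}A)$ gives $\gamma>1-\dim_{\rm LB}\mathcal{T}+\eta$ for $\eta$ small (and also forces $\gamma>0$). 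A single scale $\delta$, sent to zero, witnesses divergence; no selection or synchronisation of sequences is required. With this correction your sketch closes up into a complete proof.
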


We remark that this theorem does not completely supersede that of Theorem \ref{theorem - isotropic print}: for the line $\alpha=\beta$ it follows from Theorem 1.7 that
\begin{align*}
\alpha &< 1+n-\left(\dim_{\rm B}\mathcal{T} + \dim_{\rm B}A \right)&  &\Rightarrow  &\left(\alpha,\alpha\right)&\in \print(S) \\
\alpha &> 1+n-\left(\dim_{\rm LB}\mathcal{T} + \dim_{\rm LB}A \right)&  &\Rightarrow  &\left(\alpha,\alpha\right)&\notin \print(S)
\end{align*}
which is weaker than Theorem \ref{theorem - isotropic print} as the box-counting product inequalities
\[
\dim_{\rm LB} \mathcal{T} + \dim_{\rm LB} A \leq \dim_{\rm LB} \left(\mathcal{T}\times A\right) \leq \dim_{\rm B} \left(\mathcal{T}\times A\right) \leq \dim_{\rm B} \mathcal{T} + \dim_{\rm B} A
\]
can be strict (see, Robinson \& Sharples \cite{RobinsonSharples13RAEX} and Example 3.6 of \cite{RobinsonSharples13JDE}).

Finally, we interpret this product set result in terms of the projections of the set $S$.
\begin{corollary}
For a bounded subset $S\subset \left[0,T\right]\times \R^{n}$ the point $\left(\alpha,\beta\right) \in \print\left(S\right)$ if one of the following holds:
\begin{itemize}
\item $\alpha < n-\dim_{\rm B}P_{x}(S)$,
\item $\beta<1-\dim_{\rm B}P_{t}(S)$,
\item $\alpha\beta<\alpha\left(1-\dim_{\rm B}P_{t}(S)\right)+\beta\left(n-\dim_{\rm B}P_{x}(S)\right)$.
\end{itemize}
where $P_{t}(S)$ and $P_{x}(S)$ are the temporal and spatial projections of $S$ respectively.
\end{corollary}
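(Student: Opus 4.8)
The plan is to deduce this corollary from the product-set result of Theorem \ref{theorem - codimension product print} combined with the inclusion-reversal property recorded in Lemma \ref{lemma - basic codimension properties}. The key observation is that every set is contained in the product of its projections: any point $\left(t,x\right)\in S$ satisfies $t\in P_{t}(S)$ and $x\in P_{x}(S)$, whence
\[
S \subset P_{t}(S) \times P_{x}(S).
\]
Since $S$ is bounded, both projections $P_{t}(S)\subset\left[0,T\right]$ and $P_{x}(S)\subset\R^{n}$ are bounded, so the product set on the right is a bounded subset of $\left[0,T\right]\times\R^{n}$ of exactly the type handled by Theorem \ref{theorem - codimension product print}.

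First I would invoke Lemma \ref{lemma - basic codimension properties}: because the codimension print reverses inclusions, the containment above gives
\[
\print\bigl(P_{t}(S)\times P_{x}(S)\bigr) \subset \print(S).
\]
Thus it suffices to show that each of the three stated conditions forces $\left(\alpha,\beta\right)$ to lie in $\print\bigl(P_{t}(S)\times P_{x}(S)\bigr)$. I would then apply Theorem \ref{theorem - codimension product print} directly, taking $\mathcal{T}=P_{t}(S)$ and $A=P_{x}(S)$. Its three sufficient conditions read $\alpha<n-\dim_{\rm B}P_{x}(S)$, $\beta<1-\dim_{\rm B}P_{t}(S)$, and $\alpha\beta<\alpha\left(1-\dim_{\rm B}P_{t}(S)\right)+\beta\left(n-\dim_{\rm B}P_{x}(S)\right)$, which coincide verbatim with the three hypotheses of the corollary. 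Hence under any one of them $\left(\alpha,\beta\right)\in\print\bigl(P_{t}(S)\times P_{x}(S)\bigr)$, and by the inclusion above $\left(\alpha,\beta\right)\in\print(S)$, completing the argument.

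There is no genuine obstacle here: the entire content lies in the elementary observation $S\subset P_{t}(S)\times P_{x}(S)$, after which the result is a mechanical combination of two previously established facts. The only point meriting a moment's care is verifying that the projections are bounded so that Theorem \ref{theorem - codimension product print} is applicable, which is immediate from the boundedness of $S$. I would also note that, just as Theorem \ref{theorem - codimension product print} does not fully supersede the isotropic estimate of Theorem \ref{theorem - isotropic print}, the bound obtained here via the projections may be strictly weaker than what one could extract from the full set $S$, since the dimensions of the projections can exceed the corresponding components of $\dim_{\rm B}S$.
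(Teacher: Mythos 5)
Your proof is correct and follows exactly the same route as the paper's: the inclusion $S\subset P_{t}(S)\times P_{x}(S)$, the inclusion-reversal property of Lemma \ref{lemma - basic codimension properties}, and the product-set result of Theorem \ref{theorem - codimension product print} applied with $\mathcal{T}=P_{t}(S)$ and $A=P_{x}(S)$. The paper's own proof is a one-line citation of precisely these three ingredients, so your write-up is simply a fuller version of the same argument.
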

\begin{proof}
Follows from the inclusion $S\subset P_{t}(S) \times P_{x}(S)$, Lemma \ref{lemma - basic codimension properties} and Theorem \ref{theorem - codimension product print}.
\end{proof}

\subsection{Avoidance}\label{section - avoidance}
In the classical framework for ODEs, Aizenman \cite{Aizenman78a} considered vector fields $b$ that are smooth (or Lipschitz) on the complement of some singular set.

In general there is no flow solution of \eqref{ODE} in this setting as typically some trajectories will intersect $S$. However if almost every trajectory does not intersect (the closure of) $S$ then, as the trajectories are unique and defined for all time, this aggregate of trajectories gives a unique flow solution defined almost everywhere.

To formalise this argument, Aizenman considered an aggregate of `local' trajectories together with their existence times, and provided conditions for almost every `local' trajectory to avoid the set $S$.

As we will obtain existence of solutions using the renormalisation methods, we can instead for convenience define avoidance of sets in terms of an existing regular Lagrangian flow:

\begin{definition}
A regular Lagrangian flow $X\colon \left[0,T\right]\times \R^{n}\rightarrow \R^{n}$ avoids a closed set $S\subset \left[0,T\right]\times \R^{n}$ if
\[
\mu_{n}\left(\set{x\in \R^{n}\st  \left(t,X\left(t,x\right)\right)\in S\ \text{for some}\ t\in\left[0,T\right]}\right) = 0.
\]
\end{definition}

In these terms, Aizenman proved the following `autonomous' avoidance result:
\begin{theorem}[Aizenman \cite{Aizenman78a}]
Let $b\in L^{q}_{\loc}\left(\R^{n}\right)$. A regular Lagrangian flow solution $X\colon \left[0,T\right]\times \R^{n} \rightarrow \R^{n}$ of \eqref{ODE} avoids a set $\left[0,T\right]\times A$ if
\[
\frac{1}{q} + \frac{1}{n-\dim_{\rm B} A} < 1.
\]
\end{theorem}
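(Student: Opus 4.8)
The plan is to show that the set of starting points whose trajectory meets $A$,
\[
E \define \set{x\in\R^{n} \st X\left(t,x\right)\in A \ \text{for some}\ t\in\left[0,T\right]},
\]
has zero measure (note that $\left(t,X\left(t,x\right)\right)\in\left[0,T\right]\times A$ is equivalent to $X\left(t,x\right)\in A$). Since $\dim_{\rm B}A<n$ forces $\mu_{n}\left(A\right)=0$, it is enough to control trajectories that reach $A$ from a positive distance, and to exploit that such a trajectory must cross every thin shell $\set{\varepsilon\leq d_{A}<2\varepsilon}$ surrounding $A$. The whole argument rests on balancing two competing estimates on these shells, one from the compressibility constant $L$ and one from the $L^{q}$ integrability of $b$, tuned so that the exponents reproduce the hypothesis $\frac{1}{q}+\frac{1}{n-\dim_{\rm B}A}<1$.

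First I would record the two global bounds coming from \eqref{compressibility constant}. Fix a ball $B_{R}\supset A$. For the shell indicator $\chi_{\varepsilon}=\chi_{\set{\varepsilon\leq d_{A}<2\varepsilon}}$, Fubini and \eqref{compressibility constant} give
\[
\int_{\R^{n}} \int_{0}^{T} \chi_{\varepsilon}\left(X\left(t,x\right)\right)\rmd t\,\rmd x \leq L T\, \mu_{n}\!\left(\set{d_{A}<2\varepsilon}\right),
\]
so the total time spent by trajectories in the shell is controlled by its volume. Similarly, setting $J_{R}\left(x\right)\define \int_{0}^{T} \abs{b\left(X\left(t,x\right)\right)}^{q}\chi_{B_{R}}\left(X\left(t,x\right)\right)\rmd t$, the same computation yields $\int_{\R^{n}}J_{R}\leq LT\norm{b}_{L^{q}\left(B_{R}\right)}^{q}<\infty$, so $J_{R}<\infty$ almost everywhere. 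This localisation by $B_{R}$ is what makes the $L^{q}_{\loc}$ hypothesis usable.

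The key step is a lower bound on the time a trajectory spends in the shell. If $x\in E$ starts outside the shell (i.e. $d_{A}\left(x\right)>2\varepsilon$) then, for small $\varepsilon$, the crossing takes place inside $B_{R}$ and the $1$-Lipschitz function $t\mapsto d_{A}\left(X\left(t,x\right)\right)$ decreases by at least $\varepsilon$ over some crossing interval of length $\tau$; since $\abs{\frac{\rmd}{\rmd t}d_{A}\left(X\right)}\leq\abs{b\left(X\right)}$, Hölder's inequality gives
\[
\varepsilon \leq \int_{\text{crossing}}\abs{b\left(X\left(s,x\right)\right)}\rmd s \leq \tau^{1/q^{*}} J_{R}\left(x\right)^{1/q}, \qquad \tfrac{1}{q}+\tfrac{1}{q^{*}}=1,
\]
hence the time in the shell is at least $\varepsilon^{q^{*}}J_{R}\left(x\right)^{-q^{*}/q}$ (the hypothesis forces $q>1$, so $q^{*}<\infty$). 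Truncating to $E_{M}\define E\cap\set{J_{R}\leq M}$ and combining this lower bound with the compressibility bound and the Minkowski sausage estimate $\mu_{n}\left(\set{d_{A}<2\varepsilon}\right)\leq\left(2\varepsilon\right)^{n-\dim_{\rm B}A-\delta}$ (valid for small $\varepsilon$ and any $\delta>0$, by Lemma \ref{lemma - Minkowski Sausage}) yields
\[
\mu_{n}\!\left(E_{M}\cap\set{d_{A}>2\varepsilon}\right) \leq C\,M^{q^{*}/q}\,\varepsilon^{\,n-\dim_{\rm B}A-\delta-q^{*}}.
\]
Because $q^{*}=q/(q-1)<n-\dim_{\rm B}A$ is exactly a rearrangement of the hypothesis, for $\delta$ small the exponent is positive and the right-hand side tends to $0$ as $\varepsilon\to0$.

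Finally I would let $\varepsilon\to0$: since $\mu_{n}\left(A\right)=0$, the sets $E_{M}\cap\set{d_{A}>2\varepsilon}$ increase to $E_{M}$ up to a null set, so $\mu_{n}\left(E_{M}\right)=0$ for every $M$. As $J_{R}<\infty$ almost everywhere, the $E_{M}$ exhaust $E$ up to a null set as $M\to\infty$, giving $\mu_{n}\left(E\right)=0$, which is the claimed avoidance. The main obstacle is the crossing-time lower bound: without control on the speed $\abs{b}$ a trajectory could traverse the shell arbitrarily quickly and spend negligible time in it, defeating the compressibility bound. It is precisely the $L^{q}$ integrability of $b$, transported onto the trajectories via compressibility and inserted through Hölder, that prevents this, and matching its exponent against the sausage exponent is what produces the sharp threshold $\frac{1}{q}+\frac{1}{n-\dim_{\rm B}A}<1$.
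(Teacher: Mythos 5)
Your proof is correct, but it takes a genuinely different route from the paper's. Note first that the paper does not prove this statement directly: it cites Aizenman, and its own avoidance argument is the proof of Theorem \ref{theorem - new avoidance}, which contains the present statement as the special case $S=\left[0,T\right]\times A$ (then $d_{S}\left(t,x\right)=d_{A}\left(x\right)$, $\abs{b\cdot\nabla d_{S}}\leq\abs{b}$, one takes $p=\infty$, and $q^{*}<n-\dim_{\rm B}A$ gives $d_{A}^{-1}\in L^{q^{*}}_{\loc}$ by the sausage bound). That proof integrates the logarithmic weight $g\left(y\right)=\log\left(r_{0}/y\right)$ along trajectories: a trajectory falling from distance $r_{0}$ to distance $\delta$ accumulates $g\left(\delta\right)=\log\left(r_{0}/\delta\right)$, while Fubini, the compressibility constant \eqref{compressibility constant} and H\"older against $d_{S}^{-1}\in L^{p^{*}}\left(0,T;L^{q^{*}}_{\loc}\right)$ bound the total integral by a constant independent of $\delta$, so $\mu_{n}\left(F\left(\delta\right)\right)\leq C/\log\left(r_{0}/\delta\right)\rightarrow 0$; this route needs the a.e.\ chain rule for $d_{S}\left(t,X\left(t,x\right)\right)$ (Rademacher, Marcus--Mizel, Serrin--Varberg), since $d_{S}$ is only Lipschitz and genuinely time-dependent. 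You instead work one shell at a time, balancing a crossing-time lower bound $\tau\geq\varepsilon^{q^{*}}J_{R}\left(x\right)^{-q^{*}/q}$ (H\"older along the trajectory) against the occupation-time upper bound $LT\mu_{n}\left(\set{d_{A}<2\varepsilon}\right)$ (compressibility plus the Minkowski sausage). What your route buys: no chain rule at all — only the $1$-Lipschitz property of $d_{A}$ and the integral equation \eqref{trajectories} are used — so the proof is more elementary; the price is the truncation $E_{M}=\set{J_{R}\leq M}$, needed because your crossing bound depends on the trajectory-wise quantity $J_{R}\left(x\right)$ (the paper's estimate is linear in Eulerian integrals and needs no truncation), and the fact that you use the full speed $\abs{b}$ with the isotropic sausage, so your argument does not directly extend to the anisotropic, normal-component refinement ($b\cdot\nabla d_{S}$ together with $\left(q^{*},p^{*}\right)\in\print\left(S\right)$) that the log-weight proof delivers. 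Two cosmetic points: $t\mapsto d_{A}\left(X\left(t,x\right)\right)$ is not $1$-Lipschitz in $t$ — it is absolutely continuous with derivative bounded by $\abs{b\left(X\right)}$, which is exactly the bound you use next; and your crossing interval lies in the closed shell $\set{\varepsilon\leq d_{A}\leq 2\varepsilon}$ while your compressibility bound was stated for the half-open shell, which is harmless since the closed shell is contained in $\set{d_{A}<3\varepsilon}$.
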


In Robinson \& Sharples \cite{RobinsonSharples13JDE} this result was adapted to the non-autonomous setting:
\begin{theorem}[Robinson \& Sharples \cite{RobinsonSharples13JDE}]\label{theorem - avoidance}
A regular Lagrangian flow solution $X\colon \left[0,T\right]\times \R^{n} \rightarrow \R^{n}$ of \eqref{ODE} avoids a set $S\subset \left[0,T\right]\times \R^{n}$ if
\begin{equation}\label{avoidance criterion}
b \in L^{p}\left(0,T;L^{q}_{\loc}\left(\R^{n}\right)\right)
\end{equation}
and
\[
d_{S}^{-1} \in L^{p^{*}}\left(0,T;L^{q^{*}}\left(\R^{n}\right)\right)
\]
(i.e. $\left(q^{*},p^{*}\right) \in \print(S)$ )
where $\frac{1}{p}+\frac{1}{p^{*}} = \frac{1}{q}+\frac{1}{q^{*}} = 1$.
\end{theorem}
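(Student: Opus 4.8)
The plan is to argue by contradiction through a single weighted integral measuring the ``cost'' of a trajectory reaching $S$. Set
\[
A\define\set{x\in\R^{n}\st\left(t,X\left(t,x\right)\right)\in S\ \text{for some}\ t\in\left[0,T\right]},
\]
so the claim is $\mu_{n}\left(A\right)=0$. I would first reduce to a bounded singular set: replacing $S$ by its closure leaves $d_{S}$ unchanged, and exhausting $S$ by the pieces $S\cap\left(\left[0,T\right]\times B_{k}\right)$ lets me treat each separately, so I may assume $S\subset\left[0,T\right]\times B_{R}$ for a fixed ball, the general case following by countable subadditivity of $\mu_{n}$. The central quantity would be
\[
J\left(x\right)\define\int_{0}^{T}\frac{\left(1+\abs{b\left(t,X\left(t,x\right)\right)}\right)\mathbf{1}_{B_{R+1}}\left(X\left(t,x\right)\right)}{d_{S}\left(t,X\left(t,x\right)\right)}\ \rmd t,
\]
and the whole proof reduces to showing that $J\equiv+\infty$ on $A$ while $\int_{\R^{n}}J\left(x\right)\rmd x<\infty$, which is impossible if $\mu_{n}\left(A\right)>0$.

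The first step is a one-dimensional estimate along each trajectory. Fix $x\in A$ for which $t\mapsto X\left(t,x\right)$ is absolutely continuous with $\int_{0}^{T}\abs{b\left(t,X\left(t,x\right)\right)}\rmd t<\infty$ (true for a.e. such $x$), and let $t^{*}$ be the first time the trajectory meets $S$, which exists since $S$ is closed and the curve is continuous. Writing $h\left(t\right)\define d_{S}\left(t,X\left(t,x\right)\right)$, the $1$-Lipschitz property of $d_{S}$ on $\R^{n+1}$ together with the space--time velocity $\left(1,b\left(t,X\left(t,x\right)\right)\right)$ gives that $h$ is absolutely continuous with $\abs{h'}\leq\sqrt{1+\abs{b}^{2}}\leq 1+\abs{b}$ a.e. Since $h\left(t^{*}\right)=0$ and $h>0$ on $\left[0,t^{*}\right)$, integrating $h'\geq-\left(1+\abs{b}\right)$ backward from $t^{*}$ yields $h\leq\phi$ on $\left[0,t^{*}\right)$, where $\phi\left(t\right)\define\int_{t}^{t^{*}}\left(1+\abs{b\left(s,X\left(s,x\right)\right)}\right)\rmd s$ obeys $\phi'=-\left(1+\abs{b}\right)$ and $\phi\left(t^{*}\right)=0$. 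Picking $\delta>0$ so small that $X\left(\cdot,x\right)\in B_{R+1}$ on $\left(t^{*}-\delta,t^{*}\right)$ (possible by continuity, as $X\left(t^{*},x\right)\in B_{R}$), I would then estimate
\[
\int_{t^{*}-\delta}^{t^{*}}\frac{1+\abs{b}}{h}\ \rmd t\geq\int_{t^{*}-\delta}^{t^{*}}\frac{-\phi'}{\phi}\ \rmd t=\log\phi\left(t^{*}-\delta\right)-\lim_{t\to t^{*}}\log\phi\left(t\right)=+\infty,
\]
so that $J\left(x\right)=+\infty$.

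The second step bounds $\int_{\R^{n}}J$ using the compressibility of the flow and the H\"older pairing built into the hypotheses. By Tonelli and the change of variables $y=X\left(t,x\right)$ for each fixed $t$ --- noting that the compressibility condition \eqref{compressibility constant} says exactly that the pushforward satisfies $X\left(t,\cdot\right)_{\#}\mu_{n}\leq L\mu_{n}$, so that $\int_{\R^{n}}f\left(X\left(t,x\right)\right)\rmd x\leq L\int_{\R^{n}}f$ for $f\geq0$ --- I would obtain
\[
\int_{\R^{n}}J\left(x\right)\rmd x\leq L\int_{0}^{T}\int_{B_{R+1}}\frac{1+\abs{b\left(t,y\right)}}{d_{S}\left(t,y\right)}\ \rmd y\ \rmd t.
\]
Applying H\"older in space with exponents $q,q^{*}$ and then in time with $p,p^{*}$ bounds the right-hand side by $L\,\norm{1+\abs{b}}_{L^{p}\left(0,T;L^{q}\left(B_{R+1}\right)\right)}\norm{d_{S}^{-1}}_{L^{p^{*}}\left(0,T;L^{q^{*}}\left(B_{R+1}\right)\right)}$; the first factor is finite because $b\in L^{p}\left(0,T;L^{q}_{\loc}\right)$ and the constant $1$ contributes only $T^{1/p}\mu_{n}\left(B_{R+1}\right)^{1/q}$ on the bounded ball, while the second is controlled by the global norm $\norm{d_{S}^{-1}}_{L^{p^{*}}\left(0,T;L^{q^{*}}\left(\R^{n}\right)\right)}<\infty$. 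Hence $\int_{\R^{n}}J<\infty$, which is incompatible with $J\equiv+\infty$ on a set of positive measure, forcing $\mu_{n}\left(A\right)=0$.

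I expect the main obstacle to be the localisation that reconciles the merely local integrability of $b$ with the divergence mechanism of the first step. The constant term in $1+\abs{b}$ is essential --- it is what captures trajectories approaching $S$ predominantly through the time variable (for instance when $S$ is a single point) --- yet it is not globally in $L^{q}$, so the entire argument must be confined to a bounded neighbourhood of $S$, and one must check that the logarithmic blow-up of $J$ is genuinely supplied by the part of each trajectory lying in that neighbourhood. By comparison the remaining ingredients --- measurability of $J$ via Tonelli, existence of the first hitting time, and absolute continuity of $h$ --- are routine once the hypotheses are unwound.
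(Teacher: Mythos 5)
Your argument is correct in substance, but it takes a genuinely different route from the proof in the paper (the paper proves the sharper Theorem \ref{theorem - new avoidance} by the method of Robinson \& Sharples, which is also how the statement under review is proved in the cited reference). The paper's proof is quantitative: it introduces the sets $F\left(\delta\right)$ of initial points whose trajectories start at distance at least $r_{0}$ from $S$ and descend below $\delta$ before time $T$, integrates the truncated logarithm $g\left(y\right)=\log\left(r_{0}/y\right)$ along trajectories, and after the same pushforward-plus-H\"older step as yours obtains $\mu_{n}\left(F\left(\delta\right)\right)\leq C/\log\left(r_{0}/\delta\right)\rightarrow 0$. You instead run an Aizenman-style contradiction: the weighted integral $J$ is infinite on the bad set (via the comparison $h\leq\phi$ and the divergence of $\int-\phi^{\prime}/\phi$) yet has finite integral over $\R^{n}$. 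The analytic ingredients coincide --- the Lipschitz bound $\abs{h^{\prime}}\leq 1+\abs{b}$ along absolutely continuous trajectories, the pushforward inequality encoded in \eqref{compressibility constant}, and the $\left(p,q\right)$/$\left(p^{*},q^{*}\right)$ H\"older pairing against $d_{S}^{-1}$ --- so the difference is one of bookkeeping. What each buys: your version is more elementary (no $\tau_{\delta}$, no truncated logarithm, and no appeal to the Marcus--Mizel or Serrin--Varberg chain rules, since the crude bound on $\abs{h^{\prime}}$ suffices); the paper's version yields a rate of decay for $\mu_{n}\left(F\left(\delta\right)\right)$ and, more importantly, is the format that upgrades to Theorem \ref{theorem - new avoidance}, where only the normal component $b\cdot\nabla d_{S}$ is assumed integrable. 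Your inequality $\abs{h^{\prime}}\leq 1+\abs{b}$ discards exactly that directional information, so your proof covers the cited theorem but would not extend to the paper's refinement without reinstating the chain rule $\frac{\rmd}{\rmd t}d_{S}\left(t,X\right)=\partial_{t}d_{S}+b\cdot\nabla d_{S}$.

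One loose end to close: your Step 1 is vacuous when the first hitting time is $t^{*}=0$, i.e.\ for initial points $x$ with $\left(0,x\right)\in\closure\left(S\right)$, since then no interval $\left(t^{*}-\delta,t^{*}\right)$ exists. You must observe that this set is Lebesgue-null, which does follow from your hypotheses: on it $d_{S}\left(t,x\right)\leq t$, so if it had positive measure then $\norm{d_{S}^{-1}\left(t,\cdot\right)}_{L^{q^{*}}\left(\R^{n}\right)}$ would be bounded below by a positive multiple of $t^{-1}$, contradicting $d_{S}^{-1}\in L^{p^{*}}\left(0,T;L^{q^{*}}\left(\R^{n}\right)\right)$ because $t^{-1}\notin L^{p^{*}}\left(0,T\right)$ for any $p^{*}\geq 1$. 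With that remark (and the routine measurability points you already flag, which are handled via \eqref{compressibility constant} in the standard way), the proof is complete.
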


In the present work we improve this result to account for the direction of the vector field near the set $S$. This is appropriate for the analysis of point-vortices as the Biot-Savart law generates a vector field perpendicular to the singular set.

In Section \ref{section - main results} we prove the following.
\begin{theorem}
A regular Lagrangian flow solution $X\colon \left[0,T\right]\times \R^{n} \rightarrow \R^{n}$ of \eqref{ODE} avoids a set $S\subset \left[0,T\right]\times \R^{n}$ if
\[
b \cdot \nabla d_{S} \in L^{p}\left(0,T;L^{q}_{\loc}\left(\R^{n}\right)\right),
\]
and
\[
d_{S}^{-1} \in L^{p^{*}}\left(0,T;L^{q^{*}}\left(\R^{n}\right)\right)
\]
(i.e. $\left(q^{*},p^{*}\right) \in \print(S)$ )
where $\frac{1}{p}+\frac{1}{p^{*}} = \frac{1}{q}+\frac{1}{q^{*}} = 1$.
\end{theorem}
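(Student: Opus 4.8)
The plan is to show directly that the ``bad set''
\[
A \define \set{x\in\R^{n} \st (t,X(t,x))\in S \text{ for some } t\in[0,T]}
\]
is null by intersecting it with an arbitrary compact $K\subset\R^{n}$ and estimating $\mu_{n}(A\cap K)$. Writing $g\define b\cdot\nabla d_{S}$ for the normal component of the field and $\Phi_{x}(t)\define d_{S}(t,X(t,x))$ for the distance from the trajectory to $S$, the driving observation is the one-sided differential inequality
\[
-\Phi_{x}'(t) = -\partial_{t}d_{S} - b\cdot\nabla d_{S} \leq \abs{\partial_{t}d_{S}} + \abs{g(t,X(t,x))} \leq 1 + \abs{g(t,X(t,x))},
\]
valid for a.e. $t$, which uses only that $d_{S}$ is $1$-Lipschitz (so $\abs{\partial_{t}d_{S}}\leq 1$) and isolates the normal component $g$ in place of the full field $b$ --- this is precisely the improvement over Theorem~\ref{theorem - avoidance}. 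Dividing by $\Phi_{x}$ on $\set{\Phi_{x}>0}$ gives $-\tfrac{\rmd}{\rmd t}\log\Phi_{x}\leq (1+\abs{g(t,X(t,x))})/\Phi_{x}(t)$, so if a trajectory first reaches $S$ at a time $t_{0}>0$ (where $\Phi_{x}(0)>0$), integrating over $[0,t_{0}]$ and using $\Phi_{x}(t_{0})=0$ forces
\[
\int_{0}^{T} \frac{1+\abs{g(t,X(t,x))}}{d_{S}(t,X(t,x))}\,\rmd t = +\infty.
\]
Thus, up to the slice $\set{x\st (0,x)\in S}$ of points already lying on $S$, the set $A$ is contained in the set where this integral diverges.

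Next I would show this divergence set is null by proving the integral is finite after integrating over $x\in K$. By the linear growth hypothesis \ref{b decay}, a.e. trajectory issuing from $K$ stays in a fixed bounded set $K'$ on $[0,T]$, so using Tonelli, the factor $\chi_{K'}(X(t,x))$, and the compressibility \eqref{compressibility constant} (which yields $\int_{\R^{n}}\psi(X(t,x))\,\rmd x\leq L\int_{\R^{n}}\psi\,\rmd y$ for $\psi\geq 0$),
\[
\int_{K}\!\int_{0}^{T}\frac{1+\abs{g(t,X(t,x))}}{d_{S}(t,X(t,x))}\,\rmd t\,\rmd x \leq L\int_{0}^{T}\!\int_{K'}\frac{1+\abs{g(t,y)}}{d_{S}(t,y)}\,\rmd y\,\rmd t.
\]
The term carrying $\abs{g}$ is bounded by $L\norm{g}_{L^{p}(0,T;L^{q}(K'))}\norm{d_{S}^{-1}}_{L^{p^{*}}(0,T;L^{q^{*}}(K'))}$ after Hölder's inequality in space and then in time with the conjugate pairs $\tfrac1q+\tfrac1{q^{*}}=\tfrac1p+\tfrac1{p^{*}}=1$, both factors being finite by hypothesis; the remaining term $L\int_{0}^{T}\!\int_{K'}d_{S}^{-1}\,\rmd y\,\rmd t$ is finite because $d_{S}^{-1}\in L^{p^{*}}(0,T;L^{q^{*}}(K'))$ embeds into $L^{1}((0,T)\times K')$ on the bounded domain. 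Hence the double integral is finite, the inner integral is finite for a.e. $x\in K$, and $\mu_{n}(A\cap K)=0$; letting $K$ exhaust $\R^{n}$ gives $\mu_{n}(A)=0$.

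The main obstacle is rigorously justifying the chain rule $\Phi_{x}'(t)=\partial_{t}d_{S}+b\cdot\nabla d_{S}$, since $d_{S}$ is only Lipschitz --- differentiable merely off a Lebesgue-null set $N\subset[0,T]\times\R^{n}$ --- while $t\mapsto X(t,x)$ is only absolutely continuous, so a priori the curve could linger on $N$. This is resolved once more by compressibility: $\int_{0}^{T}\!\int_{K}\chi_{N}(X(t,x))\,\rmd x\,\rmd t\leq LT\,\mu_{n}(N\cap K')=0$, so for a.e. $x$ one has $(t,X(t,x))\notin N$ for a.e. $t$, and combined with the absolute continuity of the trajectory the chain rule holds for a.e. $x$ and a.e. $t$. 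Two minor points remain: the slice $\set{x\st(0,x)\in S}$ is null because the Chebyshev bound $\mu_{n}(\set{d_{S}(t,\cdot)<\varepsilon})\leq\varepsilon^{q^{*}}\norm{d_{S}^{-1}(t)}_{L^{q^{*}}}^{q^{*}}$ with $\varepsilon=2t$ controls $\mu_{n}(S(0))$ by $(2t)^{q^{*}}\norm{d_{S}^{-1}(t)}_{L^{q^{*}}}^{q^{*}}$, which tends to $0$ along a suitable sequence $t\to 0$ by the temporal integrability of $d_{S}^{-1}$; and the reduction to a first hitting time is legitimate since $\Phi_{x}$ is continuous and nonnegative.
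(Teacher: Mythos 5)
Your argument runs on the same engine as the paper's proof of Theorem \ref{theorem - new avoidance}: Rademacher plus the compressibility bound \eqref{compressibility constant} to make sense of $\frac{\rmd}{\rmd t}d_{S}(t,X(t,x))$ along a.e.\ trajectory, the pointwise inequality $\abs{\tfrac{\rmd}{\rmd t}d_{S}(t,X(t,x))}\leq 1+\abs{b\cdot\nabla d_{S}}$ that isolates the normal component, and then compressibility plus H\"older with the conjugate pairs to show that $(1+\abs{b\cdot\nabla d_{S}})\,d_{S}^{-1}$ has finite spacetime integral. The difference is packaging: the paper works with the truncated logarithm $g(y)=\log(r_{0}/y)$ and the sets $F(\delta)$, obtaining the quantitative bound $\mu_{n}(F(\delta))\log(r_{0}/\delta)\leq C$ and letting $\delta\to 0$, whereas you let each offending trajectory force the divergence $\int_{0}^{T}(1+\abs{g})/\Phi_{x}\,\rmd t=+\infty$ and contradict this almost everywhere via Tonelli. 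Both are sound; your version is arguably cleaner, and you are more careful than the paper about the slice $\set{x \st (0,x)\in S}$ (your Chebyshev argument for its nullity is correct, and the paper leaves this point implicit since its set $\Omega$ only involves $t\in(0,T]$). Your resolution of the chain-rule issue (push the Rademacher null set off the trajectories by compressibility, then use the elementary chain rule at points of joint differentiability together with absolute continuity of the composition) is in substance what the paper does with its citations to Marcus \& Mizel and Serrin \& Varberg, apart from a notational slip: the bound should read $L\int_{0}^{T}\mu_{n}(N(t))\,\rmd t = L\,\mu_{n+1}(N)=0$, since $N$ is a spacetime set.

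The one point that needs repair is localization. The paper's truncation is self-localizing: $g'(d_{S})$ vanishes off $\set{d_{S}\leq r_{0}}$, and that set is bounded because $S$ is compact (the hypothesis under which the theorem is actually proved as Theorem \ref{theorem - new avoidance}), so after the compressibility change of variables every integral lives on a bounded region and local integrability suffices. Your divergence/Tonelli version loses this, and you restore it by invoking the linear-growth hypothesis \ref{b decay} to confine trajectories issuing from $K$ to a fixed bounded set $K'$ --- but \ref{b decay} is not among the hypotheses of the statement: only the existence of a regular Lagrangian flow and the two integrability conditions are assumed. The fix is easy and preserves your architecture: for a.e.\ $x$ the trajectory $t\mapsto X(t,x)$ is continuous on $[0,T]$, hence bounded, so you may write $A\cap K=\bigcup_{m}\set{x\in A\cap K \st \sup_{t}\abs{X(t,x)}\leq m}$ and run your estimate on each piece with $K'=\overline{B}_{m}$; alternatively, insert the indicator $\chi_{\set{\Phi_{x}<r_{0}}}$ into your divergent integral (the divergence occurs near the hitting time, where $\Phi_{x}<r_{0}$), which after compressibility localizes the spatial integral to $\set{d_{S}(t,\cdot)<r_{0}}$, a bounded set when $S$ is compact --- this is exactly what the paper's cutoff achieves automatically.
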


This result relies on the validity of the chain rule 
\begin{equation}\label{chain rule}
\frac{\rmd }{\rmd t} d_{S}\left(t,X\left(t,x\right)\right) = \frac{\partial}{\partial t} d_{S}\left(t,X\left(t,x\right)\right) + \frac{\partial}{\partial t} X\left(t,x\right) \cdot \nabla d_{S}\left(t,X\left(t,x\right)\right)
\end{equation}
which is not immediate as $d_{S}$ is only Lipschitz continuous and $X$ is only absolutely continuous in $t$ for almost every $x\in \R^{n}$.

Avoidance results can yield interesting qualitative properties of a regular Lagrangian flow: Robinson \& Sadowski \cite{RobinsonSadowski09a} demonstrate the almost everywhere uniqueness of particle trajectories for suitable weak solutions of the Navier-Stokes equations using avoidance methods (See also \cite{RobinsonSadowski09} and \cite{RobinsonSadowskiSharples12}).

\section{Proofs of the main results}\label{section - main results}
\subsection{Existence and Uniqueness of Solutions}
We start by proving the first part of Theorem \ref{theorem - all results}, which may be formulated as follows:
\begin{theorem}\label{theorem - existence and uniqueness}
Let $S\subset \left[0,T\right]\times\R^{n}$ be compact. If the vector field $b$ satisfies 
\begin{enumerate}[label=$\textup{\roman{*})}$,ref=$\roman{*})$]
\item $b\in L^{1}_{\loc}\left(\left(0,T\right)\times\R^{n}\right)$,
\item $\divergence b \in L^{1}\left(0,T;L^{\infty}\left(\R^{n}\right)\right)$, 
\item $\frac{b}{1+\abs{x}} \in L^{1}\left(\left(0,T\right)\times\R^{n}\right) + L^{1}\left(0,T;L^{\infty}\left(\R^{n}\right)\right)$,
\item for all $\Omega \subset \subset S^{c}$, the vector field $b$ extended by $b\left(t,\cdot\right) \equiv 0$ for $t<0$ has the renormalization property on $\Omega$.

\item for some $1\leq p,q\leq \infty$
\begin{align}
b\cdot\nabla d_{S}&\in L^{p}\left(0,T;L^{q}_{\loc}\left(\R^{n}\right)\right)%\label{b dot distance}\\
&\text{and}\qquad  d_{S}^{-1} &\in L^{p^{*}}\left(0,T;L^{q^{*}}_{\loc}\left(\R^{n}\right)\right) %\label{distance reciprocal}
\end{align}
(i.e. $\left(q^{*},p^{*}\right) \in \print(S)$)
where 
$\frac{1}{p}+\frac{1}{p^{*}}=\frac{1}{q}+\frac{1}{q^{*}}=1$,
\end{enumerate}
then for all initial data $u_{0}\in L^{\infty}_{\loc}\left(\R^{n}\right)$ there exists a unique weak solution of \eqref{transport equation}.
\end{theorem}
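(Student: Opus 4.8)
The plan is to establish the full renormalization property for the vector field $b$ and then appeal to Theorem~\ref{theorem - DiPLi}, which delivers both existence and uniqueness once $b$ satisfies \ref{b integrable}--\ref{b decay} and is renormalized. By the lemma of De~Lellis recalled above, it is enough to show that $b$, extended by $b\left(t,\cdot\right)\equiv 0$ for $t<0$, is locally renormalized on $\Omega_{0}\define\left(-\infty,T\right)\times\R^{n}$. On any open set compactly contained in $S^{c}$ this is precisely hypothesis \ref{b renormalized on complement}; the entire difficulty is to carry the renormalization identity \emph{across} the singular set $S$, where $b$ need not be $BV$.

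First I would fix a weak solution $u$, a function $\beta\in C^{1}\left(\R\right)$, and a test map $\phi\in C^{\infty}_{c}\left(\Omega_{0}\right)$ with $K\define\support\phi$, and introduce the distance cut-off $\chi_{\varepsilon}\define\gamma\!\left(d_{S}/\varepsilon\right)$, where $\gamma\in C^{\infty}\left(\R\right)$ is chosen with $\gamma\equiv 0$ on $\left(-\infty,1\right]$, $\gamma\equiv1$ on $\left[2,\infty\right)$, $0\le\gamma\le1$ and $\abs{\gamma'}\le C$. Since $d_{S}$ is Lipschitz, the product $\phi\chi_{\varepsilon}$ is a compactly supported Lipschitz map whose support lies in $K\cap\set{d_{S}\ge\varepsilon}$, a compact subset of $S^{c}$; it is therefore an admissible test map for a suitable $\Omega'\subset\subset S^{c}$, and by hypothesis \ref{b renormalized on complement} together with Remark~\ref{remark - Lipschitz test functions} the local renormalization identity holds with $\phi\chi_{\varepsilon}$ in place of $\phi$. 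Expanding by the product rule splits this identity into a \emph{main term}, in which $\chi_{\varepsilon}$ multiplies the bracket $\partial_{t}\phi+b\cdot\nabla\phi+\divergence b\,\phi$, and an \emph{error term}
\[
R_{\varepsilon}\define\iint_{\Omega_{0}}\beta\left(u\right)\phi\left(\partial_{t}\chi_{\varepsilon}+b\cdot\nabla\chi_{\varepsilon}\right)\rmd x\,\rmd t .
\]
Because $\left(q^{*},p^{*}\right)\in\print\left(S\right)$ the print is nonempty, so $\mu_{n+1}\left(S\right)=0$, $\chi_{\varepsilon}\to1$ almost everywhere, and the main term converges to the desired identity \eqref{renormalization property} on $\Omega_{0}$ by dominated convergence (using $u\in L^{\infty}_{\loc}$, $b\in L^{1}_{\loc}$ and $\divergence b\in L^{1}\left(0,T;L^{\infty}\right)$).

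The crux is to show $R_{\varepsilon}\to0$. Computing $\partial_{t}\chi_{\varepsilon}+b\cdot\nabla\chi_{\varepsilon}=\varepsilon^{-1}\gamma'\!\left(d_{S}/\varepsilon\right)\left(\partial_{t}d_{S}+b\cdot\nabla d_{S}\right)$, I note that $\gamma'\!\left(d_{S}/\varepsilon\right)$ is supported in the annulus $A_{\varepsilon}\define\set{\varepsilon\le d_{S}\le 2\varepsilon}$, on which $\varepsilon^{-1}\le 2\,d_{S}^{-1}$. Since $d_{S}$ is the \emph{space-time} Euclidean distance to $S$ it is $1$-Lipschitz, whence $\abs{\partial_{t}d_{S}}\le1$ and the troublesome temporal derivative is absorbed into a constant. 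Bounding $\abs{\beta\left(u\right)\phi}\le M$ on $K$, this gives
\[
\abs{R_{\varepsilon}}\le 2CM\iint_{A_{\varepsilon}\cap K}d_{S}^{-1}\left(1+\abs{b\cdot\nabla d_{S}}\right)\rmd x\,\rmd t .
\]
Applying H\"older's inequality in both the temporal and spatial variables with the conjugate pairs $\left(p,p^{*}\right)$ and $\left(q,q^{*}\right)$ bounds the right-hand side by the mixed norm $\norm{d_{S}^{-1}}_{L^{p^{*}}\left(L^{q^{*}}\right)\left(A_{\varepsilon}\cap K\right)}$ times $\norm{1}_{L^{p}\left(L^{q}\right)\left(A_{\varepsilon}\cap K\right)}+\norm{b\cdot\nabla d_{S}}_{L^{p}\left(L^{q}\right)\left(A_{\varepsilon}\cap K\right)}$. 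The latter factor stays bounded by hypothesis \ref{distance condition}, while the former tends to $0$: as $\varepsilon\to0$ the sets $A_{\varepsilon}$ shrink onto the null set $S$, so $\mathbf{1}_{A_{\varepsilon}}\,d_{S}^{-1}\to0$ almost everywhere and is dominated by $d_{S}^{-1}\in L^{p^{*}}\left(0,T;L^{q^{*}}_{\loc}\right)$; absolute continuity of the integral (dominated convergence in the Bochner norm) then forces $\norm{d_{S}^{-1}}_{L^{p^{*}}\left(L^{q^{*}}\right)\left(A_{\varepsilon}\cap K\right)}\to0$. Hence $R_{\varepsilon}\to0$, $b$ is locally renormalized on $\Omega_{0}$, and the result follows.

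I expect the error estimate to be the main obstacle. The appearance of the convective derivative $\partial_{t}d_{S}+b\cdot\nabla d_{S}$ --- the rate of change of the distance to $S$ along the flow --- is what dictates the hypotheses: using the space-time distance is exactly what guarantees $\abs{\partial_{t}d_{S}}\le1$, and the conjugate integrability in \ref{distance condition} is what allows $d_{S}^{-1}$ to be paired against $b\cdot\nabla d_{S}$ (and against the constant $1$) in H\"older's inequality. Keeping the bookkeeping of the mixed Bochner norms honest and justifying the absolute-continuity limit --- in particular handling infinite exponents, where $d_{S}^{-1}\in L^{q^{*}}_{\loc}$ already constrains the temporal sections of $S$ --- is the delicate part.
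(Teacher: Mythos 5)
Your proposal is correct and follows essentially the same route as the paper: reduce to the renormalization property via Theorem \ref{theorem - DiPLi} and the De Lellis negative-time extension, cut off the test function by a smooth function of $d_{S}/\varepsilon$, invoke hypothesis \ref{b renormalized on complement} together with Remark \ref{remark - Lipschitz test functions} to use the Lipschitz test function away from $S$, and kill the error near $S$ by mixed-norm H\"older pairing $1+\abs{b\cdot\nabla d_{S}}$ against $d_{S}^{-1}$ with the conjugate exponents of \ref{distance condition}, using $\abs{\partial_{t}d_{S}}\leq 1$. The only differences are organizational --- you split via the product rule into a main term plus commutator-type error where the paper decomposes the integration domain, and you use the pointwise bound $\varepsilon^{-1}\leq 2d_{S}^{-1}$ on the annulus where the paper uses Chebyshev's inequality on $\set{d_{S}\leq\varepsilon}$ --- and your final limiting step (including the infinite-exponent caveat you flag, where one swaps which H\"older factor vanishes and which stays bounded) is treated at the same level of detail as in the paper.
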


\begin{corollary}
With the same hypotheses, there exists a unique regular Lagrangian flow solution of \eqref{ODE}.
\end{corollary}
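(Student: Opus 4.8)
The plan is to obtain the corollary from Theorem \ref{theorem - existence and uniqueness} through the equivalence, central to the theory of DiPerna \& Lions \cite{DiPernaLions89a} and Ambrosio \cite{Ambrosio04}, between well-posedness of the transport equation and well-posedness of the regular Lagrangian flow. The crucial point is that the proof of Theorem \ref{theorem - existence and uniqueness} establishes not merely the existence and uniqueness of weak solutions but the stronger fact that $b$ (extended by zero to negative times) enjoys the renormalization property; together with the structural hypotheses \ref{b integrable}, \ref{div b bounded} and \ref{b decay}, this is precisely the input required by the abstract flow theory. Thus the bulk of the work is already done, and what remains is to verify that our hypotheses are exactly those under which existence and uniqueness of the flow are proved.

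For existence I would proceed by approximation and compactness. Mollifying $b$ in space produces smooth fields $b_{\varepsilon}$ whose classical flows $X_{\varepsilon}$ exist globally on $\left[0,T\right]$ thanks to the sublinear growth encoded in \ref{b decay}, and which satisfy the compressibility bound \eqref{compressibility constant} with constant $L\leq \exp\left(\norm{\divergence b}_{L^{1}\left(0,T;L^{\infty}\left(\R^{n}\right)\right)}\right)$ uniformly in $\varepsilon$ by Liouville's formula and \ref{div b bounded}. The equi-boundedness of the associated image measures furnishes a limit map $X$; identifying $X$ as a regular Lagrangian flow of the limiting field, and in particular verifying the integral identity \eqref{trajectories}, is exactly where the uniqueness half of Theorem \ref{theorem - existence and uniqueness} is used, since it forces the limit of the push-forward densities to solve the (dual) continuity equation with the correct velocity.

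Uniqueness is the cleaner direction. If $X$ and $\tilde{X}$ are two regular Lagrangian flows, each pushes the Lebesgue measure forward to a bounded solution of the continuity equation $\partial_{t}\rho + \divergence\left(b\rho\right)=0$ with the same initial datum, the $L^{\infty}$ bound on the densities being guaranteed by \eqref{compressibility constant}. Because the renormalization property makes this equation (the formal adjoint of \eqref{transport equation}, to which it is equivalent under \ref{div b bounded}) well-posed in the relevant class, the two densities coincide; a standard disintegration argument (Ambrosio \cite{Ambrosio04}, Crippa \cite{Crippa08}) then upgrades this to $X\left(t,\cdot\right)=\tilde{X}\left(t,\cdot\right)$ for almost every $x$ and every $t\in\left[0,T\right]$.

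I expect the main obstacle to be one of bookkeeping rather than of substance: one must confirm that \ref{b integrable}, \ref{div b bounded} and \ref{b decay} are stated in the precise form demanded by the abstract theorem and that the uniform compressibility estimate is preserved under mollification and in the limit. Notably, none of this interacts with the singular set $S$, which enters only through the renormalization property already secured in Theorem \ref{theorem - existence and uniqueness}; consequently no further fractal-geometric analysis is needed here, and the corollary follows by invoking the established correspondence.
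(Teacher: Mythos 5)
Your proposal is correct and takes essentially the same route as the paper: the paper itself gives no independent argument for the corollary, deducing it from the renormalization property of $b$ (extended by zero for negative time) established in the proof of Theorem \ref{theorem - existence and uniqueness}, together with hypotheses \ref{b integrable}--\ref{b decay}, via the standard DiPerna--Lions/Ambrosio correspondence (the ``reverse method of characteristics'') between well-posedness of \eqref{transport equation} and well-posedness of the regular Lagrangian flow. Your sketches of the mollification/compactness and continuity-equation/disintegration arguments are simply the internal machinery of that cited correspondence, so they add detail but do not change the approach.
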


\begin{proof}
From the DiPerna-Lions theory Theorem \ref{theorem - DiPLi} it is sufficient to demonstrate that the vector field $b$ has the renormalization property.

Let $u\in L^{\infty}\left(\left(-\infty,T\right)\times\R^{n}\right)$ be a weak solution of \eqref{transport equation} on $\left(-\infty,T\right)\times\R^{n}$ where the vector field $b$ is extended by zero for negative time.
Let $\beta\in C^{1}\left(\R\right)$ and $\phi\in C_{c}^{\infty}\left(\left(-\infty,T\right)\times\R^{n}\right)$. Fix $\varepsilon$ in the range $0<\varepsilon<1$ and define
\begin{equation}\label{def:test-function}
\phi_{\varepsilon}\left(t,x\right)\define \phi\left(t,x\right)\chi_{0}\left(d_{S}\left(t,x\right)/\varepsilon\right)
\end{equation}
where $\chi_{0}\in C^{\infty}\left(\R\right)$ satisfies
\[
\chi_{0}\left(z\right)
=
\begin{cases}
0 & \abs{z}\leq \frac{1}{2} \\
1 & \abs{z}\geq 1.
\end{cases}
\]
Observe that $\phi_{\varepsilon}$ is Lipschitz (although not necessarily smooth). An unpublished result of Serrin \cite{SerrinUnpublished} (see also \cite{LeoniMorini07}, \cite{Stampacchia65}, \cite{BoccardoMurat82}, and \cite{MarcusMizel72}) ensures that the chain rule applies almost everywhere for the composition of the Lipschitz functions $\chi_{0}$ and $d_{S}$, hence
\begin{align}
\partial_{t} \phi_{\varepsilon} %&= \phi \partial_{t} \left[\chi_{0}\left(d_{S}\left(t,x\right)/\varepsilon\right)\right] + \chi_{0}\left(d_{S}\left(t,x\right)/\varepsilon\right) \partial_{t} \phi \notag\\
&= \phi \chi_{0}^{\prime}\left(d_{S}/\varepsilon\right)\frac{1}{\varepsilon} \partial_{t} d_{S} + \chi_{0}\left(d_{S}/\varepsilon\right) \partial_{t} \phi \label{time derivative mollifier}
\intertext{and}
\nabla \phi_{\varepsilon} &= \phi \chi_{0}^{\prime}\left(d_{S}/\varepsilon\right)\frac{1}{\varepsilon} \nabla d_{S} + \chi_{0}\left(d_{S}/\varepsilon\right) \nabla \phi \label{space derivative mollifier}
\end{align}
almost everywhere on $\left(-\infty,T\right)\times\R^{n}$.

For brevity we adopt the notation
\[
\set{d_{S}>\varepsilon}\define \set{\left(t,x\right)\in \left(-\infty,T\right)\times\R^{n} \st  d_{S}\left(t,x\right)>\varepsilon},
\]
and similarly define the sets $\set{d_{S}\geq \varepsilon}$ and $\set{d_{S}<\varepsilon}$.

Let
\begin{align*}
I\left(\varepsilon\right) \define \iint\limits_{\set{d_{S}>\varepsilon}} \beta\left(u\right)\left(\frac{\partial \phi}{\partial t} + b\cdot\nabla \phi + \divergence b\;\phi\right)\;\rmd x\;\rmd t
\intertext{so, as $\mu_{n+1}\left(S\right)=0$,}
\lim_{\varepsilon \rightarrow 0} I\left(\varepsilon\right) = \int\limits_{-\infty}^{T}\int_{\R^{n}} \beta\left(u\right)\left(\frac{\partial\phi}{\partial t} + b\cdot\nabla \phi + \divergence b\;\phi\right)\;\rmd x\;\rmd t.
\end{align*}

Now, as $\phi\left(t,x\right) = \phi_{\varepsilon}\left(t,x\right)$ on $\set{d_{S}>\varepsilon}$ it follows that
\begin{align}
I\left(\varepsilon\right) =& \iint\limits_{\set{d_{S}>\varepsilon}} \beta\left(u\right)\left(\frac{\partial\phi_{\varepsilon}}{\partial t} + b\cdot\nabla \phi_{\varepsilon} + \divergence b\;\phi_{\varepsilon}\right)\;\rmd x\;\rmd t \notag \\
=& \iint\limits_{\set{d_{S} > \varepsilon/4}} \beta\left(u\right)\left(\frac{\partial\phi_{\varepsilon}}{\partial t} + b\cdot\nabla \phi_{\varepsilon} + \divergence b\;\phi_{\varepsilon}\right)\;\rmd x\;\rmd t \label{nice region} \\
&-\iint\limits_{\set{\varepsilon \geq d_{S} > \varepsilon/4}} \beta\left(u\right)\left(\frac{\partial\phi_{\varepsilon}}{\partial t} + b\cdot\nabla \phi_{\varepsilon} + \divergence b\;\phi_{\varepsilon}\right)\;\rmd x\;\rmd t. \notag 
%\intertext{As $\support \phi_{\varepsilon} \subset \subset \set{d_{S}>\varepsilon/4}$ the first of these integrals vanishes by assumption \ref{b renormalized on complement}, hence}
\intertext{By assumption \ref{b renormalized on complement} and Theorem 1.9 the vector field $b$ has the local renormalization property on $\Omega=\set{d_{S}>\varepsilon/4}$. Consequently, as $\support \phi_{\varepsilon} \subset \subset \set{d_{S}>\varepsilon/4}$ and $\phi_{\varepsilon}$ is Lipschitz it follows by Remark \ref{remark - Lipschitz test functions} that that the integral \eqref{nice region} vanishes. }
I\left(\varepsilon\right) =& -\iint\limits_{\set{\varepsilon \geq d_{S} > \varepsilon/4}} \beta\left(u\right)\left(\frac{\partial\phi_{\varepsilon}}{\partial t} + b\cdot\nabla \phi_{\varepsilon} + \divergence b\;\phi_{\varepsilon}\right)\;\rmd x\;\rmd t \notag\\
=& -\iint\limits_{\set{ d_{S} \leq \varepsilon}} \beta\left(u\right)\left(\frac{\partial\phi_{\varepsilon}}{\partial t} + b\cdot\nabla \phi_{\varepsilon} + \divergence b\;\phi_{\varepsilon}\right)\;\rmd x\;\rmd t \label{key integral}
\end{align}
as $\phi_{\varepsilon}$ vanishes on $\set{d_{S}\leq \varepsilon/4}$.

It remains to demonstrate that \eqref{key integral} vanishes at the limit: as $\abs{\phi_{\varepsilon}} \leq \abs{\phi}$ and $\divergence b\in L^{1}\left(\left(0,T\right)\times \R^{n}\right)$ we immediately obtain
\[
\lim_{\varepsilon\rightarrow 0}\iint\limits_{\set{d_{S} \leq \varepsilon}} \beta\left(u\right) \divergence b\  \phi_{\varepsilon}\; \rmd x\; \rmd t = 0
\]
so it is sufficient to demonstrate that the limit of
\[
J\left(\varepsilon\right) \define \iint\limits_{\set{d_{S} \leq \varepsilon}} \abs{\beta\left(u\right)\left(\frac{\partial\phi_{\varepsilon}}{\partial t} + b\cdot\nabla \phi_{\varepsilon}\right)}\;\rmd x\;\rmd t
\]
is zero.

Using the identities \eqref{time derivative mollifier} and \eqref{space derivative mollifier},
\begin{align}
J\left(\varepsilon\right)= &\iint_{\set{d_{S}\leq \varepsilon}}
\Big\lvert\beta\left(u\right)\phi \chi_{0}^{\prime}\left(d_{S}/\varepsilon\right)\frac{1}{\varepsilon} \partial_{t} d_{S}
+ \beta\left(u\right)\chi_{0}\left(d_{S}/\varepsilon\right) \partial_{t} \phi \notag\\
&\qquad\qquad+\beta\left(u\right) \phi \chi_{0}^{\prime}\left(d_{S}/\varepsilon\right)\frac{1}{\varepsilon} b\cdot\nabla d_{S}
+ \beta\left(u\right) \chi_{0}\left(d_{S}/\varepsilon\right) b\cdot\nabla \phi \Big\rvert \;\rmd x\;\rmd t.\notag \\
\leq &\iint_{\set{d_{S}\leq\varepsilon}} \frac{1}{\varepsilon}\abs{ \beta\left(u\right)\phi \chi_{0}^{\prime}\left(d_{S}/\varepsilon\right)\left(\partial_{t} d_{S} + b\cdot \nabla d_{S}\right)}\;\rmd x\;\rmd t\label{awkward}\\
&+\iint_{\set{d_{S}\leq\varepsilon}} \abs{\beta\left(u\right)\chi_{0}\left(d_{S}/\varepsilon\right) \left(\partial_{t} \phi + b\cdot\nabla \phi\right)}\;\rmd x\;\rmd t.\label{easy}
\end{align}
As $\chi_{0}$ and $\beta\left(u\right)$ are bounded, and $b\in L^{1}_{\loc}\left(\left(0,1\right)\times \R^{n}\right)$ the integral \eqref{easy} vanishes as $\varepsilon\rightarrow 0$. Writing $J_{1}\left(\varepsilon\right)$ for the integral \eqref{awkward} we see that
\begin{align*}
J_{1}\left(\varepsilon\right) &\leq C \frac{1}{\varepsilon} \iint_{\set{d_{S}\leq \varepsilon}} 1 + \abs{b\cdot \nabla d_{S}}\;\rmd x\;\rmd t
\end{align*}
as $\beta\left(u\right)$, $\phi$ and $\chi_{0}^{\prime}$ are bounded, and $\abs{\partial_{t} d_{S}}\leq 1$ as distance functions have Lipschitz constant $1$. Applying H\"older's inequality with exponents satisfying \ref{distance condition} we obtain
\begin{align*}
J_{1}\left(\varepsilon\right) \leq & C\frac{1}{\varepsilon} \norm{\left(1+\abs{b\cdot\nabla d_{S}}\right)\vert_{\set{d_{S}\leq \varepsilon}}}_{L^{p}\left(-1,T;L^{q}\left(\R^{n}\right)\right)} \left( \int_{-1}^{T} \left( \int_{\set{x\in \R^{n} \st d_{S}\left(t,x\right)\leq\varepsilon}} 1 \;\rmd x\right)^{\frac{p^{*}}{q^{*}}} \rmd t\right)^{\frac{1}{p^{*}}}.
\end{align*}

Now, by Chebyshev's inequality
\begin{align}
&\frac{1}{\varepsilon}\left( \int_{-1}^{T} \mu_{n}\left(\set{x\in \R^{n} \st d_{S}\left(t,x\right)^{-1}>1/\varepsilon}\right)^{\frac{p^{*}}{q^{*}}} \rmd t\right)^{\frac{1}{p^{*}}}\label{weak chebyshev}\\
&\leq \frac{1}{\varepsilon} \left(\int_{-1}^{T} \left(\varepsilon^{q^{*}}\int\limits_{\set{x \in \R^{n} \st d_{S}(t,x) \leq \varepsilon}} d_{S}(t,x)^{-q^{*}}\;\rmd x\right)^{\frac{p^{*}}{q^{*}}}\; \rmd t \right)^{\frac{1}{p^{*}}}\notag\\
&= \norm{d_{S}^{-1}\vert_{\set{d_{S}\leq \varepsilon}}}_{L^{p^{*}}\left(-1,T;L^{q^{*}}\left(\R^{n}\right)\right)}.\notag
\end{align}

Hence
\begin{align*}
J_{1}\left(\varepsilon\right) \leq C \norm{\left(1+\abs{b\cdot\nabla d_{S}}\right)\vert_{\set{d_{S}\leq \varepsilon}}}_{L^{p}\left(-1,T;L^{q}\left(\R^{n}\right)\right)} \norm{d_{S}^{-1}\vert_{\set{d_{S}\leq \varepsilon}}}_{L^{p^{*}}\left(-1,T;L^{q^{*}}\left(\R^{n}\right)\right)}.
\end{align*}
This tends to zero as $\varepsilon\rightarrow 0$ as $1+\abs{b\cdot\nabla d_{S}}\in L^{p}\left(-1,T;L^{q}_{\loc}\left(\R^{n}\right)\right)$ from \eqref{b dot distance} and $d_{S}^{-1}\in L^{p^{*}}\left(-1,T;L^{q^{*}}_{\loc}\left(\R^{n}\right)\right)$ from \eqref{distance reciprocal}. Consequently,
\[
\lim_{\varepsilon\rightarrow 0} I\left(\varepsilon\right) =
\int\limits_{-\infty}^{T}\int_{\R^{n}} \beta\left(u\right)\left(\frac{\partial\phi}{\partial t} + b\cdot\nabla \phi + \divergence b\;\phi\right)\;\rmd x\;\rmd t = 0
%\int\limits_{-\infty}^{T} \int_{\R^{n}} \beta\left(u\right)\frac{\rmd\phi}{\rmd t} + \beta\left(u\right) b\cdot\nabla \phi + \left[\beta\left(u\right)-u\beta^{\prime}\left(u\right)\right]\divergence b\;\phi\;\rmd x\;\rmd t = 0.
\]
As $\beta\in C^{1}\left(\R\right)$ was an arbitrary map and $u\in L^{\infty}\left(\left(0,T\right)\times\R^{n}\right)$ an arbitrary weak solution it follows that the vector field $b$ has the renormalization property on $\left(-\infty,T\right)\times\R^{n}$.
\end{proof}

Remarks:
\begin{itemize}
\item From \eqref{weak chebyshev} it is sufficient for the spatial component of $d_{S}^{-1}$ to be locally weak-$L^{q^{*}}\left(\R^{n}\right)$.
\item It is straightforward (but notationally demanding) to adapt the above proof for unbounded weak solutions $u \in L^{p}\left(0,T;L^{q}\left(\R^{n}\right)\right)$.
\end{itemize}

\subsection{Avoidance of Singularities}
We now show that almost every trajectory of the regular Lagrangian Flow does not intersect the set $S$, namely the second part of Theorem \ref{theorem - all results}.
\begin{theorem}\label{theorem - new avoidance}
Let $S\subset\left[0,T\right]\times\R^{n}$ be compact, and suppose that the assumption \ref{distance condition} of Theorem \ref{theorem - existence and uniqueness} is satisfied. If $X$ is a regular Lagrangian flow solution of \eqref{ODE} then $X$ avoids the set $S$.
\end{theorem}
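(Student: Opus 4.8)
The plan is to control the rate at which a trajectory can approach $S$ by tracking the scalar $t\mapsto d_{S}\left(t,X\left(t,x\right)\right)$, and to show that for almost every starting point $x$ this quantity cannot reach zero. The key observation is that, wherever the chain rule \eqref{chain rule} is valid and $\frac{\partial}{\partial t}X\left(t,x\right)=b\left(t,X\left(t,x\right)\right)$, one has the pointwise bound
\[
\abs{\frac{\rmd}{\rmd t} d_{S}\left(t,X\left(t,x\right)\right)} \leq 1 + \abs{b\cdot \nabla d_{S}}\left(t,X\left(t,x\right)\right),
\]
since $d_{S}$ has Lipschitz constant $1$, so $\abs{\partial_{t}d_{S}}\leq 1$. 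Dividing by $d_{S}$ controls $\frac{\rmd}{\rmd t}\log d_{S}\left(t,X\left(t,x\right)\right)$ by $\left(1+\abs{b\cdot\nabla d_{S}}\right)/d_{S}$ evaluated along the trajectory. If a trajectory were to reach $S$ at some time $t_{0}$ then $\log d_{S}\to -\infty$ there, so it suffices to prove that this quantity is integrable in $t$ for almost every $x$.

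First I would establish the chain rule \eqref{chain rule} for almost every $x$, which is the main obstacle, since $\nabla d_{S}$ exists only almost everywhere while the trajectory $t\mapsto \left(t,X\left(t,x\right)\right)$ sweeps out a Lebesgue-null subset of spacetime. I would mollify, setting $d_{S}^{\eta}=d_{S}\ast \rho_{\eta}$: for the smooth $d_{S}^{\eta}$ the chain rule holds classically along the absolutely continuous curve $t\mapsto X\left(t,x\right)$, giving
\[
d_{S}^{\eta}\left(t,X\left(t,x\right)\right)-d_{S}^{\eta}\left(0,x\right)=\int_{0}^{t}\left(\partial_{t}d_{S}^{\eta}+b\cdot\nabla d_{S}^{\eta}\right)\left(\tau,X\left(\tau,x\right)\right)\rmd \tau.
\]
Since $d_{S}$ is Lipschitz we have $d_{S}^{\eta}\to d_{S}$ uniformly and $\nabla d_{S}^{\eta}=\left(\nabla d_{S}\right)\ast\rho_{\eta}\to \nabla d_{S}$ almost everywhere with $\abs{\nabla d_{S}^{\eta}}\leq 1$. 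The difficulty is passing to the limit in the integral, whose integrand is evaluated along the flow. I would resolve this by transferring to the image variable via the compressibility bound \eqref{compressibility constant}, which yields $\int_{\R^{n}} f\left(X\left(\tau,x\right)\right)\rmd x \leq L\int_{\R^{n}} f\left(y\right)\rmd y$ for nonnegative $f$, combined with a localisation: for each $R$ the set $A_{R}$ of points whose trajectory stays in $B_{R}$ for all $t\in\left[0,T\right]$ satisfies $\bigcup_{R}A_{R}=\R^{n}$ up to a null set (almost every trajectory has compact image), and restricting to $A_{R}$ confines the transferred integrals to the bounded region $\left(0,T\right)\times B_{R}$ on which $b\in L^{1}$. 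There $b\cdot\nabla d_{S}^{\eta}\to b\cdot\nabla d_{S}$ in $L^{1}$ by dominated convergence (dominated by $2\abs{b}$), so along a subsequence the chain rule passes to the limit for almost every $x\in A_{R}$, hence for almost every $x$.

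Next I would prove the integrability claim. As $S$ is compact the neighbourhood $\Omega\define \set{d_{S}<1}$ is bounded, contained in some $\left(0,T\right)\times B_{R_{0}}$. By Tonelli's theorem and the compressibility bound \eqref{compressibility constant},
\[
\iint_{\set{d_{S}<1}} \frac{1+\abs{b\cdot\nabla d_{S}}}{d_{S}}\left(t,X\left(t,x\right)\right)\rmd x\, \rmd t \leq L\iint_{\set{d_{S}<1}} \frac{1+\abs{b\cdot\nabla d_{S}}}{d_{S}}\left(t,y\right)\rmd y\, \rmd t,
\]
and H\"older's inequality with the conjugate exponents of assumption \ref{distance condition} bounds the right-hand side by $L\,\norm{\left(1+\abs{b\cdot\nabla d_{S}}\right)\vert_{\Omega}}_{L^{p}\left(0,T;L^{q}\left(\R^{n}\right)\right)}\norm{d_{S}^{-1}\vert_{\Omega}}_{L^{p^{*}}\left(0,T;L^{q^{*}}\left(\R^{n}\right)\right)}$. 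Both factors are finite because $\Omega$ is bounded and $b\cdot\nabla d_{S}\in L^{p}\left(0,T;L^{q}_{\loc}\right)$, $d_{S}^{-1}\in L^{p^{*}}\left(0,T;L^{q^{*}}_{\loc}\right)$ by \ref{distance condition}. Hence the inner integrand in $x$ is finite for almost every $x$.

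Finally I would combine these. Fix $x$ in the full-measure set where both the chain rule holds and the last integral is finite, and suppose for contradiction that the trajectory meets $S$; let $t_{0}=\inf\set{t\in\left(0,T\right] \st d_{S}\left(t,X\left(t,x\right)\right)=0}$, which is attained and positive provided $x\notin S\left(0\right)$. For $t$ near $t_{0}$ the trajectory lies in $\Omega$, so integrating the bound on $\frac{\rmd}{\rmd t}\log d_{S}$ over $\left[t_{2},t_{1}\right]\subset\left[t_{2},t_{0}\right)$ bounds $\abs{\log d_{S}\left(t_{1},X\left(t_{1},x\right)\right)-\log d_{S}\left(t_{2},X\left(t_{2},x\right)\right)}$ by the finite integral, contradicting $\log d_{S}\to -\infty$ as $t_{1}\to t_{0}$. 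Since $\mu_{n}\left(S\left(0\right)\right)=0$ --- which follows from $d_{S}^{-1}\in L^{p^{*}}\left(0,T;L^{q^{*}}_{\loc}\right)$, because a positive-measure slice $S\left(0\right)$ would force $d_{S}\left(t,\cdot\right)^{-1}\gtrsim \abs{t}^{-1}$ on it and violate the integrability as $t\to 0$ --- the exceptional starting points form a null set, which is precisely the avoidance conclusion \eqref{avoidance}.
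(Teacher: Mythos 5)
Your proof is correct and rests on the same core mechanism as the paper's: control $\log d_{S}\left(t,X\left(t,x\right)\right)$ along trajectories by $\left(1+\abs{b\cdot\nabla d_{S}}\right)/d_{S}$, whose integral along the flow is finite by the compressibility bound \eqref{compressibility constant} followed by H\"older's inequality with the conjugate pairs of assumption \ref{distance condition}. The differences lie in the two technical steps. For the chain rule \eqref{chain rule}, the paper invokes Rademacher's theorem plus compressibility (so that for a.e.\ $x$ the trajectory spends a.e.\ time at points of differentiability of $d_{S}$) and then cites Marcus \& Mizel and Serrin \& Varberg; you instead give a self-contained mollification argument. That argument works, but note that, as written, its dominated-convergence step is performed in Eulerian variables and therefore uses $b\in L^{1}\left(\left(0,T\right)\times B_{R}\right)$ --- a hypothesis absent from the minimal statement of the theorem (which assumes only \ref{distance condition} and the existence of the flow), though it is available in the paper's setting from \ref{b integrable} and \ref{b decay}; alternatively you could dominate by $2\abs{b\left(\tau,X\left(\tau,x\right)\right)}\in L^{1}\left(0,T\right)$ for a.e.\ $x$ directly, using compressibility only to guarantee that a.e.\ trajectory spends a.e.\ time at Lebesgue points of $\nabla d_{S}$. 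For the endgame, the paper argues quantitatively: with $g\left(y\right)=\log\left(r_{0}/y\right)$ it shows $\mu_{n}\left(F\left(\delta\right)\right)\log\left(r_{0}/\delta\right)\leq C$ uniformly in $\delta$, so the measure of the set of initial points whose trajectories reach distance $\delta$ from $S$ decays like $1/\log\left(1/\delta\right)$; your version is the qualitative counterpart (for a.e.\ fixed $x$, $\log d_{S}$ cannot blow down to $-\infty$ along a path with finite integral), which suffices for avoidance but yields no rate. Finally, your explicit observation that $\mu_{n}\left(S\left(0\right)\right)=0$ follows from \eqref{distance reciprocal} makes precise a point the paper leaves implicit in its appeal to Aizenman and Robinson \& Sharples.
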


\begin{proof}
Let $\Omega = \set{x\in \R^{n} \st X\left(t,x\right) \in S\ \text{for some}\; t \in \left(0,T\right]}$. For each $r_{0}>0$ and $0<\delta<r_{0}$ define
\[
F\left(\delta\right) = \set{ x\in \Omega \st  d_{S}(0,x) \geq r_{0},\ \tau_{\delta}(x)<T}
\]
where
\[
\tau_{\delta}\left(x\right) \define
\begin{cases}
\sup \set{ t^\ast \st d_{S}\left(t,X\left(t,x\right)\right) \geq \delta\quad \forall t\in\left[0,t^\ast\right]} &\text{if } d_{S}(0,x)> \delta \\
0 &\text{if } d_{S}(0,x) \leq \delta.
\end{cases}
\]
Following Aizenman \cite{Aizenman78a} and Robinson \& Sharples \cite{RobinsonSharples13JDE} it is sufficient to show that $\mu_{n}\left(F
\left(\delta\right)\right) \rightarrow 0$ as $\delta\rightarrow 0$.

Define the Lipschitz function
\[
g\left(y\right) = \begin{cases}
\log\left(r_{0}/y\right) & \delta \leq y \leq r_{0} \\
0 &r_{0}<y
\end{cases}
\]
and note that
\begin{align}
g\left(d_{S}(0,x)\right) &= 0 &\forall x&\in F\left(\delta\right)\label{g zero}\\
g\left(d_{S}\left(\tau_{\delta}(x),X\left(\tau_{\delta}(x),x\right)\right)\right) &= g\left(\delta\right) &\text{a.e.}\ x&\in F\left(\delta\right)\label{g delta}
\end{align}
as the trajectories $t \mapsto X\left(t,x\right)$ are continuous for almost every $x\in \R^{n}$.

Now $d_{S}$ is Lipschitz with Lipschitz constant $1$ so by Rademacher's Theorem there is a set $N$ with $\mu_{n+1}(N)=0$ such that the derivatives $\frac{\partial }{\partial t} d_{S}\left(t,x\right)$ and $\nabla d_{S}\left(t,x\right)$ exist, and are bounded by $1$, for all $\left(t,x\right)\notin N$. The compressibility constant \eqref{compressibility constant} then ensures that $\frac{\partial }{\partial t} d_{S}\left(t,X\left(t,x\right)\right)$ and $\nabla d_{S}\left(t,X\left(t,x\right)\right)$ exist, and are bounded by $1$, for almost every $\left(t,x\right) \in \left[0,T\right] \times \R^{n}$.

Further, as trajectories are absolutely continuous, for almost every $x\in \R^{n}$
\[
t\mapsto d_{S}\left(t,X\left(t,x\right)\right)
\]
is absolutely continuous, hence is differentiable for almost every $t\in\left[0,T\right]$. It follows from Marcus \& Mizel \cite{MarcusMizel72} that for almost every $x\in \R^{n}$
\begin{align}
\frac{\rmd }{\rmd t} d_{S}\left(t,X\left(t,x\right)\right) &= \frac{\partial}{\partial t} d_{S} \left(t,X\left(t,x\right)\right) + \frac{\partial X}{\partial t} \cdot \nabla d_{S}\left(t,X\left(t,x\right)\right)\quad\text{a.e.}\  t\in\left[0,T\right]\notag\\
&= \frac{\partial}{\partial t} d_{S} \left(t,X\left(t,x\right)\right) + b\left(t,X\left(t,x\right)\right)\cdot \nabla d_{S}\left(t,X\left(t,x\right)\right)\quad\text{a.e.}\  t\in\left[0,T\right]\notag
\intertext{hence for almost every $x\in\R^{n}$, for almost every $t\in\left[0,T\right]$}
\abs{\frac{\rmd }{\rmd t} d_{S}\left(t,X\left(t,x\right)\right)} &\leq 1 + \abs{b\left(t,X\left(t,x\right)\right)\cdot \nabla d_{S}\left(t,X\left(t,x\right)\right) }.\label{distance derivative}
\end{align}

Next, as $g$ is Lipschitz, it follows from Serrin \& Varberg \cite{SerrinVarberg69} that for almost every $x\in \R^{n}$, for almost every $t\in \left[0,T\right]$
\begin{align}
\frac{\rmd }{\rmd t} g\left(d_{S}\left(t,X\left(t,x\right)\right)\right) &= g^{\prime}\left(d_{S}\left(t,X\left(t,x\right)\right)\right) \frac{\rmd}{\rmd t} d_{S}\left(t,X\left(t,x\right)\right).\label{g distance derivative}
\end{align}

Now, from \eqref{g zero} and \eqref{g delta}
\begin{align*}
\mu_{n}\left(F\left(\delta\right)\right) \abs{g\left(\delta\right)} &= \int_{F\left(\delta\right)} \abs{g\left(d_{S}\left(\tau_{\delta}(x),X\left(\tau_{\delta}(x),x\right)\right)\right) - g\left(d_{S}\left(0,x\right)\right)}\ \rmd x \\
&= \int_{F\left(\delta\right)} \abs{\int_{0}^{\tau_{\delta}(x)} \frac{\rmd }{\rmd t} g\left(d_{S}\left(t,X\left(t,x\right)\right)\right) \  \rmd t}\ \rmd x \\
\intertext{from \eqref{distance derivative} and \eqref{g distance derivative}}
&\leq \int_{F\left(\delta\right)} \int_{0}^{\tau_{\delta}(x)} \abs{ g^{\prime}\left(d_{S}\left(t,X\left(t,x\right)\right)\right)} \left(1 + \abs{b\left(t,X\left(t,x\right)\right) \cdot \nabla d_{S}\left(t,X\left(t,x\right)\right)}\right)\ \rmd t\ \rmd x.
\intertext{As the integrand is measurable from $\left[0,T\right]\times \R^{n}\rightarrow \R$ we apply Fubini's Theorem}
&= \int_{0}^{T} \int_{\R^{n}} \abs{ g^{\prime}\left(d_{S}\left(t,X\left(t,x\right)\right)\right)} \left(1 + \abs{b\left(t,X\left(t,x\right)\right) \cdot \nabla d_{S}\left(t,X\left(t,x\right)\right)}\right)\ \rmd x\ \rmd t\\
& \leq L \int_{0}^{T} \int_{\R^{n}} \abs{ g^{\prime}\left(d_{S}\left(t,x\right)\right)} \left(1 + \abs{b\left(t,x\right) \cdot \nabla d_{S}\left(t,x\right)}\right)\ \rmd x\ \rmd t.
\end{align*}
Finally, as
\[
g^{\prime}\left(y\right) = \begin{cases}
-\frac{1}{y} & \delta<y<r_{0} \\
0 & y>r_{0}
\end{cases}.
\]
we conclude that
\begin{align*}
\mu_{n}\left(F\left(\delta\right)\right)\abs{g\left(\delta\right)} &\leq
 L \int_{0}^{T} \int\limits_{\set{x \st d_{S}(t,x)<r_{0}}} d_{S}\left(t,x\right)^{-1} \left(1 + \abs{b\left(t,x\right) \cdot \nabla d_{S}\left(t,x\right)}\right)\ \rmd x\ \rmd t
\end{align*}
which is finite from H\"older's inequality and the conditions \eqref{b dot distance} and \eqref{distance condition}. As $\abs{g\left(\delta\right)} = \log\left(r_{0}/\delta\right) \rightarrow 0$ as $\delta\rightarrow 0$, it follows that $\mu_{n}\left(F\left(\delta\right)\right)\rightarrow 0$ as required.
\end{proof}

\subsection{Codimension print of trajectories}\label{section - codimension of trajectories}
In many applications we wish to consider the anisotropic detail of the graph of trajectories
\[
S\define \set{\left(t,Z\left(t,x\right)\right) \st t\in\left[0,T\right], x\in S_{0}}\subset \left[0,T\right]\times\R^{n}
\]
where $S_{0}\subset \R^{n}$ is some set of initial data. For sufficiently regular maps $Z$ this graph will have similar anisotropic detail to the product $\left[0,T\right]\times S_{0}$ in which case the codimension print would be immediately given by Theorem \ref{theorem - codimension product print}.

If the map $\left(t,x\right)\mapsto \left(t,Z\left(t,x\right)\right)$ is bi-Lipschitz then it is not difficult to show that
\[
d_{\left[0,T\right]\times S_{0}}\left(t,x\right)\leq C d_{S}\left(t,Z\left(t,x\right)\right) \leq C^{2} d_{\left[0,T\right]\times S_{0}}\left(t,x\right)
\]
from which it follows that the codimension prints of $S$ and $\left[0,T\right]\times S_{0}$ are identical, in which case the codimension print of $S$ is immediately given by Theorem \ref{theorem - codimension product print}.

In general the situation is more complicated: first, if an individual trajectory is not Lipschitz in time then the graph it traces can have a large fractal dimension. Secondly, if $Z$ is not bi-Lipschitz in space then the box-counting dimension of the temporal section $S\left(t\right)$ may vary in time.

For example if the trajectories are described by $Z\left(t,x\right)\define x+t\left(x^{2}-x\right)$ then the set of initial data $S\left(0\right)\define \set{n^{-1}\st n\in \mathbb{N}}$ evolves to the set $S\left(1\right)=\set{Z\left(1,x\right) \st x\in S\left(0\right)} = \set{n^{-2} \st n\in\mathbb{N}}$, in which case the upper and lower box-counting dimensions are not preserved as
\[
\dim_{LB} S\left(0\right)=\dim_{B}S\left(0\right)=\frac{1}{2} > \frac{1}{3} = \dim_{LB}S\left(1\right)=\dim_{B}S\left(1\right)
\]
(see Example 13.4 of Robinson \cite{BkRobinson01}).

However, if the trajectories have some uniform H\"older regularity in time then we can describe the codimension print in terms of the H\"older exponents and the maximum box-counting dimension of the temporal sections. This result requires no spatial regularity of the map $Z$.

\begin{theorem}\label{theorem - singular trajectories}
Let $S_{0}\subset\R^{n}$ be bounded. Suppose that for some $\alpha$ in the range $0<\alpha\leq 1$ the map $Z\colon\left[0,T\right]\times S_{0}\rightarrow \R^{n}$ is $\alpha$-H\"older continuous in $t$ uniformly in $x$, which is to say that there exists a $K>0$ with
\begin{align}\label{uniform Holder condition}
\abs{Z\left(t_{1},x\right)-Z\left(t_{2},x\right)}&\leq K \abs{t_{1}-t_{2}}^{\alpha} &\forall t_{1},t_{2}\in\left[0,T\right]\quad\forall x\in S_{0},
\end{align}
then the distance function $d_{S}$ of the set
\begin{align}\label{S family of curves}
S\define \set{\left(t,Z\left(t,x\right)\right) \st t\in\left[0,T\right], x\in S_{0}}\subset \left[0,T\right]\times\R^{n} 
\end{align}
satisfies
\[
d_{S}^{-1}\in L^{\infty}\left(0,T;L^{r}_{\loc}\left(\R^{n}\right)\right),
\]
(that is $\left(r,\infty\right) \in \print(S)$), for all $r<\alpha\left(n-\sup_{t\in\left[0,T\right]} \dim_{B}S\left(t\right)\right)$.
\end{theorem}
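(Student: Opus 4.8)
The plan is to strip the temporal variable from the distance function by exploiting the uniform H\"older continuity \eqref{uniform Holder condition}, thereby reducing the claimed space-time estimate to a purely \emph{spatial} estimate on the sections $S(t)$ that is uniform in $t$. The key observation is that if $\left(s,Z\left(s,x_{0}\right)\right)$ realises the space-time distance $d_{S}\left(t,x\right)=\delta$ then $\abs{t-s}\leq\delta$ and $\abs{x-Z\left(s,x_{0}\right)}\leq\delta$, so by \eqref{uniform Holder condition} the spatial point $Z\left(t,x_{0}\right)\in S\left(t\right)$ satisfies
\[
d_{S\left(t\right)}\left(x\right)\leq \abs{x-Z\left(t,x_{0}\right)}\leq \abs{x-Z\left(s,x_{0}\right)}+K\abs{t-s}^{\alpha}\leq \delta+K\delta^{\alpha}\leq \left(1+K\right)d_{S}\left(t,x\right)^{\alpha}
\]
whenever $d_{S}\left(t,x\right)\leq 1$ (here using $\delta\leq\delta^{\alpha}$ for $0<\delta\leq1$ and $0<\alpha\leq1$). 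Hence $d_{S}\left(t,x\right)^{-1}\leq \left(1+K\right)^{1/\alpha}d_{S\left(t\right)}\left(x\right)^{-1/\alpha}$ on $\set{d_{S}\leq1}$, while on $\set{d_{S}>1}$ the reciprocal is bounded by $1$. Writing $r'\define r/\alpha$ and splitting the integral $\int_{B}d_{S}\left(t,x\right)^{-r}\rmd x$ accordingly, it therefore suffices to prove the uniform bound $\sup_{t\in\left[0,T\right]}\int_{B} d_{S\left(t\right)}\left(x\right)^{-r'}\rmd x<\infty$ for every ball $B$, where by hypothesis $r'<n-D$ with $D\define\sup_{t}\dim_{\rm B}S\left(t\right)$.

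Next I would reduce this spatial integral to a uniform Minkowski-content bound on the sections. Fixing any $D'$ with $D<D'<n-r'$ and decomposing dyadically in the distance, one has
\[
\int_{B} d_{S\left(t\right)}^{-r'}\rmd x \lesssim \mu_{n}\left(B\right)+\sum_{k\geq0} 2^{\left(k+1\right)r'}\,\mu_{n}\!\left(\set{x\in B \st d_{S\left(t\right)}\left(x\right)<2^{-k}}\right),
\]
so the integral is bounded, uniformly in $t$, as soon as
\begin{equation}\label{plan uniform minkowski}
\sup_{t\in\left[0,T\right]} \mu_{n}\!\left(\set{x\in B \st d_{S\left(t\right)}\left(x\right)<\rho}\right)\leq C\,\rho^{\,n-D'}\qquad\text{for all } \rho<\rho_{0},
\end{equation}
with $C,\rho_{0}>0$ independent of $t$, since the resulting geometric series then converges because $n-D'>r'$. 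The exponent $n-D'$ is natural: for a \emph{single fixed} section the Minkowski-sausage formulation (Lemma \ref{lemma - Minkowski Sausage}) converts the bound $\dim_{\rm B}S\left(t\right)\leq D<D'$, with $\dim_{\rm B}$ as in Definition \ref{def:box}, into precisely such a decay of the neighbourhood measure.

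The main obstacle is establishing the \emph{uniformity in $t$} of \eqref{plan uniform minkowski}, because the Minkowski-sausage bound for an individual section holds only below a scale threshold that a priori depends on $t$, and this threshold could in principle degenerate as $t$ ranges over $\left[0,T\right]$. To control this I would again use the uniform H\"older modulus to transfer covers between nearby sections: for $\abs{t-t'}\leq h$ the bound \eqref{uniform Holder condition} forces $S\left(t\right)$ and $S\left(t'\right)$ to lie within Hausdorff distance $Kh^{\alpha}$, whence $\set{d_{S\left(t\right)}<\rho}\subset\set{d_{S\left(t'\right)}<\rho+Kh^{\alpha}}$. The plan is to introduce, for each scale $\rho$, a finite net of times of spacing $h\sim\left(\rho/K\right)^{1/\alpha}$ so that every section is compared, at a comparable scale, to a net section, and to feed in the box-counting bound $\dim_{\rm B}S\left(t_{j}\right)\leq D$ at the net times. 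The delicate point---and where the uniformity in $x$ of \eqref{uniform Holder condition} is indispensable---is that this time-net necessarily refines as $\rho\to0$, so that care is required to propagate the neighbourhood estimate to arbitrarily small scales without the implicit constant deteriorating; this is the crux of the argument, after which \eqref{plan uniform minkowski}, and hence the theorem, follow.
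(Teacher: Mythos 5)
Your first two reductions coincide with the paper's own proof. The comparison $d_{S(t)}(x)\leq(K+1)\,d_{S}(t,x)^{\alpha}$ on $\set{d_{S}\leq 1}$ is exactly the paper's inequality \eqref{distance function slice bound}, derived the same way (the paper first replaces $S$ by its closure, using Lemma \ref{lemma - basic codimension properties}, so that the infimum defining $d_{S}$ is attained; you should make this explicit, since you speak of a point of $S$ ``realising'' the distance). Likewise your dyadic-shell decomposition is just a rewriting of the paper's layer-cake/Fubini step. The genuine gap is that you never prove the one statement on which everything then hinges: the neighbourhood bound $\mu_{n}\left(\set{x\in B\st d_{S(t)}(x)<\rho}\right)\leq C\rho^{\,n-D'}$ with $C$ and $\rho_{0}$ \emph{independent of $t$}. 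You outline a time-net strategy, observe correctly that the net must refine as $\rho\to0$, and then designate precisely this difficulty ``the crux of the argument'' without carrying it out; a proof cannot defer its crux. And the difficulty is not bookkeeping: the hypothesis $\dim_{\rm B}S(t)\leq D$ is purely asymptotic, so Lemma \ref{lemma - Minkowski Sausage} yields $\mu_{n}\left(\set{d_{S(t)}<\rho}\right)\leq C_{t}\rho^{\,n-D'}$ only for $\rho$ below a threshold $\rho_{0}(t)$, with $C_{t}$ and $\rho_{0}(t)$ uncontrolled in $t$, while your H\"older transfer compares $S(t)$ to a net section only at scales $\rho\gtrsim Kh^{\alpha}$; at every scale you must therefore invoke these uncontrolled constants at a family of net times that changes with $\rho$.

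In fact the step you left open cannot be closed from the stated hypotheses, because the uniform bound --- and the conclusion of Theorem \ref{theorem - singular trajectories} itself --- can fail. Take $n=1$, $\alpha=K=1$, $T=2$ and countably many trajectories indexed by $m\geq1$, $0\leq j\leq m^{3}$ (with $S_{0}$ any bounded set of distinct labels; the statement does not require $Z(0,x)=x$, and one can even arrange that by adding tiny distinct offsets drawn from a set of box dimension zero):
\[
Z_{m,j}(t)\define\max\set{0,\ jm^{-4}-\abs{t-1/m}}.
\]
Each $Z_{m,j}$ is $1$-Lipschitz in $t$ and takes values in $[0,1]$, and the resulting $S$ is already compact. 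For fixed $t>0$ only the scales $m<2/t$ contribute points other than $0$, and $S(0)=\set{0}$, so every section $S(t)$ is a finite set and $\sup_{t}\dim_{\rm B}S(t)=0$. Yet whenever $\abs{t-1/m}\leq 1/(2m)$ the section contains an $m^{-4}$-net of $\left[0,1/(2m)\right]$, whence $d_{S}(t,x)\leq m^{-4}$ there and $\int_{0}^{1}d_{S}(t,x)^{-1/2}\,\rmd x\geq m/2$; this holds on a set of times of measure $1/m>0$, so $\esssup_{t}\int_{0}^{1}d_{S}(t,x)^{-1/2}\,\rmd x=\infty$ even though $1/2<\alpha\left(n-\sup_{t}\dim_{\rm B}S(t)\right)=1$. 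The same sections show that no constant depending only on the exponent and on $\omega=\sup_{t}\diameter S(t)$ can work in the uniform neighbourhood bound, since they all have dimension zero and diameter at most one. For comparison, the paper's proof passes this point by asserting exactly such a uniform constant, citing Lemma \ref{lemma - Minkowski Sausage} and $\omega$; your instinct that uniformity in $t$ is the delicate point is thus well-founded, and what is really needed is an additional hypothesis (a uniform Minkowski estimate for the sections, finitely many trajectories as in the vortex-wave application, or spatial regularity of $Z$). But identifying the obstacle is not the same as overcoming it: as it stands, your argument is incomplete at exactly this point, and no completion of it is possible without strengthening the assumptions.
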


It is clear that $d_{S}\left(t,x\right)\leq d_{S\left(t\right)}\left(x\right)$ as for all $y\in S\left(t\right)$ the point $\left(t,y\right)\in S$ and so $d_{S}\left(t,x\right)\leq \abs{\left(x,t\right)-\left(y,t\right)}=\abs{x-y}$. In the following proof we see that the H\"older condition \eqref{uniform Holder condition} ensures that the converse inequality $d_{S\left(t\right)}\left(x\right)\leq \left(K+1\right) d_{S}\left(t,x\right)^{\alpha}$ holds.

\begin{proof}
From Lemma \ref{lemma - basic codimension properties} the codimension print is invariant under closure of sets so we can assume that $S$ is closed. Let $r<\alpha\left(n-\sup_{t\in\left[0,T\right]} \dim_{B}S\left(t\right)\right)$ and let $\delta>0$ be sufficiently small that $r+\delta<\alpha\left(n-\sup_{t\in\left[0,T\right]} \dim_{B}S\left(t\right)\right)$.

To show that $d_{S}^{-1}\in L^{\infty}\left(0,T;L^{r}_{\loc}\left(\R^{n}\right)\right)$ it is sufficient to demonstrate that
\begin{align}
\esssup_{t\in\left[0,T\right]} \int\limits_{\set{x\st d_{S}\left(t,x\right)<1}} d_{S}\left(t,x\right)^{-r} \rmd x \label{distance integral}
\end{align}
is finite, as $d_{S}^{-1}$ is bounded away from $S$.

Fix $\left(t,x\right)\in \left[0,T\right]\times\R^{n}$ such that $d_{S}\left(t,x\right)<1$ and let $\left(s,y\right)\in S$ be such that $d_{S}\left(t,x\right)= \abs{\left(t,x\right)-\left(s,y\right)}$. Now, $y=Z\left(s,y_{0}\right)$ for some $y_{0}\in S_{0}$, as $S$ has the form \eqref{S family of curves}, so certainly the point $Z\left(t,y_{0}\right)\in S\left(t\right)$. Consequently,
\begin{align}
d_{S\left(t\right)}\left(x\right) &\leq \abs{x-Z\left(t,y_{0}\right)} \notag
\leq  \abs{x-Z\left(s,y_{0}\right)} + \abs{Z\left(s,y_{0}\right)-Z\left(t,y_{0}\right)}\notag
\intertext{which, from the uniform H\"older condition \eqref{uniform Holder condition},}
&\leq \abs{x-y} + K\abs{t-s}^{\alpha} \leq d_{S}\left(t,x\right)+ K d_{S}\left(t,x\right)^{\alpha}\notag\\
&\leq \left(K+1\right) d_{S}\left(t,x\right)^{\alpha}\label{distance function slice bound} 
\end{align}
as $d_{S}\left(t,x\right) < 1$ and $\alpha \leq 1$.
This inequality yields the inclusion
\begin{align*}
 \set{x \st  d_{S}\left(t,x\right)< 1} &\subset \set{x \st  d_{S\left(t\right)}\left(x\right)< K+1} &\forall t\in\left[0,T\right]
\end{align*}
and the inequality $d_{S}\left(t,x\right)^{-1}\leq \left(\tfrac{1}{K+1}d_{S\left(t\right)}\left(x\right)\right)^{-1/\alpha}$ so for all $t\in\left[0,T\right]$
\begin{align}
I\left(t\right)\define \int\limits_{\set{x\st d_{S}\left(t,x\right)<1}} d_{S}\left(t,x\right)^{-r} \rmd x
\leq \int\limits_{\set{x\st d_{S\left(t\right)}\left(x\right)<K+1}} d_{S\left(t\right)}\left(x\right)^{-r/\alpha} \rmd x. \label{distance integral bounded by slice}
\end{align}

We write $M\define \left(K+1\right)$ and, following the argument of \cite{Aizenman78a}, we rewrite \eqref{distance integral bounded by slice} as
\begin{align*}
I\left(t\right)&\leq \int\limits_{\set{x\st d_{S\left(t\right)}\left(x\right)<M}} M^{-r/\alpha}\rmd x + \int\limits_{\set{x\st d_{S\left(t\right)}\left(x\right)<M}} \left(d_{S\left(t\right)}\left(x\right)^{-r/\alpha}-M^{-r/\alpha}\right) \rmd x \notag\\
&= M^{-r/\alpha}\mu_{n}\left(\set{x\st d_{S\left(t\right)}\left(x\right)<M}\right) + \int\limits_{\set{x\st d_{S\left(t\right)}\left(x\right)<M}} \int_{M^{-r/\alpha}}^{d_{S\left(t\right)}(x)^{-r/\alpha}} 1 \rmd s\;\rmd x\notag
\intertext{which, from Fubini's theorem,}
&= M^{-r/\alpha}\mu_{n}\left(\set{x\st d_{S\left(t\right)}\left(x\right)<M}\right) + \int_{M^{-r/\alpha}}^{\infty} \mu_{n}\left(\set{x \st  d_{S\left(t\right)}\left(x\right)<s^{-\alpha/r}}\right) \rmd s. \label{integral neighbourhood bound}
\end{align*}
Next, the diameter of the temporal sections $\omega\define \sup_{t\in\left[0,T\right]} \diameter S\left(t\right)<\infty$ as $S$ is bounded, so from Lemma \ref{lemma - Minkowski Sausage} there exists a constant $C$ dependent on $r/\alpha+\delta/\alpha$ and $\omega$ such that
\begin{align*}
\mu_{n}\left(\set{d_{S\left(t\right)}<\varepsilon}\right)&\leq C\varepsilon^{r/\alpha+\delta/\alpha} &\forall \varepsilon\in\left(0,M\right]\quad\forall t\in\left[0,T\right].
\end{align*}
Consequently,
\begin{align*}
I\left(t\right)\leq M^{-r/\alpha}CM^{r/\alpha+\delta/\alpha} + \int_{M^{-r/\alpha}}^{\infty} C \left(s^{-\alpha/r}\right)^{r/\alpha +\delta/\alpha} \rmd s,
\end{align*}
which is finite as $\left(\alpha/r\right)\left(r/\alpha +\delta/\alpha\right)= 1+\delta/r > 1$. Further, this bound is independent of $t$ so
we conclude that \eqref{distance integral} is finite from which it follows that $d_{S}^{-1} \in L^{\infty}\left(0,T;L^{r}_{\loc}\left(\R^{n}\right)\right)$, as required.
\end{proof}

\subsection{Application to the vortex-wave system}
\label{subsec:vortex-wave}

The purpose of this paragraph to explore the link between our main result stated in Theorem \ref{theorem - all results} and a situation arising in fluid dynamics to describe the interaction of a 2-dimensional fluid with several point vortices.
The resulting system, called vortex-wave system, is a coupling of a nonlinear transport equation for the vorticity of the fluid and a system of ODE for the evolution of the point vortices. It is derived from the incompressible Euler equations and was introduced by Marchioro and Pulvirenti \cite{Marpul, MarPul1} and 
by Starovo\v{i}tov \cite{starovoitov-existence, starovoitov-unique}.
 In the case of one single point vortex,  the vortex-wave system may be written as
\begin{equation}
\label{eq:vortex-wave}
\begin{cases}
\displaystyle \partial_t \omega+b \cdot \nabla \omega =0\\
\displaystyle v=\frac{x^\perp}{|x|^2}\ast \omega,\quad b=v+ \frac{(x-z(t))^\perp}{|x-z(t)|^2}\\
\displaystyle \dot{z}(t)=v(t,z(t)),
\end{cases}
\end{equation} 
with $\omega\in L^\infty((0,T), L^1\cap L^p(\R^2))$ for some $p>2$, denoting the vorticity, and $z\in W^{1,\infty}((0,T))$ denoting the vortex trajectory. Note that $p>2$ indeed ensures that the velocity field generated by $\omega$, $v=\frac{x^\perp}{|x|^2}\ast \omega$, is uniformly bounded on $(0,T)\times \R^2$. In Crippa et al. \cite{CrippaLopesFilhoMiotNussenzveigLopes16}, it is proved that for any such weak solution $(\omega,z)$ to the system \eqref{eq:vortex-wave}, there exists a unique regular Lagrangian flow such that $\omega$ is constant along the flow trajectories. Moreover, generically, its trajectories do not collide with the trajectory of the point vortex (namely $X(t,x)$ avoids the set $S(t)=\{z(t)\}$ for almost every $x\in \R^2$). Actually, this property is proved in \cite{CrippaLopesFilhoMiotNussenzveigLopes16} for more general vector fields $b$ of the form above, where 
 $v$ is a given bounded vector field satisfying assumptions that are essentially the ones required by Ambrosio's result. In particular, the point vortex trajectory $t\mapsto z(t)$ is Lipschitz. 

When there is only one point vortex trajectory, the singular set is defined by
\begin{equation}
S=\left\{(t,z(t));\quad t\in [0,T] \right\}
\end{equation}
and its temporal sections are $S(t)=\left\{z(t) \right\}$, so that $\text{dim}_B(S(t))=0$ for all $t\in [0,T]$, and $d_{S(t)}(x)=|x-z(t)|$. 

Moreover, since $z$ is Lipschitz (with Lipschitz constant given $K$), we have
\begin{equation}\label{ineq:compar}
\frac{1}{K+1} d_{S(t)}(x)\leq d_S(t,x)\leq d_{S(t)}(x)\leq (K+1)d_S(t,x).
\end{equation}
So by Theorem \ref{theorem - singular trajectories} we retrieve that $d_S^{-1}\in L^\infty(0,T; L_{\loc}^{r}(\R^2))$ for all $r<2$. In order to apply Proposition \ref{prop:traj} yielding Theorem \ref{theorem - all results}, we still need to check that \ $b\cdot \nabla d_{S} \in L^{1}\left(0,T;L^{q}_{\loc}\left(\R^{2}\right)\right)$ for some $q>2$. However we do not know the explicit form of $d_S$ in the present setting; so, we need to come back to the explicit distance $d_{S(t)}(x)$.

In \cite{CrippaLopesFilhoMiotNussenzveigLopes16}, the renormalization property is obtained by considering test functions depending on the quantity $d_{S(t)}\left(x\right)/\varepsilon=|x-z(t)|/\varepsilon$. It is then based on the observation that $b\cdot \nabla d_{S(t)}(x)=v\cdot \nabla d_{S(t)}(x)$ is not singular. So, the first part of Theorem \ref{theorem - all results} (namely of Theorem \ref{theorem - existence and uniqueness}), which is based on test functions defined by \eqref{def:test-function}, is an extension of the method of \cite{CrippaLopesFilhoMiotNussenzveigLopes16} to more general singular fields and sets. Theorem \ref{theorem - new avoidance} is also an avoidance property applying to more general singular sets. We remark that it can be used to retrieve easily the  avoidance property established in \cite{CrippaLopesFilhoMiotNussenzveigLopes16}. Indeed, coming back to the proof of Theorem \ref{theorem - new avoidance}, we get by virtue of \eqref{ineq:compar}
\begin{equation}
\delta\leq d_{S(\tau_\delta)}(x)\leq (K+1)\delta,\quad \forall x\in F(\delta).
\end{equation}We can assume that $\delta$ is sufficienly small so that $(K+1)\delta \leq \delta^{1/2}$. Hence
$$g(\delta)= \log(1/\delta)\leq 2  \log(1/d_{S(\tau_\delta)}(x))=g(d_{S(\tau_\delta)}(x))-g(d_{S_0}(x)).$$
Hence we may mimick the subsequent computations, replacing $d_S(t, X(t,x))$ by $d_{S(t)}(X(t,x))$ and observing the cancellation on $b(t,\cdot)\cdot \nabla d_{S(t)}$. It follows that $\mu_2(F(\delta))\to 0$ as $\delta\to 0$. Note that this argument would apply equally to a point vortex trajectory only H\"older continuous in time. 

We finally mention that this applies to a finite number of point vortex trajectories that do not collide on $[0,T]$: this follows from a straightforward adaptation of the previous arguments.

\section{Conclusion}
We have demonstrated that the renormalization theory of DiPerna \& Lions and Ambrosio can be extended to vector fields that are $BV$ off a set of singularities, provided that the anisotropic fractal detail of the set of singularities is known and the component of the vector field normal to these singularities is sufficiently small. We provide a way of calculating the necessary anisotropic detail for a singular set composed of trajectories. The renormalization theory then gives the existence and uniqueness of solutions to the transport equation \eqref{transport equation}, and the corresponding ordinary differential equation \eqref{ODE}. Further, the trajectories of the flow solution avoid the singular set, which we demonstrated by improving upon the avoidance results of Aizenman and Robinson \& Sharples. We retrieve known results in point vortex dynamics in the particular case where the singular set is given by the graphs of a finite number of point vortex Lipschitz trajectories.

\section{Acknowledgments}

E. M. is partially supported by the french Agence Nationale de la Recherche through the following projects: SINGFLOWS (grant ANR-18-CE40-0027-01), and INFAMIE (grant ANR-15-CE40-01).

\bibliographystyle{plain}
\bibliography{biblio.bib}
@article {Aizenman78a,
    AUTHOR = {Aizenman, Michael},
     TITLE = {A sufficient condition for the avoidance of sets by measure
              preserving flows in {${\bf R}^{n}$}},
   JOURNAL = {Duke Math. J.},
  FJOURNAL = {Duke Mathematical Journal},
    VOLUME = {45},
      YEAR = {1978},
    NUMBER = {4},
     PAGES = {809--813},
      ISSN = {0012-7094},
   MRCLASS = {58F25 (34A15 58F12)},
  MRNUMBER = {518107},
MRREVIEWER = {D. Newton},
       URL = {http://projecteuclid.org/euclid.dmj/1077313100},
}

@article {Ambrosio04,
    AUTHOR = {Ambrosio, Luigi},
     TITLE = {Transport equation and {C}auchy problem for {$BV$} vector
              fields},
   JOURNAL = {Invent. Math.},
  FJOURNAL = {Inventiones Mathematicae},
    VOLUME = {158},
      YEAR = {2004},
    NUMBER = {2},
     PAGES = {227--260},
      ISSN = {0020-9910},
   MRCLASS = {35K15 (34A12 35L65)},
  MRNUMBER = {2096794},
MRREVIEWER = {J. W. Jerome},
       DOI = {10.1007/s00222-004-0367-2},
       URL = {https://doi.org/10.1007/s00222-004-0367-2},
}

@incollection {Ambrosio08,
    AUTHOR = {Ambrosio, Luigi},
     TITLE = {Transport equation and {C}auchy problem for non-smooth vector
              fields},
 BOOKTITLE = {Calculus of variations and nonlinear partial differential
              equations},
    SERIES = {Lecture Notes in Math.},
    VOLUME = {1927},
     PAGES = {1--41},
 PUBLISHER = {Springer, Berlin},
      YEAR = {2008},
   MRCLASS = {35F10 (34A12 35B30 76B03)},
  MRNUMBER = {2408257},
MRREVIEWER = {Sergey Nikolaevich Alekseenko},
       DOI = {10.1007/978-3-540-75914-0\_1},
       URL = {https://doi.org/10.1007/978-3-540-75914-0_1},
}

@article {BoccardoMurat82,
    AUTHOR = {Boccardo, Lucio and Murat, Fran\c{c}ois},
     TITLE = {Remarques sur l'homog\'{e}n\'{e}isation de certains probl\`emes
              quasi-lin\'{e}aires},
   JOURNAL = {Portugal. Math.},
  FJOURNAL = {Portugaliae Mathematica},
    VOLUME = {41},
      YEAR = {1982},
    NUMBER = {1-4},
     PAGES = {535--562 (1984)},
      ISSN = {0032-5155},
   MRCLASS = {35B99 (35J60 35K55)},
  MRNUMBER = {766874},
MRREVIEWER = {Jan Soko\l owski},
}

@article {Bouchut01,
    AUTHOR = {Bouchut, Fran\c{c}ois},
     TITLE = {Renormalized solutions to the {V}lasov equation with
              coefficients of bounded variation},
   JOURNAL = {Arch. Ration. Mech. Anal.},
  FJOURNAL = {Archive for Rational Mechanics and Analysis},
    VOLUME = {157},
      YEAR = {2001},
    NUMBER = {1},
     PAGES = {75--90},
      ISSN = {0003-9527},
   MRCLASS = {35F05 (35D10 35F25)},
  MRNUMBER = {1822415},
MRREVIEWER = {R. Glassey},
       DOI = {10.1007/PL00004237},
       URL = {https://doi.org/10.1007/PL00004237},
}

@article {Caprino,
    AUTHOR = {Caprino, Silvia and Marchioro, Carlo and Miot, Evelyne and Pulvirenti, Mario},
     TITLE = {On the attractive plasma-charge system in 2-d},
   JOURNAL = {Comm. Partial Differential Equations},
  FJOURNAL = {Communications in Partial Differential Equations},
    VOLUME = {37},
      YEAR = {2012},
    NUMBER = {7},
     PAGES = {1237--1272},
      ISSN = {0360-5302},
   MRCLASS = {82D10 (35F20 35Q83)},
  MRNUMBER = {2942982},
MRREVIEWER = {Yuxi Zheng},
       DOI = {10.1080/03605302.2011.653032},
       URL = {https://doi.org/10.1080/03605302.2011.653032},
}

@article {CiprianoCruzeiro05,
    AUTHOR = {Cipriano, Fernanda and Cruzeiro, Ana Bela},
     TITLE = {Flows associated with irregular {$\mathbb{R}^d$}-vector fields},
   JOURNAL = {J. Differential Equations},
  FJOURNAL = {Journal of Differential Equations},
    VOLUME = {219},
      YEAR = {2005},
    NUMBER = {1},
     PAGES = {183--201},
      ISSN = {0022-0396},
   MRCLASS = {60H07 (37C10)},
  MRNUMBER = {2181034},
MRREVIEWER = {Shi Zan Fang},
       DOI = {10.1016/j.jde.2005.02.015},
       URL = {https://doi.org/10.1016/j.jde.2005.02.015},
}

@incollection {colombinilerner05,
    AUTHOR = {Colombini, Ferruccio and Lerner, Nicolas},
     TITLE = {Uniqueness of {$L^\infty$} solutions for a class of conormal
              {$BV$} vector fields},
 BOOKTITLE = {Geometric analysis of {PDE} and several complex variables},
    SERIES = {Contemp. Math.},
    VOLUME = {368},
     PAGES = {133--156},
 PUBLISHER = {Amer. Math. Soc., Providence, RI},
      YEAR = {2005},
   MRCLASS = {35F05 (26A45 34A12)},
  MRNUMBER = {2126467},
MRREVIEWER = {Toka Diagana},
       DOI = {10.1090/conm/368/06776},
       URL = {https://doi.org/10.1090/conm/368/06776},
}

@article {CrippaDeLellis08,
    AUTHOR = {Crippa, Gianluca and De Lellis, Camillo},
     TITLE = {Estimates and regularity results for the {D}i{P}erna-{L}ions
              flow},
   JOURNAL = {J. Reine Angew. Math.},
  FJOURNAL = {Journal f\"{u}r die Reine und Angewandte Mathematik. [Crelle's
              Journal]},
    VOLUME = {616},
      YEAR = {2008},
     PAGES = {15--46},
      ISSN = {0075-4102},
   MRCLASS = {34C11 (35L45 37C10)},
  MRNUMBER = {2369485},
MRREVIEWER = {Guido Schneider},
       DOI = {10.1515/CRELLE.2008.016},
       URL = {https://doi.org/10.1515/CRELLE.2008.016},
}

@article {Crippa08,
    AUTHOR = {Crippa, Gianluca},
     TITLE = {The ordinary differential equation with non-{L}ipschitz vector
              fields},
   JOURNAL = {Boll. Unione Mat. Ital. (9)},
  FJOURNAL = {Bollettino della Unione Matematica Italiana. Serie 9},
    VOLUME = {1},
      YEAR = {2008},
    NUMBER = {2},
     PAGES = {333--348},
      ISSN = {1972-6724},
   MRCLASS = {37C10 (34A36 34G20 46E35)},
  MRNUMBER = {2424297},
MRREVIEWER = {Guido Schneider},
}

@article {Ligabue,
    AUTHOR = {Crippa, Gianluca and Ligabue, Silvia and Saffirio, Chiara},
     TITLE = {Lagrangian solutions to the {V}lasov-{P}oisson system with a
              point charge},
   JOURNAL = {Kinet. Relat. Models},
  FJOURNAL = {Kinetic and Related Models},
    VOLUME = {11},
      YEAR = {2018},
    NUMBER = {6},
     PAGES = {1277--1299},
      ISSN = {1937-5093},
   MRCLASS = {35Q83 (58J45 82D10)},
  MRNUMBER = {3815144},
MRREVIEWER = {Jonathan Ben-Artzi},
       DOI = {10.3934/krm.2018050},
       URL = {https://doi.org/10.3934/krm.2018050},
}

@article {CrippaLopesFilhoMiotNussenzveigLopes16,
    AUTHOR = {Crippa, Gianluca and Lopes Filho, Milton C. and Miot, Evelyne
              and Nussenzveig Lopes, Helena J.},
     TITLE = {Flows of vector fields with point singularities and the
              vortex-wave system},
   JOURNAL = {Discrete Contin. Dyn. Syst.},
  FJOURNAL = {Discrete and Continuous Dynamical Systems. Series A},
    VOLUME = {36},
      YEAR = {2016},
    NUMBER = {5},
     PAGES = {2405--2417},
      ISSN = {1078-0947},
   MRCLASS = {35Q31 (76B47)},
  MRNUMBER = {3485403},
MRREVIEWER = {Franck Sueur},
       DOI = {10.3934/dcds.2016.36.2405},
       URL = {https://doi.org/10.3934/dcds.2016.36.2405},
}

@article {DeLellis08,
    AUTHOR = {De Lellis, Camillo},
     TITLE = {O{DE}s with {S}obolev coefficients: the {E}ulerian and the
              {L}agrangian approach},
   JOURNAL = {Discrete Contin. Dyn. Syst. Ser. S},
  FJOURNAL = {Discrete and Continuous Dynamical Systems. Series S},
    VOLUME = {1},
      YEAR = {2008},
    NUMBER = {3},
     PAGES = {405--426},
      ISSN = {1937-1632},
   MRCLASS = {34A12 (35B30 35F05 35F20 35L65 58D25)},
  MRNUMBER = {2425023},
MRREVIEWER = {Marta Lewicka},
       DOI = {10.3934/dcdss.2008.1.405},
       URL = {https://doi.org/10.3934/dcdss.2008.1.405},
}

@incollection {DeLellis08a,
    AUTHOR = {De Lellis, Camillo},
     TITLE = {Ordinary differential equations with rough coefficients and
              the renormalization theorem of {A}mbrosio [after {A}mbrosio,
              {D}i{P}erna, {L}ions]},
      NOTE = {S\'{e}minaire Bourbaki. Vol. 2006/2007},
   JOURNAL = {Ast\'{e}risque},
  FJOURNAL = {Ast\'{e}risque},
    NUMBER = {317},
      YEAR = {2008},
     PAGES = {Exp. No. 972, viii, 175--203},
      ISSN = {0303-1179},
      ISBN = {978-2-85629-253-2},
   MRCLASS = {35L03 (35L45)},
  MRNUMBER = {2487734},
MRREVIEWER = {Sergio Spagnolo},
}

@incollection {Depauw03,
    AUTHOR = {Depauw, Nicolas},
     TITLE = {Non-unicit\'{e} du transport par un champ de vecteurs presque
              {BV}},
 BOOKTITLE = {Seminaire: \'{E}quations aux {D}\'{e}riv\'{e}es {P}artielles, 2002--2003},
    SERIES = {S\'{e}min. \'{E}qu. D\'{e}riv. Partielles},
     PAGES = {Exp. No. XIX, 9},
 PUBLISHER = {\'{E}cole Polytech., Palaiseau},
      YEAR = {2003},
   MRCLASS = {35F20 (35L60)},
  MRNUMBER = {2030714},
MRREVIEWER = {Thierry Goudon},
}

@article {DiPernaLions89a,
    AUTHOR = {DiPerna, Ronald J. and Lions, Pierre-Louis},
     TITLE = {Ordinary differential equations, transport theory and
              {S}obolev spaces},
   JOURNAL = {Invent. Math.},
  FJOURNAL = {Inventiones Mathematicae},
    VOLUME = {98},
      YEAR = {1989},
    NUMBER = {3},
     PAGES = {511--547},
      ISSN = {0020-9910},
   MRCLASS = {34A10 (34D20 35Q20 58D25 82A70)},
  MRNUMBER = {1022305},
MRREVIEWER = {B. G. Pachpatte},
       DOI = {10.1007/BF01393835},
       URL = {https://doi.org/10.1007/BF01393835},
}

@article {CapuzzoDolcettaPerthame96,
    AUTHOR = {Capuzzo Dolcetta, Italo and Perthame, Beno\^it},
     TITLE = {On some analogy between different approaches to first order
              {PDE}'s with nonsmooth coefficients},
   JOURNAL = {Adv. Math. Sci. Appl.},
  FJOURNAL = {Advances in Mathematical Sciences and Applications},
    VOLUME = {6},
      YEAR = {1996},
    NUMBER = {2},
     PAGES = {689--703},
      ISSN = {1343-4373},
   MRCLASS = {35F10 (35D05 35R05 49L25)},
  MRNUMBER = {1411988},
MRREVIEWER = {Sergey Nikolaevich Alekseenko},
}

@book {EvansGariepy92,
    AUTHOR = {Evans, Lawrence C. and Gariepy, Ronald F.},
     TITLE = {Measure theory and fine properties of functions},
    SERIES = {Studies in Advanced Mathematics},
 PUBLISHER = {CRC Press, Boca Raton, FL},
      YEAR = {1992},
     PAGES = {viii+268},
      ISBN = {0-8493-7157-0},
   MRCLASS = {28-02 (26-02 26Bxx 46E35)},
  MRNUMBER = {1158660},
MRREVIEWER = {R. G. Bartle},
}

@book {BkFalconer03,
    AUTHOR = {Falconer, Kenneth},
     TITLE = {Fractal geometry},
   EDITION = {Second},
      NOTE = {Mathematical foundations and applications},
 PUBLISHER = {John Wiley \& Sons, Inc., Hoboken, NJ},
      YEAR = {2003},
     PAGES = {xxviii+337},
      ISBN = {0-470-84861-8},
   MRCLASS = {28-01 (00A69 11K55 28A75 28A78 28A80 37C45 37F10)},
  MRNUMBER = {2118797},
MRREVIEWER = {Esa J\"{a}rvenp\"{a}\"{a}},
       DOI = {10.1002/0470013850},
       URL = {https://doi.org/10.1002/0470013850},
}

@article {Fefferman,
    AUTHOR = {Fefferman, Charles L. and Pooley, Benjamin C. and Rodrigo,
              Jos\'{e} L.},
     TITLE = {Non-conservation of dimension in divergence-free solutions of
              passive and active scalar systems},
   JOURNAL = {Arch. Ration. Mech. Anal.},
  FJOURNAL = {Archive for Rational Mechanics and Analysis},
    VOLUME = {242},
      YEAR = {2021},
    NUMBER = {3},
     PAGES = {1445--1478},
      ISSN = {0003-9527},
   MRCLASS = {35Q49 (76B03)},
  MRNUMBER = {4334730},
       DOI = {10.1007/s00205-021-01708-6},
       URL = {https://doi.org/10.1007/s00205-021-01708-6},
}

@article {LeoniMorini07,
    AUTHOR = {Leoni, Giovanni and Morini, Massimiliano},
     TITLE = {Necessary and sufficient conditions for the chain rule in
              {$W^{1,1}_{\rm loc}(\mathbb{R}^N;\mathbb{R}^d)$} and {${\rm BV}_{\rm
              loc}(\mathbb{R}^N;\mathbb{R}^d)$}},
   JOURNAL = {J. Eur. Math. Soc. (JEMS)},
  FJOURNAL = {Journal of the European Mathematical Society (JEMS)},
    VOLUME = {9},
      YEAR = {2007},
    NUMBER = {2},
     PAGES = {219--252},
      ISSN = {1435-9855},
   MRCLASS = {46E35 (26B05 26B30 26B40 28A75)},
  MRNUMBER = {2293955},
MRREVIEWER = {Stanislav Hencl},
       DOI = {10.4171/JEMS/78},
       URL = {https://doi.org/10.4171/JEMS/78},
}

@article {Lions98,
    AUTHOR = {Lions, Pierre-Louis},
     TITLE = {Sur les \'{e}quations diff\'{e}rentielles ordinaires et les \'{e}quations
              de transport},
   JOURNAL = {C. R. Acad. Sci. Paris S\'{e}r. I Math.},
  FJOURNAL = {Comptes Rendus de l'Acad\'{e}mie des Sciences. S\'{e}rie I.
              Math\'{e}matique},
    VOLUME = {326},
      YEAR = {1998},
    NUMBER = {7},
     PAGES = {833--838},
      ISSN = {0764-4442},
   MRCLASS = {34A12 (34C35 35Q99)},
  MRNUMBER = {1648524},
MRREVIEWER = {Beno\^{\i}t P. Desjardins},
       DOI = {10.1016/S0764-4442(98)80022-0},
       URL = {https://doi.org/10.1016/S0764-4442(98)80022-0},
}

@incollection {MarPul1,
    AUTHOR = {Marchioro, Carlo and Pulvirenti, Mario},
     TITLE = {On the vortex-wave system},
 BOOKTITLE = {Mechanics, analysis and geometry: 200 years after {L}agrange},
    SERIES = {North-Holland Delta Ser.},
     PAGES = {79--95},
 PUBLISHER = {North-Holland, Amsterdam},
      YEAR = {1991},
   MRCLASS = {35Q30 (76C05)},
  MRNUMBER = {1098512},
MRREVIEWER = {Andro Mikeli\'{c}},
}

@book {MarPul,
    AUTHOR = {Marchioro, Carlo and Pulvirenti, Mario},
     TITLE = {Mathematical theory of incompressible nonviscous fluids},
    SERIES = {Applied Mathematical Sciences},
    VOLUME = {96},
 PUBLISHER = {Springer-Verlag, New York},
      YEAR = {1994},
     PAGES = {xii+283},
      ISBN = {0-387-94044-8},
   MRCLASS = {76-02 (35Q35 76Cxx 76E99 76F99 76M25)},
  MRNUMBER = {1245492},
MRREVIEWER = {J. Thomas Beale},
       DOI = {10.1007/978-1-4612-4284-0},
       URL = {https://doi.org/10.1007/978-1-4612-4284-0},
}

@article {MarcusMizel72,
    AUTHOR = {Marcus, Michael B. and Mizel, Victor J.},
     TITLE = {Absolute continuity on tracks and mappings of {S}obolev
              spaces},
   JOURNAL = {Arch. Rational Mech. Anal.},
  FJOURNAL = {Archive for Rational Mechanics and Analysis},
    VOLUME = {45},
      YEAR = {1972},
     PAGES = {294--320},
      ISSN = {0003-9527},
   MRCLASS = {46E35},
  MRNUMBER = {338765},
MRREVIEWER = {A. Kufner},
       DOI = {10.1007/BF00251378},
       URL = {https://doi.org/10.1007/BF00251378},
}

@article {Nguyen,
    AUTHOR = {Nguyen, Quoc-Hung},
     TITLE = {Quantitative estimates for regular {L}agrangian flows with
              {$BV$} vector fields},
   JOURNAL = {Comm. Pure Appl. Math.},
  FJOURNAL = {Communications on Pure and Applied Mathematics},
    VOLUME = {74},
      YEAR = {2021},
    NUMBER = {6},
     PAGES = {1129--1192},
      ISSN = {0010-3640},
   MRCLASS = {34G20 (37J06 42B20)},
  MRNUMBER = {4242824},
       DOI = {10.1002/cpa.21992},
       URL = {https://doi.org/10.1002/cpa.21992},
}

@book {BkRobinson01,
    AUTHOR = {Robinson, James C.},
     TITLE = {Infinite-dimensional dynamical systems},
    SERIES = {Cambridge Texts in Applied Mathematics},
      NOTE = {An introduction to dissipative parabolic PDEs and the theory
              of global attractors},
 PUBLISHER = {Cambridge University Press, Cambridge},
      YEAR = {2001},
     PAGES = {xviii+461},
      ISBN = {0-521-63204-8},
   MRCLASS = {37-02 (34G10 34L30 35B41 35K57 35Q30 37Lxx)},
  MRNUMBER = {1881888},
MRREVIEWER = {Ricardo M. S. Rosa},
       DOI = {10.1007/978-94-010-0732-0},
       URL = {https://doi.org/10.1007/978-94-010-0732-0},
}

@article {RobinsonSadowski09a,
    AUTHOR = {Robinson, James C. and Sadowski, Witold},
     TITLE = {A criterion for uniqueness of {L}agrangian trajectories for
              weak solutions of the 3{D} {N}avier-{S}tokes equations},
   JOURNAL = {Comm. Math. Phys.},
  FJOURNAL = {Communications in Mathematical Physics},
    VOLUME = {290},
      YEAR = {2009},
    NUMBER = {1},
     PAGES = {15--22},
      ISSN = {0010-3616},
   MRCLASS = {35Q30 (76D05)},
  MRNUMBER = {2520506},
       DOI = {10.1007/s00220-009-0819-z},
       URL = {https://doi.org/10.1007/s00220-009-0819-z},
}

@article {RobinsonSadowski09,
    AUTHOR = {Robinson, James C. and Sadowski, Witold},
     TITLE = {Almost-everywhere uniqueness of {L}agrangian trajectories for
              suitable weak solutions of the three-dimensional
              {N}avier-{S}tokes equations},
   JOURNAL = {Nonlinearity},
  FJOURNAL = {Nonlinearity},
    VOLUME = {22},
      YEAR = {2009},
    NUMBER = {9},
     PAGES = {2093--2099},
      ISSN = {0951-7715},
   MRCLASS = {35Q30 (76D05)},
  MRNUMBER = {2534294},
       DOI = {10.1088/0951-7715/22/9/002},
       URL = {https://doi.org/10.1088/0951-7715/22/9/002},
}

@Article{RobinsonSadowskiSharples12,
  Title                    = {{On the regularity of Lagrangian trajectories corresponding to suitable weak solutions of the Navier-Stokes equations}},
  Author                   = {Robinson, James C. and Sadowski, Witold and Sharples, Nicholas},
  Journal                  = {{Procedia IUTAM}},
  Year                     = {{2012}}
}

@article {RobinsonSharples13JDE,
    AUTHOR = {Robinson, James C. and Sharples, Nicholas},
     TITLE = {Dimension prints and the avoidance of sets for flow solutions
              of non-autonomous ordinary differential equations},
   JOURNAL = {J. Differential Equations},
  FJOURNAL = {Journal of Differential Equations},
    VOLUME = {254},
      YEAR = {2013},
    NUMBER = {10},
     PAGES = {4144--4167},
      ISSN = {0022-0396},
   MRCLASS = {34A26 (28A80)},
  MRNUMBER = {3032300},
MRREVIEWER = {Roland Rabanal},
       DOI = {10.1016/j.jde.2013.02.012},
       URL = {https://doi.org/10.1016/j.jde.2013.02.012},
}

@article {RobinsonSharples13RAEX,
    AUTHOR = {Robinson, James C. and Sharples, Nicholas},
     TITLE = {Strict inequality in the box-counting dimension product
              formulas},
   JOURNAL = {Real Anal. Exchange},
  FJOURNAL = {Real Analysis Exchange},
    VOLUME = {38},
      YEAR = {2012/13},
    NUMBER = {1},
     PAGES = {95--119},
      ISSN = {0147-1937},
   MRCLASS = {28A78 (28A80)},
  MRNUMBER = {3083200},
MRREVIEWER = {Darko \v{Z}ubrini\'{c}},
       URL = {http://projecteuclid.org/euclid.rae/1367265642},
}

@article {Rogers88,
    AUTHOR = {Rogers, C. Ambrose},
     TITLE = {Dimension prints},
   JOURNAL = {Mathematika},
  FJOURNAL = {Mathematika. A Journal of Pure and Applied Mathematics},
    VOLUME = {35},
      YEAR = {1988},
    NUMBER = {1},
     PAGES = {1--27},
      ISSN = {0025-5793},
   MRCLASS = {28A75},
  MRNUMBER = {962731},
MRREVIEWER = {K. J. Falconer},
       DOI = {10.1112/S0025579300006239},
       URL = {https://doi.org/10.1112/S0025579300006239},
}

@unpublished{SerrinUnpublished,
  Title						= {{Unpublished}},
  Author					= {{Serrin, J}}
}

@article {SerrinVarberg69,
    AUTHOR = {Serrin, James and Varberg, Dale E.},
     TITLE = {A general chain rule for derivatives and the change of
              variables formula for the {L}ebesgue integral},
   JOURNAL = {Amer. Math. Monthly},
  FJOURNAL = {American Mathematical Monthly},
    VOLUME = {76},
      YEAR = {1969},
     PAGES = {514--520},
      ISSN = {0002-9890},
   MRCLASS = {26.46},
  MRNUMBER = {247011},
MRREVIEWER = {R. S. Booth},
       DOI = {10.2307/2316959},
       URL = {https://doi.org/10.2307/2316959},
}

@article {Stampacchia65,
    AUTHOR = {Stampacchia, Guido},
     TITLE = {Le probl\`eme de {D}irichlet pour les \'{e}quations elliptiques du
              second ordre \`a coefficients discontinus},
   JOURNAL = {Ann. Inst. Fourier (Grenoble)},
  FJOURNAL = {Universit\'{e} de Grenoble. Annales de l'Institut Fourier},
    VOLUME = {15},
      YEAR = {1965},
    NUMBER = {fasc. 1},
     PAGES = {189--258},
      ISSN = {0373-0956},
   MRCLASS = {35.45},
  MRNUMBER = {192177},
MRREVIEWER = {E. Magenes},
       URL = {http://www.numdam.org/item?id=AIF_1965__15_1_189_0},
}

@article {starovoitov-existence,
    AUTHOR = {Starovo\u{\i}tov, V. N.},
     TITLE = {Solvability of a problem on the motion of concentrated
              vortices in an ideal fluid},
   JOURNAL = {Dinamika Sploshn. Sredy},
  FJOURNAL = {Institut Gidrodinamiki Sibirskogo Otdeleniya Akademii Nauk
              SSSR. Dinamika Sploshno\u{\i} Sredy},
    NUMBER = {85},
      YEAR = {1988},
     PAGES = {118--136, 165},
      ISSN = {0420-0497},
   MRCLASS = {76C05 (35Q10)},
  MRNUMBER = {1003448},
}

@article {starovoitov-unique,
    AUTHOR = {Starovo\u{\i}tov, V. N.},
     TITLE = {Uniqueness of the solution to the problem of the motion of a
              point vortex},
   JOURNAL = {Sibirsk. Mat. Zh.},
  FJOURNAL = {Rossi\u{\i}skaya Akademiya Nauk. Sibirskoe Otdelenie. Institut
              Matematiki im. S. L. Soboleva. Sibirski\u{\i} Matematicheski\u{\i}
              Zhurnal},
    VOLUME = {35},
      YEAR = {1994},
    NUMBER = {3},
     PAGES = {696--701, v},
      ISSN = {0037-4474},
   MRCLASS = {76C05 (35Q30)},
  MRNUMBER = {1292230},
       DOI = {10.1007/BF02104828},
       URL = {https://doi.org/10.1007/BF02104828},
}

\end{document}